\theoremstyle{plain}
\newtheorem{prop}[]{Proposition}
\newtheorem{lem}[prop]{Lemma}
\newtheorem{thm}[]{Theorem}
\newtheorem{cor}[prop]{Corollary}
\theoremstyle{definition}
\newtheorem{dfn}{Definition}
\theoremstyle{remark}
\newtheorem{rmk}[]{Remark}
\newtheorem{example}[]{Example}
\newcommand{\ensemble}[1]{\left\{ #1 \right\}}
\newcommand{\suchthat}{\:|\:}
\newcommand{\norm}[1]{\left\| #1 \right\|}
\newcommand{\absolute}[1]{\left| #1 \right|}
\newcommand{\pairing}[2]{\langle #1, #2 \rangle}
\newcommand{\cptops}{\mathcal{K}}
\newcommand{\schwfuncs}{\mathcal{S}}
\newcommand{\N}{\mathbb{N}}
\newcommand{\Z}{\mathbb{Z}}
\newcommand{\C}{\mathbb{C}}
\newcommand{\R}{\mathbb{R}}
\newcommand{\T}{\mathbb{T}}
\newcommand{\fin}{\text{fin}}
\newcommand{\vol}{\text{vol}}
\newcommand{\smooth}[1]{\mathcal{#1}}
\DeclareMathOperator{\Tr}{Tr}
\DeclareMathOperator{\Ad}{Ad}
\DeclareMathOperator{\End}{End}
\DeclareMathOperator{\dom}{dom}
\DeclareMathOperator{\ch}{ch}
\DeclareMathOperator{\rk}{rk}
\DeclareMathOperator{\ev}{ev}
\DeclareMathOperator{\Id}{Id}
\DeclareMathOperator{\ptensor}{\hat{\otimes}}
\DeclareMathOperator{\mintensor}{\otimes_{\min}}
\DeclareMathOperator{\htensor}{\bar{\otimes}}
\DeclareMathOperator{\HP}{HP}
\DeclareMathOperator{\HC}{HC}
\DeclareMathOperator{\KK}{KK}
\title{Connes-Landi Deformation of Spectral Triples}
\author{Makoto Yamashita}
\address{Graduate School of Mathematical Sciences, the University of Tokyo\\
3-8-1 Komaba Meguro-ku, Tokyo, JAPAN}
\email{makotoy@ms.u-tokyo.ac.jp}
\keywords{noncommutative geometry, spectral triple, K-theory}
\subjclass[2000]{58B34;46L87}
\begin{document}
\begin{abstract}
 We describe a way to deform spectral triples with a 2-torus action and a real deformation parameter, motivated by deformation of manifolds after Connes-Landi.  Such deformations are shown to have naturally isomorphic $K$-theoretic invariants independent of the deformation parameter.
\end{abstract}

\maketitle
\section{Introduction}
Let $M$ be a compact smooth Riemannian manifold endowed with a smooth action of $2$-torus $\T^2 = (\R/\Z)^2$.  Connes and Landi \cite{MR1846904} defined an isospectral deformation $M_\theta$ of $M$ for a deformation parameter $\theta \in \R/\Z$.  The ``smooth function algebra'' $C^\infty(M_\theta)$ of $M_\theta$ is, as a linear space given by the smooth function algebra $C^\infty(M)$ of $M$, but endowed with a deformed product
\[
f *_\theta g = e^{\pi i \theta (m n' - m' n)} f g
\]
when $f$ is an $\T^2$-eigenvector of weight $(m, n) \in \Z^2$ in $C^\infty(M)$ and $g$ is a one of weight $(m', n')$.  In the case where $M = \T^2$ and the action is the translation, one obtains the noncommutative torus $\T^2_\theta$ whose function algebra is generated by the two unitaries $u$ and $v$ subject to the relation $u v = e^{2\pi i \theta}v u$.  They also showed that Dirac type elliptic operator over $M$ determined by the metric structure on $M$ such as the signature operator or the spin Dirac operator continue to make sense over $M_\theta$ as unbounded self adjoint operators of compact resolvent over Hilbert spaces on which $C^\infty(M_\theta)$ acts.  Li \cite{MR2134342} showed that the spectral triple defined this way qualifies as a ``compact quantum metric space'' of Rieffel \cite{MR1647515}.

This construction can be easily generalized to a spectral triple $(\smooth{A}, H, D)$ over a noncommutative algebra endowed with a smooth action of $\T^2$ when the ``Dirac operotor'' $D$ is equivariant with respect to this action.  In such a case obtain a new spectral triple  $(\smooth{A}_\theta, H, D)$  over the deformed algebra exactly as in the smooth Riemannian manifold case.  The $C^*$-algebraic closure of $\smooth{A}_\theta$ is a particular case of the deformation considered in \cite{MR1237992}, where such a deformation is shown to have the same $K$-group as the original algebra.

One question which arises after such deformation is to compute the pairing of the Chern-Connes character of thus obtained spectral triple with the $K$-group of the deformed algebra.  In the case of noncommutative torus $\T^2_\theta$, Pimsner and Voiculescu showed that it has the same $K$-theory as the ordinary $2$-torus in \cite{MR587369}, then Connes \cite{MR572645} \cite{MR823176} made the computations of the periodic cyclic cohomology group and the pairing with the $K_0$-group in terms of the connections of projective modules as classified by Rieffel \cite{MR941652}.

In this paper we show that 1) Connes-Landi deformation gives isomorphic $\HP$-group as the original algebra, compatible with the $K$-theory isomorphism (Corollary \ref{prop:per-cyclic-isom}), 2) deformation of a spectral triple give the pairing of the same image (Theorem \ref{thm:cl-deform-pairing-invar}), somewhat generalizing the above computations for $\T^2_\theta$ to the general $M_\theta$.  In the course of the proof of the latter invariance of Chern-Connes characters we describe the image of the invariant cyclic cocycles under the former isomorphism.  In doing so we obtain a simple description of the phenomenon like $\pairing{\tau^{(\theta)}}{K_0 C(\T^2_\theta)} = \Z + \theta \Z$ for the gauge invariant tracial state $\tau^{(\theta)}$ on $C(\T^2_\theta)$ which is sensitive to the change of deformation parameter as opposed to Chern-Connes character of equivariant spectral triples.

\section{Preliminaries}

In this section we give a basic definition of Connes-Landi deformation of spectral triples with a $2$-torus action and related constructions.  Throughout this paper we consider regular spectral triple (\cite{MR1789831}, \cite{MR1995874}) with an additional assumption on the smoothness of the torus action.

Let $(\smooth{A}, H, D)$ be an even spectral triple (in other words, a $K$-cycle over $\smooth{A}$); thus, $H = H^0 \oplus H^1$ is a graded Hilbert space, $\smooth{A}$ is a $*$-subalgebra of the algebra of the bounded even operators $B(H^0) \oplus B(H^1)$ on $H$ and $D$ is an odd unbounded self adjoint on $H$ which has bounded commutator with the elements $\smooth{A}$: $[D, a]$ is bounded and $a(1+D^2)^{-1/2}$ is compact for any $a \in \smooth{A}$.

Let $\delta$ denote the densely defined closed derivation $T \mapsto [\absolute{D}, T]$ on $B(H)$.  Recall that $(\smooth{A}, H, D)$ is said to be regular when $\smooth{A} + [D, \smooth{A}] \subset \cap_{k = 1}^\infty \dom \delta^k$.  Let $A$ denote the operator norm closure of $\smooth{A}$ in $B(H)$.  The completion $\smooth{A}_\delta$ of $\smooth{A}$, with respect to the seminorms $\norm{\delta^k(-)}$ and $\norm{\delta^k([D, -])}$ for $k \in \N = \ensemble{0, 1, \ldots}$, is stable under holomorphic functional calculus inside $A$ (\cite{MR1995874}, Proposition 16).
Let $\sigma$ be an action of $\T^2$ on $\smooth{A}$ by $*$-automorphisms.  In the following we assume that it is strongly continuous with respect to the Fr\'echet topology on $\smooth{A}$, i.e. for any $a \in \smooth{A}$ the map $t \mapsto \sigma_t(a)$ is continuous with respect to the seminorms on $\smooth{A}$ mentioned above.  Now, suppose that $\sigma_t$ is spatially implemented on $H$ by a strongly continuous even unitary representation $U_t\colon \T^2 \rightarrow \mathcal{U}(H^0) \times \mathcal{U}(H^1)$ on $H$ satisfying
\begin{align}\label{eq:cov-rep-cl}
\sigma_t(a) &= \Ad_{U_t}(a), &\Ad_{U_t}(D) &= D
\end{align}
 for $t \in \T^2$ and $a \in \smooth{A}$.  This condition implies that $\sigma$ is isometric with respect to the seminorms on $\smooth{A}$ mentioned above:
\begin{align}\label{eq:unif-bound-action}
\norm{\delta^k{\sigma_t(a)}} &= \norm{\delta^k(a)},& \norm{\delta^k([D, \sigma_t(a)])} &= \norm{\delta^k([D, a])}.
\end{align}

Put $U^{(1)}_t = U_{(t, 0)}$ and $U^{(2)}_t = U_{(0, t)}$.  For $i = 1, 2$, let $h_i$ denote the generator
\[
h_i \xi = \lim_{t \rightarrow 0} \frac{U^{(1)}_t \xi - \xi}{t}
\]
and put $\sigma^{(i)}_t = \Ad_{U^{(i)}_t} |_{\smooth{A}}$.  In addition, we write $[h, a]^{(\alpha)}$ for the iterated derivation
\[
\underbrace{[h_1, \cdots , [h_1}_{\alpha_1 \times}, \underbrace{[h_2, \cdots, [h_2}_{\alpha_2 \times}, a] \cdots ] ]\cdots ].
\]
Since the derivations $[h_i, -]$ are closable and have dense domains on $A$, $\sigma$ extends to a strongly continuous action of $\T^2$ on $A$ by Theorems 1.4.9 and 1.5.4 of  \cite{MR871870}.

Let $\smooth{A}^{\infty}$ be the subalgebra of $\smooth{A}_\delta$ consisting of the elements $a$ such that the map $t \mapsto \sigma_t(a)$ admit arbitrary order of derivatives in $\smooth{A}_\delta$.  The algebra $\smooth{A}^\infty$ is stable under holomorphic functional calculus inside $A$.  On the other hand, by the strong continuity of $\sigma$ and (\ref{eq:unif-bound-action}), the element
\[
\sigma_f(a) = \int_{\T^2} f(t) \sigma_t(a) d t
\]
is contained in $\smooth{A}_\delta$ for any smooth function $f$ on $\T^2$ and any $a \in \smooth{A}_\delta$.  Thus $\smooth{A}^\infty$ is a dense subalgebra of $A$ which contains $\smooth{A}$ and is closed under the holomorphic functional calculus.  Moreover it can be characterized as the joint domain of the following densely defined seminorms on $A$;
\begin{equation}\label{eq:smooth-seminorms}
\nu_{k, \alpha}(a) = \norm{\delta^k([h, a]^{(\alpha)})} + \norm{\delta^k([D, [h, a]^{(\alpha)}])}
\end{equation}
for $k \in \N$ and $\alpha = (\alpha_1, \alpha_2) \in \N^2$.

\begin{rmk}
 When we define the algebra $\smooth{A}^\infty$, the higher derivatives of the functions $t \mapsto \sigma_t(a)$ for $a \in \smooth{A}^\infty$ are only required to have their derivatives of arbitrary order in $\smooth{A}_\delta$ with respect to the operator norm topology.  Then for any $a \smooth{A}^\infty$ there exist derivatives of $\sigma_t(a)$ with respect to the seminorms $\norm{\delta^k(a)} + \norm{\delta^k([D, a])}$ for $k \in \N$.  Indeed, for $i = 1, 2$, we know that $\partial_1 \sigma_t(a)$ exists with respect to the operator norm, equals $[h_i, \sigma_t(a)]$, and it is an element of $\smooth{A}_\delta$.  For any $k$ the operator $\delta^k([h_i,  \sigma_t(a)])$ is bounded and equals $[h_i, \sigma_t(\delta^k(a))]$.  Now, one has
 \[
\delta^k( \sigma_t(a)) = \delta^k(a) + \int_0^{t_1} \delta^k([h_1, \sigma^{(1)}_r(a)]) d r + \int_0^{t_2} \delta^k([h_2, \sigma^{(2)}_s \sigma^{(1)}_{t_1}(a)]) d s,
\]
 which shows $\partial_2 \delta^k( \sigma_t(a)) = \delta^k([h_2, \sigma_t(a)])$.  This shows that the map $\sigma_t(a)$ is differentiable by $\partial_2$ with respet to the seminorm $\norm{\delta^k(a)}$ and that the partial derivative is $[h_2, \sigma_t(a)]$.  With an analogous argument one has $\partial_1 \sigma_t(a) = [h_1, \sigma_t(a)]$ with respect to this seminorm.  Next, by induction on the order $\absolute{\alpha}$ of differentiation, one obtains that the higher order derivatives of $\sigma_t(a)$ exists with respect to the seminorm $\norm{\delta^k(a)}$ and agrees with the ones with respect to the operator norm case.  The case for the seminorms $\norm{\delta^k([D, a])}$ fro $k \in \N$ is similar.
\end{rmk}

By the stability under the holomorphic functional calculus, the change of algebras from $\smooth{A}_\delta$ to $\smooth{A}^\infty$ does not affect the $K_0$-group (which is isomorphic to $K_0(A)$) and they have the same $K$-cycle given by $H$ and $D$.   In the rest of the paper we assume that $\smooth{A} = \smooth{A}^\infty$.  

 The Hilbert spaces $H^i$ for $i = 0, 1$ decomposes into a direct sum of eigenspaces $H^i_{(m, n)}$ of weights $\hat{\T}^2 \simeq \Z^2$ characterized by
 \[
 \xi \in H^i_{(m, n)} \Leftrightarrow U_t \xi = e^{2 \pi i (m t_1 + n t_2)}\xi
 \]
for $t = (t_1, t_2) \in \T^2$.  Similarly, for any bounded operator $T$ on $H$, put
\begin{equation}\label{eq:t-mn-component-integral}
T_{(m, n)} = \int_{\T^2} e^{- 2 \pi i (m t_1 + n t_2)} \Ad_{U_t}(T) d\mu(t)
\end{equation}
where $\mu$ is the normalized Haar measure on $\T^2$.  Then $T_{(m, n)}$ satisfies $\Ad_{U_t}(T_{(m, n)}) = e^{2\pi i (m t_1 + n t_2)}T_{(m, n)}$ and $T$ can be expressed as a sum $\sum_{(m, n)} T_{(m, n)}$ which converges in the strong operator topology. 
 
Let $B(H)_\fin$ denote the subspace of $B(H)$ consisting of the operators $T$ where $T_{(m, n)} = 0$ except for finitely many $(m, n)$.  Put $\smooth{A}_\fin = \smooth{A} \cap B(H)_\fin$.

\begin{dfn}
  Let $\theta$ be an arbitrary real number.   Given a bounded operator $T$ on $H$ which is an eigenvector of weight $(m, n) \in \Z^2$ for the action $\Ad_{U_t}$ of $\T^2$, we define a new bounded operator $T^{(\theta)}$ on $H$ by
\[
 T^{(\theta)} \xi = e^{\pi i \theta (m n' - m' n)} T \xi
\]
for $\xi \in H_{(m', n')}$.  We extend this to the operators in $B(H)_\fin$ by putting $T^{(\theta)} = \sum T_{(m, n)}^{(\theta)}$.
\end{dfn}

Note that we have $(T^{(\theta)})^* = (T^*)^{(\theta)}$.  When $T$ is homogeneous of weight $(m, n)$ and $S$ is a one of weight $(m', n')$, we have $T^{(\theta)} S^{(\theta)} = e^{\pi i (m n' - m' n) \theta} (TS)^{(\theta)}$.  

\begin{rmk}
 We adopted a presentation of $T^{(\theta)}$ which is slightly different from the one given in \cite{MR1846904}.  Let $V$ be the unitary operator on $H$ characterized by $V \xi = e^{\pi i \theta m' n'} \xi$ for $\xi \in H_{(m', n')}$, and $\phi$ be the linear transformation of $B(H)_\fin$ characterized by $\phi(T) = e^{- \pi i \theta m n} T$ when $T \in B(H)_{(m, n)}$.  Then we have
 \[
 V \phi(T)^{(\theta)} V^* \xi = e^{2 \pi i \theta m n'} T \xi,
 \]
 which agrees with the deformation given by Connes and Landi.
\end{rmk}

\begin{lem}\label{lem:part-sum-conv}
Let $T$ be a bounded operator on $H$ which admits derivatives of the map $t \mapsto \Ad_{U_t}(T)$ up to the fourth degree in $B(H)$ in the weak operator topology.  Then the sum $\sum_{(m, n) \in \Z^2} T_{(m, n)}^{(\theta)}$ is absolutely convergent to a bounded operator $T^{(\theta)}$ in the operator norm topology.
\end{lem}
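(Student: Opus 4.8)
The plan is to reduce the statement to the elementary fact that a $C^{4}$-function on a torus has an absolutely summable Fourier series, applied to the $B(H)$-valued function $t \mapsto \Ad_{U_t}(T)$.

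First I would note that the twist $T_{(m,n)} \mapsto T_{(m,n)}^{(\theta)}$ is isometric. For a fixed weight $(m,n) \in \Z^2$, the scalars $e^{\pi i \theta (mn' - m'n)}$, as $(m',n')$ runs over $\Z^2$, are the block-diagonal ``eigenvalues'' of a unitary $W_{(m,n)}$ on $H = \bigoplus_{(m',n')} H_{(m',n')}$ acting on $H_{(m',n')}$ by that phase, and by the defining formula $T_{(m,n)}^{(\theta)} = T_{(m,n)} W_{(m,n)}$. Hence $\norm{T_{(m,n)}^{(\theta)}} = \norm{T_{(m,n)}}$, so it suffices to prove $\sum_{(m,n) \in \Z^2} \norm{T_{(m,n)}} < \infty$; absolute norm-convergence of $\sum_{(m,n)} T_{(m,n)}^{(\theta)}$ to a bounded operator, which one then calls $T^{(\theta)}$ (agreeing with the earlier definition when $T \in B(H)_\fin$), is automatic.

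Next I would estimate $\norm{T_{(m,n)}}$ by integrating by parts in (\ref{eq:t-mn-component-integral}). Since the derivatives of $t \mapsto \Ad_{U_t}(T)$ up to order four are only assumed to exist in the weak operator topology, I would do the integration by parts at the level of matrix coefficients: for unit vectors $\xi, \eta \in H$ the scalar function $t \mapsto \langle \Ad_{U_t}(T)\xi, \eta \rangle$ is $C^4$ with $\partial^\alpha$-derivative $\langle \partial^\alpha \Ad_{U_t}(T)\xi, \eta \rangle$, and each family $\{\partial^\alpha \Ad_{U_t}(T)\}_{t \in \T^2}$ with $\absolute{\alpha} \le 4$ is weakly continuous over the compact group $\T^2$, hence norm-bounded by some constant $C$ by the uniform boundedness principle. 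Integrating by parts twice in $t_1$ and twice in $t_2$ (all boundary terms vanishing by periodicity) gives, for $m, n \ne 0$,
\[
\langle T_{(m,n)}\xi, \eta \rangle = \frac{1}{(2\pi i m)^2 (2\pi i n)^2} \int_{\T^2} e^{-2\pi i (m t_1 + n t_2)} \langle \partial_1^2 \partial_2^2 \Ad_{U_t}(T)\xi, \eta \rangle \, d\mu(t),
\]
so that $\norm{T_{(m,n)}} \le C (2\pi)^{-4} m^{-2} n^{-2}$ after taking the supremum over $\xi, \eta$ of norm $\le 1$; integrating by parts only in the single nontrivial coordinate handles the axes, $\norm{T_{(0,n)}} \le C (2\pi)^{-2} n^{-2}$ and $\norm{T_{(m,0)}} \le C (2\pi)^{-2} m^{-2}$ for $n, m \ne 0$, while $\norm{T_{(0,0)}} \le \norm{T}$. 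Since $\sum_{n \ne 0} n^{-2} < \infty$, summing these bounds over $\Z^2$ yields $\sum_{(m,n)} \norm{T_{(m,n)}} < \infty$, which finishes the proof.

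I expect the only genuinely delicate point to be the one just flagged: differentiability is available only in the weak operator topology, so the calculus must be performed on scalar matrix coefficients and the operator-norm bound recovered by a supremum over unit vectors, the requisite uniform bounds on the weak derivatives being supplied for free by the uniform boundedness principle. Everything else --- the isometry of the twist, the periodic integration by parts, and the convergence of $\sum m^{-2} n^{-2}$ --- is routine.
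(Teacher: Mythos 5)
Your proof is correct and follows essentially the same route as the paper: both establish $\norm{T_{(m,n)}} = O(\absolute{(m,n)}^{-4})$ from the fourth-order weak differentiability via the standard relation between derivatives of $t \mapsto \Ad_{U_t}(T)$ and multiplication of $T_{(m,n)}$ by $2\pi i m$ (the paper verifies this on matrix coefficients between eigenvectors and uses the single operator $(\partial_1^2+\partial_2^2)^2$ to get $(m^2+n^2)^{-2}$ decay, whereas you integrate by parts with $\partial_1^2\partial_2^2$ and treat the axes separately), then conclude by summability and the isometry $\norm{T_{(m,n)}^{(\theta)}}=\norm{T_{(m,n)}}$. The only stylistic difference is your appeal to uniform boundedness for the norm bound on the derivatives, which the paper gets for free from the covariance $\partial^\alpha\Ad_{U_t}(T)=\Ad_{U_t}\bigl(\partial^\alpha\Ad_{U_s}(T)|_{s=0}\bigr)$ and the contractivity of $T \mapsto T_{(m,n)}$.
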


\begin{proof}
For $i = 1, 2$, let $\partial_i$ denote the partial differentiation in the direction of $t_i$ of functions $F(t_1, t_2)$ defined on $\T^2$.  Let $(m, n)$ be any element of $\Z^2$. We claim that the bounded operator $S = \left . \partial_1 \Ad_{U_t}(T)\right |_{t=(0,0)}$ satisfies $S_{m, n} = m T_{m, n}$.  Indeed, when $\xi \in H_{m', n'}$ and $\eta \in H_{m'', n''}$, we have
\[
\frac{\pairing{\Ad_{U^{(1)}_s}(T)\xi}{\eta} - \pairing{T \xi}{\eta}}{s} = \frac{e^{2 \pi i (m'' - m') s} - 1}{s} \pairing{T \xi}{\eta}.
\]
The left hand side converges to $\pairing{S \xi}{\eta}$ as $s \rightarrow 0$, while the right hand side converges to $2 \pi i (m'' - m') \pairing{T \xi}{\eta}$.  Hence we have that
\[
e^{-2 \pi i (m t_1 + n t_2)} \pairing{\Ad{U_{t}}(S) \xi}{\eta} = 2 \pi i (m'' - m') e^{-2 \pi i (\tilde{m} t_1 + \tilde{n} t_2)}  \pairing{T \xi}{\eta},
\]
where one puts $\tilde{m} = -m - m' + m''$ and $\tilde{n} = -n - n' + n''$.  This leads to 
\[
\pairing{S_{m, n} \xi}{\eta} =
\begin{cases}
2 \pi i (m'' - m') \pairing{T \xi}{\eta} & (m = m'' - m', n = n'' - n')\\
0 & (\text{otherwise}).
\end{cases}
\]
This agrees with the value of $2 \pi i m \pairing{T_{m, n} \xi}{\eta}$.  Hence the bounded operators $S_{m, n}$ and $2 \pi i m T_{m, n}$ agree on the linear spans of the $H_{m', n'}$ for $(m', n')$ in $H$.  Since this subspace is dense, we have established the claim.

Iterating the above argument, we obtain 
\[
(\left . (\partial_1^2 + \partial_2^2)^2 \Ad_{U_t}(T)\right |_{t=(0,0)})_{m, n} = 16 \pi^4 (m^2 + n^2)^2 T_{(m, n)}
\]
for any $(m, n) \in \Z^2$.  Since the correspondence $T \mapsto T_{(m, n)}$ is a contraction, we have
\[
\norm{T_{(m, n)}} \le \frac{1}{16 \pi^4(m^2 + n^2)}\norm{\left . (\partial_1^2 + \partial_2^2)^2 \Ad_{U_t}(T)\right |_{t=(0,0)}}.
\]
 for $(m, n) \in \Z^2 \setminus \ensemble{(0, 0)}$ Now, the assertion of Lemma follows from the fact that $(m^2 + n^2)^{-2}$ is summable on $\Z^2$ and $\norm{T_{(m, n)}^{(\theta)}} = \norm{T_{(m, n)}}$.
\end{proof}

\begin{lem}
 The subspace $\smooth{A}_\theta \subset B(H)$ of the operators $a^{(\theta)}$ for $a \in \smooth{A}$ is closed under multiplication.
 \end{lem}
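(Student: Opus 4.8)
The plan is to show that for $a, b \in \smooth{A}$ there is a single element $c \in \smooth{A}$ with $c^{(\theta)} = a^{(\theta)} b^{(\theta)}$; concretely $c$ will be the ``$\theta$-twisted convolution'' of the $\T^2$-homogeneous components of $a$ and $b$, namely $c = \sum_{(p,q) \in \Z^2} c_{(p,q)}$ with
\[
c_{(p,q)} = \sum_{(m,n) \in \Z^2} e^{\pi i \theta (mq - pn)}\, a_{(m,n)}\, b_{(p-m,\, q-n)} .
\]
Since $a \mapsto a^{(\theta)}$ is linear and injective (it has inverse $a^{(\theta)} \mapsto (a^{(\theta)})^{(-\theta)}$), $\smooth{A}_\theta$ is automatically a linear subspace, so only closure under multiplication requires proof.

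I would first record that the Fourier projection $T \mapsto T_{(m,n)}$ of (\ref{eq:t-mn-component-integral}) behaves well with respect to the smooth structure: because $\Ad_{U_t}$ fixes $D$ and $\absolute{D}$ and commutes with each $h_i$, it commutes with $\delta$, with $[D,-]$ and with each $[h_i,-]$, and being an operator-norm contraction it is contractive for every seminorm $\nu_{k,\alpha}$ of (\ref{eq:smooth-seminorms}). Iterating the computation of Lemma~\ref{lem:part-sum-conv}, with $\Delta = [h_1,[h_1,-]] + [h_2,[h_2,-]]$ one has $(\Delta^j a)_{(m,n)} = (-4\pi^2(m^2+n^2))^j\, a_{(m,n)}$, whence for $a \in \smooth{A} = \smooth{A}^\infty$ and any $j$
\[
\nu_{k,\alpha}(a_{(m,n)}) \le \bigl(4\pi^2 (m^2+n^2)\bigr)^{-j}\, \nu_{k,\alpha}(\Delta^j a) \qquad ((m,n)\ne(0,0)),
\]
so each seminorm of $a_{(m,n)}$ decays faster than any power of $(m^2+n^2)^{-1}$. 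Since $\delta$, $[D,-]$ and the $[h_i,-]$ are pairwise commuting derivations, the Leibniz rule yields estimates $\nu_{k,\alpha}(xy) \le \sum \nu_{k',\alpha'}(x)\nu_{k'',\alpha''}(y)$ with combinatorial coefficients, making $\smooth{A}$ a Fr\'echet algebra; together with the rapid decay this shows that each series defining $c_{(p,q)}$ converges in $\smooth{A}$ to a homogeneous element of weight $(p,q)$ and that $\nu_{k,\alpha}(c_{(p,q)})$ is again rapidly decaying in $(p,q)$. As $\smooth{A} = \smooth{A}^\infty$ is complete for the $\nu_{k,\alpha}$, the sum $c = \sum_{(p,q)} c_{(p,q)}$ converges in $\smooth{A}$ and has $c_{(p,q)}$ as its $(p,q)$-Fourier component.

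To finish, I would identify the product. By Lemma~\ref{lem:part-sum-conv} the series $\sum_{(m,n)} \norm{a_{(m,n)}}$ and $\sum_{(m',n')} \norm{b_{(m',n')}}$ converge and $\norm{T_{(m,n)}^{(\theta)}} = \norm{T_{(m,n)}}$, so $\sum_{(m,n),(m',n')} a_{(m,n)}^{(\theta)} b_{(m',n')}^{(\theta)}$ converges absolutely in operator norm with sum $a^{(\theta)} b^{(\theta)}$. Using the homogeneous product rule $T^{(\theta)} S^{(\theta)} = e^{\pi i \theta (mn'-m'n)} (TS)^{(\theta)}$ and regrouping by $(p,q) = (m+m',\, n+n')$, for which $mn' - m'n = mq - pn$, this double sum equals $\sum_{(p,q)} c_{(p,q)}^{(\theta)}$. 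Finally, applying Lemma~\ref{lem:part-sum-conv} to $c \in \smooth{A} = \smooth{A}^\infty$ (whose $(p,q)$-component is $c_{(p,q)}$) gives $\sum_{(p,q)} c_{(p,q)}^{(\theta)} = c^{(\theta)}$ in operator norm, so $a^{(\theta)} b^{(\theta)} = c^{(\theta)} \in \smooth{A}_\theta$.

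The main obstacle --- and the only place where the full force of $\smooth{A} = \smooth{A}^\infty$ enters --- is the second step: one must control the twisted convolution not merely in operator norm (where Lemma~\ref{lem:part-sum-conv} alone already gives membership in the $C^*$-closure $A$) but in every Fr\'echet seminorm $\nu_{k,\alpha}$, so that $c$ lands in $\smooth{A}$ itself. This rests on the commutation of the Fourier projection with $\delta$, $[D,-]$ and the $[h_i,-]$, on the ``Laplacian'' decay estimate above, and on the completeness of $\smooth{A}$ for the $\nu_{k,\alpha}$.
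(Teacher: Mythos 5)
Your proof is correct and follows essentially the same route as the paper's: both rest on the Leibniz rule for the seminorms $\nu_{k,\alpha}$, the decay of Fourier components extracted from the Laplacian argument of Lemma \ref{lem:part-sum-conv}, and the characterization of $\smooth{A}=\smooth{A}^\infty$ as the joint domain of the (closed) seminorms. The only difference is bookkeeping: the paper proves absolute convergence of the double series $\sum e^{\pi i\theta(mn'-m'n)}a_{m,n}b_{m',n'}$ in each seminorm directly, whereas you first regroup it into homogeneous components $c_{(p,q)}$ and verify rapid decay of their seminorms before summing.
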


\begin{proof}
 Let $a$ and $b$ be arbitrary elements of $\smooth{A}$.  We than have
 \begin{equation}\label{eq:prod-deform-series}
  a^{(\theta)} b^{(\theta)} = \left ( \sum_{(m, n), (m', n') \in \Z^2} e^{\pi i \theta (m n' - m' n)} a_{m, n} b_{m', n'} \right )^{(\theta)}.
 \end{equation}
In order to show that the series in the the right hand side defines an element of $\smooth{A} = \smooth{A}^\infty$, it is enough to show that it is uniformly convergent with respect to any of the seminorms $\nu_{k, \alpha}$ for $k \in \N$ and $\alpha \in \N^2$.

By induction on $k$ and $\absolute{\alpha}$, we know that there exist numbers $C^{k, \alpha}_{l, \beta}$ indexed by $k, l \in \N$ and $\alpha, \beta \in \N^2$ satisfying
\[
\delta^k([h, a'b']^{(\alpha)}) = \sum_{l \le k, \beta \le \alpha} C^{k, \alpha}_{l, \beta} \delta^l([h, a']^{(\beta)}) \delta^{k - l}([h, b']^{(\alpha - \beta)})
\]
for any elements $a', b' \in \smooth{A}$.  Let us fix $k \in \N$ and $\alpha \in \N^2$ now.  Then, for any $l \le k$ and $\beta \le \alpha$, the functions $\sigma_t(\delta^l([h, a]^{(\beta)}))$ and $\sigma_t(\delta^l([h, b]^{(\beta)}))$ admit bounded derivatives in $t \in \T^2$ of order $4$.  Hence we have the absolute convergence of
\[
\sum_{(m, n) \in \Z^2} \delta^l([h, a_{m, n}]^{(\beta)})
\]
by Lemma \ref{lem:part-sum-conv}.  Hence the infinite series
\[
\sum_{(m, n), (m', n') \in \Z^2} \delta^l([h, a_{m, n}]^{(\beta)}) \delta^{k - l}([h, b_{m', n'}]^{(\alpha - \beta)})
\]
is also absolutely convergent, which implies the convergence of
\[
\sum_{(m, n), (m', n') \in \Z^2} e^{\pi i \theta (m n' - m' n)} \delta^l([h, a_{m, n}]^{(\beta)}) \delta^{k - l}([h, b_{m', n'}]^{(\alpha - \beta)}).
\]
Combining this for all the indices $l \le k$ and $\beta \le \alpha$, one obtains the convergence of the right hand side of \eqref{eq:prod-deform-series} with respect to the seminorm $\delta^k([h, a']^{(\alpha)})$ for $a' \in \smooth{A}$.  With a similar argument we also have the convergence of that series with respect to the seminorm $\delta^k([D, [h, a']^{(\alpha)}])$, hence also with respect to $\nu_{k, \alpha}$. This proves the assertion of Lemma.
\end{proof}

\begin{dfn}\label{dfn:cl-deformations}
 Let $\smooth{A}, H, D, \sigma$ and $U$ be as above and $\theta$ be an arbitrary real number.  The algebra of the operators $a^{(\theta)}$ for $a \in \smooth{A}$ is called the \textit{Connes-Landi deformation} $\smooth{A}_\theta$ of $\smooth{A}$.  The operator norm closure of $\smooth{A}_\theta$ inside $B(H)$ is called the Connes-Landi deformation $A_\theta$ of $A$. \end{dfn}

\begin{rmk}
 Let $\sigma$ be an action of $\R^d$ on a $C^*$-algebra $A$ and $J$  a skew symmetric matrix of size $d$.  Rieffel \cite{MR1184061} defined a deformed product
\begin{equation}\label{eq:rieffel-deform-prod}
 a \times_J b = \int \sigma_{J u}(a) \sigma_v(b) d u d v 
\end{equation}
on the $\sigma$-smooth part $A^\infty$ of $A$ by means of oscillatory integral.  In our setting, the action $\sigma$ of the $2$-torus induces an action of $\R^2$. For the $2 \times 2$ skew symmetric matrix $J$, consider the following matrix (cf. loc. cit. Example 10.2)
 \begin{equation*}
 J = \left(\begin{array}{cc}0 & - \frac{\theta}{2} \\ \frac{\theta}{2} &0 \end{array}\right).
 \end{equation*}
Then the deformed $C^*$-algebra $(\overline{A^\infty}, \times_J)$, which is obtained as the $C^*$-algebraic closure of $(A^\infty, \times_J)$, is isomorphic to $A_\theta$.  Hence the presentation of the latter on $H$ as in Definition \ref{dfn:cl-deformations} gives a representation of $(\overline{A^\infty}, \times_J)$ on the graded Hilbert space $H$ as even operators.
 \end{rmk}

Note that the equation (\ref{eq:rieffel-deform-prod}) defines a product on $\smooth{A}$ under our assumption.  The correspondence $a \mapsto a^{(\theta)}$ gives a representation of $(\smooth{A}, \times_J)$ on $H$.  In the remaining of the section we show that $\smooth{A}_\theta$ satisfies the conditions assumed for $\smooth{A}$.

\begin{lem}\label{lem:fin-supp-regular}
 For any $a \in \smooth{A}$ and any $\theta \in \R$, the operators $\delta^k([h, a^{(\theta)}]^{(\alpha)})$ and $\delta^k([D, [h, a^{(\theta)}]^{(\alpha)}])$ are bounded for $k \in \N$ and any $\alpha \in \N^2$.
\end{lem}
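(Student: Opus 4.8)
The plan is to reduce the statement to the case of a bounded operator homogeneous for the $\T^2$-action — where $(-)^{(\theta)}$ is simply left multiplication by a value of $U$ — and then to recover the general case by summing over weights with the help of Lemma \ref{lem:part-sum-conv}.

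First, the homogeneous case. For $(m,n) \in \Z^2$ let $W_{(m,n)}$ denote the unitary on $H$ acting as the scalar $e^{\pi i \theta(m n' - m' n)}$ on each eigenspace $H_{(m',n')}$. Comparing with the definition of $U$ one finds $W_{(m,n)} = U_{(-\theta n/2,\,\theta m/2)}$, the argument being read modulo $\Z$; in particular $W_{(m,n)}$ commutes with every $U_t$, hence with $D$ and with $\absolute{D}$ by \eqref{eq:cov-rep-cl}, so $W_{(m,n)}$ lies in the kernels of the derivations $\delta$ and $[D,-]$ and preserves their domains. If $T$ is a bounded operator homogeneous of weight $(m,n)$, then $T^{(\theta)} = W_{(m,n)} T$, which is again homogeneous of weight $(m,n)$; consequently, whenever $T$ and $[D,T]$ lie in $\bigcap_k \dom \delta^k$, one has $\delta(T^{(\theta)}) = \delta(T)^{(\theta)}$ and $[D, T^{(\theta)}] = [D, T]^{(\theta)}$ — because $W_{(m,n)}$ commutes with $\absolute{D}$ and $D$ while $\delta(T)$ and $[D,T]$ stay homogeneous of weight $(m,n)$ — and $[h_i, T^{(\theta)}] = [h_i, T]^{(\theta)}$, both sides being $2\pi i m$, resp.\ $2\pi i n$, times $T^{(\theta)}$. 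Iterating these identities, and noting that every weight component $a_{(m,n)} = \sigma_f(a)$ (with $f(t) = e^{-2\pi i(m t_1 + n t_2)}$) of an element $a \in \smooth{A}$ again lies in $\smooth{A}^\infty = \smooth{A}$ since $T \mapsto T_{(m,n)}$ is a contraction commuting with $\delta$ and the $[h_i,-]$ and hence $\nu_{k,\alpha}(a_{(m,n)}) \le \nu_{k,\alpha}(a) < \infty$, we obtain for all $k \in \N$, $\alpha \in \N^2$
\[
\delta^k([h, a_{(m,n)}^{(\theta)}]^{(\alpha)}) = W_{(m,n)}\, \delta^k([h, a_{(m,n)}]^{(\alpha)}), \qquad \delta^k([D,[h, a_{(m,n)}^{(\theta)}]^{(\alpha)}]) = W_{(m,n)}\, \delta^k([D,[h, a_{(m,n)}]^{(\alpha)}]),
\]
all intermediate operators remaining homogeneous of weight $(m,n)$ and bounded.

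To reassemble, fix $k$ and $\alpha$ and set $b = \delta^k([h, a]^{(\alpha)})$ and $b' = \delta^k([D,[h,a]^{(\alpha)}])$, both bounded since $a \in \smooth{A}^\infty$. Because $\sigma_t$ commutes with $\delta$ and the $[h_i,-]$, the discussion in the Remark above shows that $t \mapsto \sigma_t(b)$ and $t \mapsto \sigma_t(b')$ are smooth in operator norm, with $\partial^\gamma \sigma_t(b) = \delta^k([h, \sigma_t(a)]^{(\alpha+\gamma)})$ and similarly for $b'$; in particular their derivatives up to order four are bounded, so Lemma \ref{lem:part-sum-conv} gives norm-convergence of $b^{(\theta)} = \sum_{(m,n)} b_{(m,n)}^{(\theta)}$ and $b'^{(\theta)} = \sum_{(m,n)} b'^{(\theta)}_{(m,n)}$. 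Since $b_{(m,n)} = \delta^k([h, a_{(m,n)}]^{(\alpha)})$ and $b'_{(m,n)} = \delta^k([D,[h, a_{(m,n)}]^{(\alpha)}])$, the homogeneous case identifies these summands with $\delta^k([h, a_{(m,n)}^{(\theta)}]^{(\alpha)})$ and $\delta^k([D,[h, a_{(m,n)}^{(\theta)}]^{(\alpha)}])$. Finally, the partial sums of $\sum_{(m,n)} a_{(m,n)}^{(\theta)}$ converge in operator norm to $a^{(\theta)}$ (Lemma \ref{lem:part-sum-conv} applied to $a$), while the corresponding partial sums of the two series above converge to $b^{(\theta)}$ and $b'^{(\theta)}$; since $\delta$, $[D,-]$ and the $[h_i,-]$ are closed, chaining closedness through the finitely many operations composing $\delta^k([h,-]^{(\alpha)})$ and $\delta^k([D,[h,-]^{(\alpha)}])$ shows that $a^{(\theta)}$ lies in their domains with $\delta^k([h, a^{(\theta)}]^{(\alpha)}) = b^{(\theta)}$ and $\delta^k([D,[h, a^{(\theta)}]^{(\alpha)}]) = b'^{(\theta)}$. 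In particular both are bounded, which is the assertion.

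I expect the only real obstacle to be this last interchange of the infinite weight-sum with the unbounded operations $\delta$, $[h_i,-]$ and $[D,-]$; it is handled by combining the norm-convergence of the \emph{deformed} series of derivatives (Lemma \ref{lem:part-sum-conv}) with the closedness of those derivations. The homogeneous reduction itself is essentially formal, once one observes that $(-)^{(\theta)}$ acts on an operator of weight $(m,n)$ as left multiplication by the value $U_{(-\theta n/2,\,\theta m/2)}$ of $U$, which therefore commutes with $D$ and $\absolute{D}$.
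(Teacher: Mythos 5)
Your proposal is correct and follows essentially the same route as the paper: apply Lemma \ref{lem:part-sum-conv} to $T=\delta^k([h,a]^{(\alpha)})$ (and its $[D,-]$ analogue), identify the weight components via $\delta^k([h,a]^{(\alpha)})_{(m,n)}^{(\theta)}=\delta^k([h,a_{(m,n)}^{(\theta)}]^{(\alpha)})$, and conclude by closedness of $T\mapsto\delta^k([h,T]^{(\alpha)})$. Your explicit justification of the homogeneous identity via the unitary $W_{(m,n)}=U_{(-\theta n/2,\,\theta m/2)}$ is a detail the paper leaves implicit, but it does not change the argument.
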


\begin{proof}
  Applying Lemma \ref{lem:part-sum-conv} to $T = \delta^k([h, a]^{(\alpha)})$, one has the absolute convergence of $\sum_{m, n} \delta^k([h, a]^{(\alpha)})_{(m, n)}^{(\theta)}$.  Since $\delta^k([h, a]^{(\alpha)})_{(m, n)}^{(\theta)} = \delta^k([h, a_{(m, n)}^{(\theta)}]^{(\alpha)})$ and $T \mapsto \delta^k([h, T]^{(\alpha)})$ is closed on $B(H)$, one has $a^{(\theta)} \in \dom \delta^k([h, -]^{(\alpha)})$ and $\delta^k([h, a^{(\theta)}]^{(\alpha)}) = \delta^k([h, a]^{(\alpha)})^{(\theta)}$.
 \end{proof}

\begin{lem}\label{lem:a-theta-d-inv-cpt}
 For any $a \in \smooth{A}$ and $\theta \in \R$, the operator $a^{(\theta)} (1 + D^2)^{-1/2}$ on $H$ is compact.
\end{lem}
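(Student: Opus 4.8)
The plan is to expand $a^{(\theta)}$ into its $\T^2$-homogeneous components, treat each component separately, and use the invariance $\Ad_{U_t}(D) = D$ to reduce compactness of $a_{(m,n)}^{(\theta)}(1+D^2)^{-1/2}$ to that of $a_{(m,n)}(1+D^2)^{-1/2}$, which in turn follows from the spectral triple axiom for $\smooth{A}$.

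First I would note that since $a \in \smooth{A} = \smooth{A}^\infty$, the function $t \mapsto \sigma_t(a)$ is smooth in the operator norm, so Lemma \ref{lem:part-sum-conv} applies with $T = a$ and yields the operator-norm absolutely convergent expansion $a^{(\theta)} = \sum_{(m,n) \in \Z^2} a_{(m,n)}^{(\theta)}$. Right multiplication by the bounded operator $(1+D^2)^{-1/2}$ is norm continuous, hence $a^{(\theta)}(1+D^2)^{-1/2} = \sum_{(m,n)} a_{(m,n)}^{(\theta)}(1+D^2)^{-1/2}$ still converges in operator norm. As the compact operators form a norm-closed subspace of $B(H)$, it then suffices to show that each summand $a_{(m,n)}^{(\theta)}(1+D^2)^{-1/2}$ is compact.

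Next, fix $(m,n)$ and let $W$ be the unitary acting on $H_{(m',n')}$ as the scalar $e^{\pi i \theta (mn' - m'n)}$. By the definition of the deformation we have $a_{(m,n)}^{(\theta)} = a_{(m,n)} W$. Since $\Ad_{U_t}(D) = D$, the unitaries $U_t$ commute with $D$, hence with $(1+D^2)^{-1/2}$ by the functional calculus; in particular $(1+D^2)^{-1/2}$ preserves every weight space $H_{(m',n')}$ and therefore commutes with $W$, which is scalar on each $H_{(m',n')}$. Thus
\[
a_{(m,n)}^{(\theta)}(1+D^2)^{-1/2} = a_{(m,n)}(1+D^2)^{-1/2} W ,
\]
and, $W$ being bounded, it remains only to prove that $a_{(m,n)}(1+D^2)^{-1/2}$ is compact. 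For this I would use $a_{(m,n)} = \int_{\T^2} e^{-2\pi i (mt_1 + nt_2)} \sigma_t(a)\, d\mu(t)$: strong continuity of $\sigma$ in the Fr\'echet, hence operator-norm, topology makes $t \mapsto e^{-2\pi i (mt_1+nt_2)} \sigma_t(a)(1+D^2)^{-1/2}$ norm continuous, so $a_{(m,n)}(1+D^2)^{-1/2}$ is an operator-norm limit of Riemann sums of the operators $\sigma_t(a)(1+D^2)^{-1/2}$; each of these is compact because $\sigma_t(a) \in \smooth{A}$ and $(\smooth{A}, H, D)$ is a spectral triple, whence the limit is compact as well.

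I do not expect a genuine obstacle. The two points that need a little care are the commutation $W (1+D^2)^{-1/2} = (1+D^2)^{-1/2} W$, which is exactly where the $D$-equivariance of the torus action is used, and the interchange of the norm-convergent component sum $\sum_{(m,n)}$ and of the integral over $\T^2$ with right multiplication by the fixed bounded operator $(1+D^2)^{-1/2}$.
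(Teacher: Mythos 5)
Your proof is correct and follows essentially the same route as the paper: decompose $a^{(\theta)}$ into homogeneous components, verify compactness of each $a_{(m,n)}^{(\theta)}(1+D^2)^{-1/2}$, and conclude by the norm convergence supplied by Lemma \ref{lem:part-sum-conv}. The only cosmetic difference is that the paper applies Lemma \ref{lem:part-sum-conv} directly to $T = a(1+D^2)^{-1/2}$ (using $[U_t, D]=0$) whereas you apply it to $a$ and then right-multiply by the bounded operator $(1+D^2)^{-1/2}$; your explicit justification of the compactness of each summand via the unitary $W$ and the averaging formula fills in details the paper leaves implicit.
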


\begin{proof}
The operator $a_{(m, n)} (1 + D^2)^{-1/2}$, and $a_{(m, n)}^{(\theta)}(1 + D^2)^{-1/2}$ which is similar to the former, are compact for any pair $(m, n) \in \Z^2$. Applying Lemma \ref{lem:part-sum-conv} to the operator $a (1 + D^2)^{-1/2}$, we know that
\[
a^{(\theta)} (1 + D^2)^{-1/2} = \lim_{N\rightarrow\infty} \sum_{\absolute{m}, \absolute{n} < N} a_{m, n}^{(\theta)}(1 + D^2)^{-1/2}
\]
is also compact.
\end{proof}

\begin{prop}
The triple $(\smooth{A}_\theta, H, D)$ is an even regular spectral triple.  The action $\Ad_{U_t}$ on $\smooth{A}_\theta$ by $\T^2$ is smooth and $\smooth{A}_\theta$ is complete with respect to the seminorms $\nu_{k, \alpha}$ for $k \in \N$ and $\alpha \in \N$.
\end{prop}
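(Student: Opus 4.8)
The plan is to assemble the statement from the lemmas just proved, the only genuinely new ingredient being a reinterpretation of the deformation $a \mapsto a^{(\theta)}$ as a topological isomorphism of Fr\'echet spaces.

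I would first dispose of the spectral-triple axioms. That $\smooth{A}_\theta$ is a $*$-algebra of bounded even operators is almost immediate: closure under multiplication is the Lemma proved above asserting exactly this, $*$-invariance follows from $(a^{(\theta)})^\ast = (a^\ast)^{(\theta)}$, and evenness holds because each homogeneous component $a_{(m,n)}$ is even (the representation $U$ being even) and the scalars defining $a_{(m,n)}^{(\theta)}$ act by the same value on $H^0_{(m',n')}$ and $H^1_{(m',n')}$.  The operator $D$ and the Hilbert space $H$ are untouched, so $D$ remains odd and self adjoint.  Boundedness of $[D, a^{(\theta)}]$ and, more generally, of $\delta^k(a^{(\theta)})$ and $\delta^k([D, a^{(\theta)}])$ for all $k$ is Lemma \ref{lem:fin-supp-regular} with $\alpha = 0$; this simultaneously gives the commutator condition and regularity, since it places $\smooth{A}_\theta + [D, \smooth{A}_\theta]$ inside $\bigcap_k \dom \delta^k$.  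Compactness of $a^{(\theta)}(1+D^2)^{-1/2}$ is Lemma \ref{lem:a-theta-d-inv-cpt}.  Thus $(\smooth{A}_\theta, H, D)$ is an even regular spectral triple.

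Next I would treat the smoothness of the action and completeness together by studying the map $\Phi_\theta \colon a \mapsto a^{(\theta)}$.  A short computation on homogeneous components gives $\Ad_{U_t}(a^{(\theta)}) = \sigma_t(a)^{(\theta)}$, because $a_{(m,n)}^{(\theta)}$ again has weight $(m,n)$; so the orbit map of $a^{(\theta)}$ factors as $\Phi_\theta$ composed with the orbit map $t \mapsto \sigma_t(a)$ of $a$ inside $\smooth{A}$.  To conclude I need that $\Phi_\theta$ is continuous from $\smooth{A}$ into $B(H)$ for the seminorms \eqref{eq:smooth-seminorms}, with image $\smooth{A}_\theta$.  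Here I would use the identity $\delta^k([h, a^{(\theta)}]^{(\alpha)}) = (\delta^k([h, a]^{(\alpha)}))^{(\theta)}$ (and its $[D, -]$-analogue) from the proof of Lemma \ref{lem:fin-supp-regular}, together with the norm bound in the proof of Lemma \ref{lem:part-sum-conv} applied to $T = \delta^k([h, a]^{(\alpha)})$: since $[h, T]^{(\gamma)} = \delta^k([h, a]^{(\alpha + \gamma)})$, this bounds $\norm{T^{(\theta)}}$ by a finite combination of the $\nu_{k, \alpha + \gamma}(a)$ with $\absolute{\gamma} \le 4$, so $\nu_{k, \alpha}(a^{(\theta)})$ is dominated by finitely many seminorms of $a$.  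Continuity of $\Phi_\theta$ in hand, the orbit map of $a^{(\theta)}$ is smooth with values in $\smooth{A}_\theta$ and with derivatives $\Phi_\theta([h, \sigma_t(a)]^{(\alpha)}) \in \smooth{A}_\theta$; since $\smooth{A}_\theta$ is dense in $A_\theta$ by construction, this shows the action $\Ad_{U_t}$ on $\smooth{A}_\theta$ is smooth and that $\smooth{A}_\theta = (\smooth{A}_\theta)^\infty$.

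For completeness I would observe that $\Phi_{-\theta}$ inverts $\Phi_\theta$ (the $(-\theta)$-deformation cancels the $\theta$-deformation on each weight space) and that $\Phi_{-\theta}$ is continuous by the same estimate with $-\theta$ in place of $\theta$; hence $\Phi_\theta$ is a linear homeomorphism from $\smooth{A}$ onto $\smooth{A}_\theta$.  Since $\smooth{A} = \smooth{A}^\infty$ is the joint domain of the closed operators implementing the seminorms $\nu_{k, \alpha}$ and is therefore complete, so is $\smooth{A}_\theta$.  The step I expect to require the most care is precisely this identification of $\Phi_\theta$ as a homeomorphism: it hinges on the deformation commuting with $\delta$, $[h_i, -]$ and $[D, -]$ on the relevant domains and on the summability estimate of Lemma \ref{lem:part-sum-conv}, whereas everything else is routine reuse of the earlier lemmas.
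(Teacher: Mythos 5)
Your proposal is correct and follows essentially the same route as the paper: the spectral-triple axioms and regularity are read off from Lemmas \ref{lem:fin-supp-regular} and \ref{lem:a-theta-d-inv-cpt}, and completeness rests on the estimate $\nu_{k,\alpha}(b^{(-\theta)}) \le C \max_{\absolute{\alpha'}\le 4}\nu_{k,\alpha+\alpha'}(b)$ from Lemma \ref{lem:part-sum-conv}, which is exactly the inequality the paper applies to differences $a_k^{(\theta)} - a_{k'}^{(\theta)}$ of a Cauchy sequence. Your packaging of this as ``$\Phi_{\pm\theta}$ are mutually inverse continuous maps, hence $\smooth{A}_\theta$ inherits completeness from $\smooth{A}$'' is just a cleaner phrasing of the same argument, not a different one.
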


\begin{proof}
The fact that $(\smooth{A}_\theta, H, D)$ is a regular spectral and the smoothness of $\Ad_{U_t}$ on $\smooth{A}_\theta$ follows from Lemmas \ref{lem:fin-supp-regular} and \ref{lem:a-theta-d-inv-cpt}.

 Let $(a_k^{(\theta)})_{k \in \N}$ be a Cauchy sequence with respect to the seminorms $\nu_{k, \alpha}$ in $\smooth{A}_\theta$, convergent to a bounded operator $S$ on $H$.   Then the map $t \mapsto \Ad_{U_t}(S)$ is smooth on $\T^2$.  Hence we have a bounded operator $S^{(-\theta)}$ on $H$.  It remains to show that $S^{(-\theta)} \in \smooth{A}_\delta$, since one would have $S = (S^{(-\theta)})^{(\theta)} \in \smooth{A}_\theta$ then.
 
 Let $m \in \N$ and $\alpha \in \N^2$.  As in the proof of Lemma \ref{lem:part-sum-conv}, there is a universal constant $C$ such that
 \[
 \norm{\delta^m([h, (a_k^{(\theta)} - a_{k'}^{(\theta)})^{(-\theta)}]^{(\alpha)})} \le C \norm{\Ad_{U_t}(\delta^m([h, a_k^{(\theta)} - a_{k'}^{(\theta)})]^{(\alpha)})}_{C^4(\T^2)}
 \]
  for any $k, k'$.  Thus we obtain
  \[
  \norm{\delta^m([h, a_k - a_{k'}]^{(\alpha)})} < C \max_{\absolute{\alpha'} \le 4} \nu_{m, \alpha + \alpha'}(a_k^{(\theta)} - a_{k'}^{(\theta)}).
  \]
  There is also a similar estimate for $\norm{\delta^m([D, a_k - a_{k'}])}$.  Hence the sequence $(a_k)_{k}$ in $\smooth{A}$ is a Cauchy sequence for the seminorms $\nu_{k, \alpha}$, which is convergent to $S^{(-\theta)}$.  This shows $S^{(-\theta)} \in \smooth{A}_\delta$.
\end{proof}

\begin{rmk}
 The newly obtained spectral triple $(\smooth{A}_\theta, H, D)$ again satisfies $\smooth{A}_\theta = \smooth{A}_\theta^\infty$.  Hence one can form $(\smooth{A}_\theta)_{\theta'}$ for yet another deformation parameter $\theta'$, which is identified to $\smooth{A}_{\theta + \theta'}$.  We also have $\smooth{A}_0 = \smooth{A}$.
\end{rmk}

\begin{example}
 Let $\smooth{A}$ be the smooth function algebra $C^\infty(\T^2)$ over the $2$-torus, $H = L^2(\T^2)^{\oplus 2}$ and
 \[
 \slashed{D} = \left[\begin{array}{cc}0 & i \partial_1 + \partial_2 \\i \partial_1 - \partial_2 & 0\end{array}\right].
 \]
Consider the action of $\T^2$ on $\smooth{A}$ given by the translation.  Then the spectral triple $(\smooth{A}, H, \slashed{D})$ is regular and satisfies $\smooth{A} = \smooth{A}^\infty$. Given a real parameter $\theta$, the corresponding regular Connes-Landi deformation $\smooth{A}_\theta$ is precisely the algebra of Laurent series $\sum_{(m, n) \in \Z^2} a_{(m, n)} u^m v^n$ of rapid decay coefficients over the two unitaries $u$ and $v$ satisfying $u v = e^{2\pi i \theta}v u$.
\end{example}

\section{$K$-theory of Connes-Landi deformation}

We keep the notations $\smooth{A}, A, \sigma, \smooth{A}_\theta$ and $A_\theta$ of the previous section.  We extend $\sigma$ to $A$.  As remarked in the previous section, as an abstract $C^*$-algebra $A_\theta$ is the deformation algebra studied by Rieffel in \cite{MR1184061}, \cite{MR1237992} and shown to have isomorphic $K$-group as $A$.  In this section we elaborate a similar crossed product construction in order to show that $\smooth{A}_\theta$ and $\smooth{A}$ have isomorphic periodic cyclic cohomology groups which is compatible with the isomorphism between the $K$-groups.

Note that there is an action $\gamma$ of $\T^2$ on $C(\T^2_\theta)$ called the gauge action, given by
\[
\gamma_t (u^a v^b) = e^{2\pi i (t_1 a + t_2 b)} u^a v^b.
\]
We obtain the diagonal product action $\sigma \otimes \gamma$ of $\T^2$ on the (minimal) tensor product $A \mintensor C(\T^2_\theta)$.

\begin{prop}\label{prop:cl-deform-isom-to-fixed-pt-diag}
 The algebra $A_\theta$ is isomorphic to the subalgebra of $A \mintensor C(\T^2_\theta)$ consisting of the fixed elements under the diagonal action $\sigma \otimes \gamma$ of $\T^2$.
\end{prop}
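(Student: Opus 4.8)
The plan is to realise both algebras as operators on the Hilbert space $H \otimes \ell^2(\Z^2)$ and to match them there.

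Write $w_{(m,n)} = e^{-\pi i \theta m n} u^m v^n \in C(\T^2_\theta)$ for the Weyl elements; they satisfy $w_{(m,n)}^* = w_{(-m,-n)}$, $w_{(m,n)} w_{(m',n')} = e^{\pi i \theta(m n' - m' n)} w_{(m+m',n+n')}$, and $\gamma_t(w_{(m,n)}) = e^{2\pi i(m t_1 + n t_2)} w_{(m,n)}$, and they span a dense subspace of $C(\T^2_\theta)$ whose $\gamma$-weight spaces are the lines $\C w_{(m,n)}$. Represent $C(\T^2_\theta)$ on $\ell^2(\Z^2)$ through its canonical tracial state $\tau$, which is faithful for every $\theta \in \R$ since $\tau = \tau \circ \int_{\T^2} \gamma_t\, d t$ and the averaging expectation is faithful. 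Normalising the standard basis so that $\xi_{(k,l)} = w_{(k,l)} \xi_{(0,0)}$, one has $w_{(m,n)} \xi_{(k,l)} = e^{\pi i \theta(m l - k n)} \xi_{(m+k,n+l)}$, the gauge action is $\gamma_t = \Ad_{V_t}$ with $V_t \xi_{(k,l)} = e^{2\pi i(k t_1 + l t_2)} \xi_{(k,l)}$, and the right translations $r_{(p,q)} \xi_{(k,l)} = e^{\pi i \theta(k q - p l)} \xi_{(k+p,l+q)}$ are unitaries lying in $C(\T^2_\theta)'$. Consequently $A \mintensor C(\T^2_\theta)$ acts faithfully on $H \otimes \ell^2(\Z^2)$, the diagonal action $\sigma \otimes \gamma$ is $\Ad_{U_t \otimes V_t}$, and $B := (A \mintensor C(\T^2_\theta))^{\sigma \otimes \gamma}$ is the subalgebra of $A \mintensor C(\T^2_\theta)$ commuting with every $U_t \otimes V_t$.

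Define $\Phi$ on the dense $*$-subalgebra $\smooth{A}_\theta$ by $\Phi(a^{(\theta)}) = \sum_{(m,n)} a_{(m,n)} \otimes w_{(-m,-n)}$, the sum being absolutely norm-convergent by Lemma \ref{lem:part-sum-conv} because $w_{(-m,-n)}$ is unitary. The Weyl relations together with \eqref{eq:prod-deform-series} show that $\Phi$ is a $*$-homomorphism, and $\Ad_{U_t \otimes V_t}(a_{(m,n)} \otimes w_{(-m,-n)}) = e^{2\pi i(m t_1 + n t_2)} a_{(m,n)} \otimes e^{-2\pi i(m t_1 + n t_2)} w_{(-m,-n)} = a_{(m,n)} \otimes w_{(-m,-n)}$, so $\Phi$ maps into $B$. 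The point of the proof is that $\Phi$ is isometric. Decompose $H \otimes \ell^2(\Z^2) = \bigoplus_{(p,q) \in \Z^2} K_{(p,q)}$ into the weight spaces of $t \mapsto U_t \otimes V_t$, so that $K_{(p,q)} = \overline{\bigoplus_{(k,l)} H_{(p-k,q-l)} \otimes \xi_{(k,l)}}$; every element of $B$ commutes with $U_t \otimes V_t$, hence preserves each $K_{(p,q)}$, so $\norm{\Phi(a^{(\theta)})} = \sup_{(p,q)} \norm{\Phi(a^{(\theta)})|_{K_{(p,q)}}}$. The unitary $1_H \otimes r_{(p,q)}$ commutes with $A \mintensor C(\T^2_\theta)$ and carries $K_{(0,0)}$ onto $K_{(p,q)}$, so all of these restrictions have the same norm, namely $\norm{\Phi(a^{(\theta)})|_{K_{(0,0)}}}$. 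Finally the unitary $\kappa \colon K_{(0,0)} \to H$ determined by $\eta \otimes \xi_{(-r,-s)} \mapsto \eta$ for $\eta \in H_{(r,s)}$ conjugates $\Phi(a^{(\theta)})|_{K_{(0,0)}}$ into $a^{(\theta)}$ — the one genuine computation, which uses $w_{(-m,-n)} \xi_{(-r,-s)} = e^{\pi i \theta(m s - r n)} \xi_{(-(m+r),-(n+s))}$ and the definition of $a^{(\theta)}$ — so $\norm{\Phi(a^{(\theta)})} = \norm{a^{(\theta)}}$.

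Thus $\Phi$ extends to an isometric $*$-monomorphism $\overline{\Phi} \colon A_\theta \hookrightarrow B$. For surjectivity, note that $\Phi(b^{(\theta)}) = b \otimes w_{(-m,-n)}$ for $b \in \smooth{A}$ homogeneous of weight $(m,n)$, and that such $b$ are dense in the spectral subspace $A_{(m,n)}$; hence the range of $\overline{\Phi}$ is dense in the $(\mathrm{id} \otimes \gamma)$-spectral subspace $\{b \otimes w_{(-m,-n)} : b \in A_{(m,n)}\}$ of $B$ for each $(m,n)$. Since $\mathrm{id} \otimes \gamma$ restricts to a strongly continuous action of $\T^2$ on $B$ with exactly these spectral subspaces, their linear span is dense in $B$ by Ces\`aro (Fej\'er) summation, so $\overline{\Phi}$ is onto. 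The main obstacle is precisely the isometry step: one has to know that the invariant subspace $K_{(0,0)}$ is norm-determining for $B$, and this is what the right regular representation of $C(\T^2_\theta)$ in the tracial GNS representation supplies, through the unitaries $1_H \otimes r_{(p,q)}$ that permute the $U \otimes V$-weight spaces transitively.
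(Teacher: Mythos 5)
Your proof is correct, and it diverges from the paper's at the one step where the two arguments genuinely differ. Both proofs use the same homomorphism on the dense subalgebra (your $a_{(m,n)} \otimes w_{(-m,-n)}$ is exactly the paper's $e^{-\pi i \theta m n} a_{(m,n)} \otimes u^{-m} v^{-n}$), both invoke Lemma \ref{lem:part-sum-conv} for absolute convergence, and both finish surjectivity with the same Fej\'er-kernel density argument for the span of the spectral subspaces. Where you part ways is in establishing that the map extends to $A_\theta$ and is injective there: the paper first extends $f$ as a \emph{contraction}, using the stability of $\smooth{A}_\theta$ under holomorphic functional calculus to compare spectral radii, and then proves injectivity separately by averaging a putative positive element of $\ker f$ over $\Ad_{U_t}$ and using that $f(x_{(0,0)}) = x_{(0,0)} \otimes 1 \neq 0$. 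You instead prove outright that $\Phi$ is \emph{isometric}, by representing $C(\T^2_\theta)$ on $\ell^2(\Z^2)$ via the tracial GNS construction, observing that a $\sigma \otimes \gamma$-fixed element is block-diagonal for the $U_t \otimes V_t$-weight decomposition, using the right-translation unitaries in the commutant to show all blocks are unitarily equivalent to the $(0,0)$-block, and then identifying that block with $(H, a^{(\theta)})$ by an explicit unitary $\kappa$ — the phase computation $e^{\pi i \theta(ms - rn)}$ matching the definition of $a^{(\theta)}$ checks out. Your route is longer to set up but more self-contained (it does not lean on the spectral invariance of $\smooth{A}_\theta$, only on faithfulness of the trace, which you correctly justify for all $\theta$ via the gauge-averaging expectation), it delivers well-definedness of the extension and injectivity in a single stroke, and it makes the isomorphism spatially explicit, which is in the spirit of the bimodule description the paper gives later in Remark \ref{rmk:corresp-cl-deform-cross-prod}. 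The paper's route is shorter because it recycles structural facts already established about $\smooth{A}_\theta$.
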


\begin{proof}
 When $a \in A$ is a $\T^2$-eigenvector of weight $(m, n)$, $a \otimes u^{-m}v^{-n}$ is in the fixed point algebra $(A \mintensor C(\T^2_\theta))^{\sigma \otimes \gamma}$.  The correspondence $a^{(\theta)} \mapsto e^{- \pi i m n \theta} a \otimes u^{-m}v^{-n}$ defines a multiplicative map $f$ from $\smooth{A}_\theta \cap B(H)_\fin$ into $(A \mintensor C(\T^2_\theta))^{\sigma \otimes \gamma}$.
 
 Let $a$ be an element of $\smooth{A}$.  By the estimate given by Lemma \ref{lem:part-sum-conv}, one has the absolute convergence of $\sum_{m, n} a_{m,n} u^{-m}v^{-n}$ inside $(A \mintensor C(\T^2_\theta))^{\sigma \otimes \gamma}$.  Hence $f$ extends to the subalgebra $\smooth{A}_\theta$ of $A_\theta$.  Since $\smooth{A}_\theta$ is stable under holomorphic functional calculus, the spectral radius of $a^{(\theta)}$ in $\smooth{A}$ is the same as that in $A_\theta$.  It means that $f$ is a contraction and extends to a $*$-homomorphism of $A_\theta$ into $(A \mintensor C(\T^2_\theta))^{\sigma \otimes \gamma}$.
 
 We first prove that $f$ is injective by contradiction. Suppose that there was a nonzero positive element  $x$ in the ideal $\ker f$ of $A_\theta$.  On one hand we have $f(x_{0, 0}) = x_{0, 0} \otimes 1 \neq 0$.  On the other hand, since $f$ is equivariant for the action $\Ad_{U_t}$ on $A_\theta$ and $\sigma \otimes 1$ on $(A \mintensor C(\T^2_\theta))^{\sigma \otimes \gamma}$.  Hence the ideal $\ker f$ is invariant under $\Ad_{U_t}$ and $x_{(0, 0)} \in \ker f$.  This is a contradiction, hence $f$ is injective.
 
    Next we prove the surjectivity of $f$. By construction its image contains the linear span $E$ of the elements of the form $a \otimes u^{-m}v^{-n}$ for where $(m, n) \in \Z^2$ and $a \in \smooth{A}$ is a $\T^2$-eigenvector of weight $(m, n)$.  Since any homomorphism between $C^*$-algebras has a closed image, it is enough to show that the closure of $E$ agrees with $(A \mintensor C(\T^2_\theta))^{\sigma \otimes \gamma}$.
    
   First, note that if a sequence $(a_k)_{k \in \N}$ in $\smooth{A}$ converges to a $\T^2$-eigenvector $a$ in $A$ of weight $(m, n) \in \Z^2$, so does the sequence $((a_k)_{m, n})_{k \in \N}$.  Hence for any $\T^2$-eigenvector $a \in A$ of weight $(m, n)$, the element $a \otimes u^{-m}v^{-n}$ lies in the closure of $E$.
   
   Next, given any element $x$ in $(A \mintensor C(\T^2_\theta))^{\sigma \otimes \gamma}$, eigenspace decomposition with respect to the action $\sigma \otimes 1$ gives us the $\T^2$-eigenvectors $(a_{m, n})_{(m, n) \in \Z^2}$ satisfying
   \[
   a_{m, n} \otimes u^{-m}v^{-n} = e^{2 \pi i (-m t_1 - n t_2)} \int_{\T^2} (\sigma_t \otimes 1)(x).
   \]
   Let $c^{(k)}_{m, n}$ be the sequence finitely supported coefficients defined by
   \[
    c^{(k)}_{m, n} = \frac{\absolute{ ([ m, k + m] \times [ n, k + n] )\cap( [0, k] \times [0, k] )\cap \Z^2}}{k^2}.
   \]
  By the Fej\'{e}r kernel argument as in \cite{MR2511867}, the sequence
   \[
   \sum_{(m, n) \in \Z^2} c^{(k)}_{m, n} a_{m, n} \otimes u^{-m}v^{-n},
   \]
   converges to $x$ in norm. This shows that $E$ is dense in $(A \mintensor C(\T^2_\theta))^{\sigma \otimes \gamma}$.
\end{proof}

For $i = 1, 2$, let $\widehat{\sigma^{(i)}}$ denote the automorphism of $\T^2 \ltimes_{\sigma} A$ which is dual to $\sigma^{(i)}$.  Let $L^1(\T^2, A; \sigma)$ denote the algebra of $A$-valued measurable integrable functions on $\T^2$ endowed with the $\sigma$-convolution product $(f * g)_t = \int f_s \sigma_{s}(g_{t-s}) d s$.  Then the crossed product $\Z \ltimes_{\hat{\sigma^{(2)}}} \T^2 \ltimes_{\sigma} A$ by $\hat{\sigma^{(2)}}$ can be described as the $C^*$-algebraic closure of the normed $*$-algebra of the Laurent polynomials $\sum_k f_k v^k$ with coefficients in $L^1(\T^2, A; \sigma)$, whose product is given by
\[
\sum f_m v^m * \sum g_n v^n = \sum f_m * \hat{\sigma}_2^m(g_n) v^{m+n}.
\]

The algebra $\Z \ltimes_{\hat{\sigma^{(2)}}} \T^2 \ltimes_{\sigma} A$ admits two automorphisms of interest, the first being
\[
\widehat{\sigma^{(1)}}(\sum f_k v^k) = \sum \widehat{\sigma^{(1)}}(f_k) v^k
\]
 and the second being
 \[
 \alpha_\theta(\sum f_k v^k) = \sum e^{2 \pi i k \theta} f_k v^k.
 \]
 Let $(\widehat{\sigma^{(1)}}, \alpha_\theta)$ denote an automorphism of $\Z \ltimes_{\hat{\sigma^{(2)}}} \T^2 \ltimes_{\sigma} A$ given by the composition of these two automorphisms.

\begin{lem}
 The algebra $A_\theta$ is strongly Morita equivalent to $\Z \ltimes_{\widehat{\sigma^{(1)}}, \alpha_\theta} \Z \ltimes_{\hat{\sigma^{(2)}}} \T^2 \ltimes_{\sigma} A$.
\end{lem}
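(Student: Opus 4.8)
The plan is to combine Proposition~\ref{prop:cl-deform-isom-to-fixed-pt-diag} with the imprimitivity theorem identifying the fixed point algebra of a \emph{saturated} compact group action with a full corner of the associated crossed product, and then to rearrange crossed products by a twisted form of Takai duality. First I would observe that the diagonal action $\sigma \otimes \gamma$ of $\T^2$ on $A \mintensor C(\T^2_\theta)$ is saturated: the gauge action $\gamma$ on $C(\T^2_\theta)$ already is, since its weight-$(m,n)$ spectral subspace contains the unitary $u^{-m}v^{-n}$, and a diagonal action is saturated whenever one of its tensor factors is. Hence the projection $p$ obtained by integrating the canonical unitaries of the crossed product over $\T^2$ is full in the multiplier algebra of $\T^2 \ltimes_{\sigma \otimes \gamma}(A \mintensor C(\T^2_\theta))$, and the corner $p(\T^2 \ltimes_{\sigma \otimes \gamma}(A \mintensor C(\T^2_\theta)))p$ is isomorphic to $(A \mintensor C(\T^2_\theta))^{\sigma \otimes \gamma}$. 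Together with Proposition~\ref{prop:cl-deform-isom-to-fixed-pt-diag} this gives a strong Morita equivalence
\[
A_\theta \;\cong\; (A \mintensor C(\T^2_\theta))^{\sigma \otimes \gamma} \;\sim\; \T^2 \ltimes_{\sigma \otimes \gamma}(A \mintensor C(\T^2_\theta)).
\]

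Next I would compute the right hand side. Realising $C(\T^2_\theta)$ as the twisted group $C^*$-algebra $C^*(\Z^2, \omega_\theta)$ for the Heisenberg cocycle $\omega_\theta$, with the gauge action $\gamma$ being the dual action of $\widehat{\Z^2} \cong \T^2$, the algebra $A \mintensor C(\T^2_\theta)$ becomes a twisted crossed product of $A$ by $\Z^2$ acting trivially, on which $\sigma \otimes \gamma$ acts as $\sigma$ on the coefficient algebra and as the dual action on the $\Z^2$-variable. Since the coefficient action $\sigma$ commutes with this cocycle action of $\Z^2$, one may interchange the $\T^2$- and $\Z^2$-crossed products, and Takai duality for twisted crossed products then collapses $\T^2 \ltimes \Z^2 \ltimes$ into a single twisted crossed product of $\T^2 \ltimes_{\sigma} A$ by the dual $\Z^2$-action $\widehat{\sigma}$, the cocycle $\omega_\theta$ being carried along to a bicharacter $\rho_\theta$ on $\Z^2$:
\[
\T^2 \ltimes_{\sigma \otimes \gamma}(A \mintensor C(\T^2_\theta)) \;\cong\; \Z^2 \ltimes_{\widehat{\sigma}, \rho_\theta}(\T^2 \ltimes_{\sigma} A).
\]
Decomposing $\Z^2 = \Z \times \Z$, the first copy contributes $\Z \ltimes_{\widehat{\sigma^{(2)}}} \T^2 \ltimes_{\sigma} A$, presented by Laurent polynomials $\sum f_k v^k$ over $L^1(\T^2, A; \sigma)$ as in the discussion preceding the lemma; the group unitary $w$ of the second copy implements $\widehat{\sigma^{(1)}}$ on $L^1(\T^2, A; \sigma)$ and, by the relation imposed by $\rho_\theta$, satisfies $w v w^{-1} = e^{2\pi i \theta} v$, so that $w$ implements precisely the composite automorphism $(\widehat{\sigma^{(1)}}, \alpha_\theta)$. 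Hence $\Z^2 \ltimes_{\widehat{\sigma}, \rho_\theta}(\T^2 \ltimes_{\sigma} A) \cong \Z \ltimes_{\widehat{\sigma^{(1)}}, \alpha_\theta} \Z \ltimes_{\widehat{\sigma^{(2)}}} \T^2 \ltimes_{\sigma} A$, which combined with the displayed Morita equivalence proves the lemma.

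The step I expect to be the main obstacle is the middle one: carefully tracking the cocycle through the rearrangement of the crossed products, i.e.\ checking that the cocycle $\omega_\theta$ internal to $C(\T^2_\theta)$ reappears, with the correct sign, as the bicharacter $\rho_\theta$ twisting the dual action $\widehat{\sigma}$ on $\T^2 \ltimes_{\sigma} A$, and that this is exactly the twist realised by the automorphism $\alpha_\theta$ on $\Z \ltimes_{\widehat{\sigma^{(2)}}} \T^2 \ltimes_{\sigma} A$. By contrast, the saturation argument and the final unwinding of the twisted $\Z^2$-crossed product into the iterated form are routine.
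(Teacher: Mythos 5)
Your proposal is correct and follows essentially the same route as the paper: Proposition \ref{prop:cl-deform-isom-to-fixed-pt-diag} plus saturation of the diagonal action (each spectral subspace contains the unitary $1 \otimes u^m v^n$) gives the Morita equivalence of $A_\theta$ with $\T^2 \ltimes_{\sigma\otimes\gamma}(A \mintensor C(\T^2_\theta))$, and then one identifies that crossed product with the iterated one. The only difference is in the final identification, where the paper simply writes down the generators $a$, $U_t$, $u$, $v$ of $\T^2 \ltimes(A \mintensor C(\T^2_\theta))$ and their relations (including $uvu^* = e^{2\pi i\theta}v$, which is exactly the twist you worry about tracking through $\rho_\theta$), rather than invoking twisted Takai duality; your machinery-heavier middle step lands in the same place.
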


\begin{proof}
 Since the action of $\T^2$ on $C(\T^2_\theta)$ has full spectrum, so does the one on $A \otimes C(\T^2_\theta)$.  Hence  $A_\theta$, which is isomorphic to $(A \mintensor C(\T^2_\theta))^{\T^2}$ by Proposition \ref{prop:cl-deform-isom-to-fixed-pt-diag}, is strongly Morita equivalent to $\T^2 \ltimes (A \mintensor C(\T^2_\theta))$ (this follows from \cite{MR1645513}, although it should have been known to experts beforehand for this particular case of compact abelian group action).  The latter algebra is generated by $A$ and the operators $\int d t f_t U_t$ of unitaries $U_t$ for $t \in \T^2$ integrated with coefficient functions $f$ in $L^1(\T^2)$, $u$ and $v$ satisfying
 \begin{align*}
 \Ad_{U_t}(a) &= \sigma_t(a) &\Ad_{U_t}(u^m v^n) &= e^{2\pi i (m t_1 + n t_2)} u^m v^n\\
 [a, u^m v^n] &= 0 & u v u^* &= e^{2 \pi i \theta} v
 \end{align*}
 for any $t \in \T^2$ and $(m, n) \in \Z^2$.  This can be identified to  $\Z \ltimes_{\widehat{\sigma^{(1)}}, \alpha_\theta} \Z \ltimes_{\hat{\sigma^{(2)}}} \T^2 \ltimes_{\sigma} A$.
\end{proof}

By abuse of notation, let $\sigma^{(2)}_{\theta}$ denote the automorphism of $\T \ltimes_{\sigma^{(1)}} A$ characterized by
\[
\sigma^{(2)}_{\theta}(f)_t =\sigma^{(2)}_{\theta}(f_t) \quad (f \in L^1(\T, A), t \in \T).
\]
Then we denote by $(\widehat{\sigma^{(1)}},\sigma^{(2)}_{\theta})$ the composition of $\widehat{\sigma^{(1)}}$ and $\sigma^{(2)}_{\theta}$.

\begin{lem}\label{lem:def-cross-prod-mor-equiv}
 The algebra $A_\theta$ is strongly Morita equivalent to $\Z \ltimes_{\widehat{\sigma^{(1)}},\sigma^{(2)}_{\theta}} \T \ltimes_{\sigma^{(1)}} A$.
\end{lem}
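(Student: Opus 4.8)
The plan is to reduce the statement to the previous Lemma by peeling off two layers of the iterated crossed product with the help of Takai duality. By that Lemma, $A_\theta$ is strongly Morita equivalent to $\Z \ltimes_{\widehat{\sigma^{(1)}}, \alpha_\theta} \Z \ltimes_{\hat{\sigma^{(2)}}} \T^2 \ltimes_{\sigma} A$, so it is enough to show that this last algebra is stably isomorphic to $\bigl(\Z \ltimes_{\widehat{\sigma^{(1)}},\sigma^{(2)}_\theta} \T \ltimes_{\sigma^{(1)}} A\bigr)\otimes\cptops(L^2(\T))$.

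First I would decompose the $\T^2$-crossed product as an iterated crossed product $\T^2 \ltimes_\sigma A = \T \ltimes_{\sigma^{(2)}} B$ with $B = \T \ltimes_{\sigma^{(1)}} A$, where $\sigma^{(2)}$ now denotes the extension to $B$ of the second circle action; this extension exists and is canonical because $\sigma^{(1)}$ and $\sigma^{(2)}$ commute. Then $\Z \ltimes_{\hat{\sigma^{(2)}}} \T^2 \ltimes_\sigma A = \Z \ltimes_{\hat{\sigma^{(2)}}} \T \ltimes_{\sigma^{(2)}} B$, and Takai duality provides a stable isomorphism of this algebra with $B \otimes \cptops(L^2(\T))$.

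The technical heart is to track the two auxiliary automorphisms through the Takai isomorphism. In its equivariant form, Takai duality identifies the double-dual $\T$-action of $\sigma^{(2)}$ on the left-hand side with $\sigma^{(2)} \otimes \Ad\rho$ on $B \otimes \cptops(L^2(\T))$, where $\rho$ is the regular representation of $\T$ on $L^2(\T)$; evaluating at the parameter $\theta$, the automorphism $\alpha_\theta$ corresponds to $\sigma^{(2)}_\theta \otimes \Ad\rho_\theta$. On the other hand, $\widehat{\sigma^{(1)}}$ is an automorphism of $B$ that commutes with $\sigma^{(2)}$, so by naturality of the Takai isomorphism it corresponds to $\widehat{\sigma^{(1)}} \otimes \Id$. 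Hence $(\widehat{\sigma^{(1)}}, \alpha_\theta)$ corresponds to $(\widehat{\sigma^{(1)}},\sigma^{(2)}_\theta) \otimes \Ad\rho_\theta$ on $B \otimes \cptops(L^2(\T))$. Finally I would untwist the $\Ad\rho_\theta$ factor: since $(\widehat{\sigma^{(1)}},\sigma^{(2)}_\theta)$ acts trivially on the $\cptops(L^2(\T))$-leg, the map $n \mapsto 1 \otimes \rho_\theta^{\,n}$ is a genuine unitary one-cocycle, so the $\Z$-action $(\widehat{\sigma^{(1)}},\sigma^{(2)}_\theta) \otimes \Ad\rho_\theta$ is exterior equivalent to $(\widehat{\sigma^{(1)}},\sigma^{(2)}_\theta) \otimes \Id$, and taking crossed products gives
\[
\Z \ltimes_{\widehat{\sigma^{(1)}}, \alpha_\theta} \Z \ltimes_{\hat{\sigma^{(2)}}} \T^2 \ltimes_\sigma A \;\cong\; \bigl(\Z \ltimes_{\widehat{\sigma^{(1)}},\sigma^{(2)}_\theta} B\bigr) \otimes \cptops(L^2(\T)),
\]
which is strongly Morita equivalent to $\Z \ltimes_{\widehat{\sigma^{(1)}},\sigma^{(2)}_\theta} \T \ltimes_{\sigma^{(1)}} A$. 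Combined with the previous Lemma, this proves the assertion.

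The main obstacle is the bookkeeping in the third step: carefully identifying the images of $\alpha_\theta$ and $\widehat{\sigma^{(1)}}$ under Takai duality and keeping the direction of $\rho_\theta$ straight (though for a Morita-equivalence statement the precise sign is immaterial), together with the routine check that the map $n\mapsto 1\otimes\rho_\theta^{\,n}$ indeed satisfies the cocycle identity. Everything else is standard permanence of strong Morita equivalence under tensoring by $\cptops$ and under composition.
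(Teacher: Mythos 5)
Your argument is correct, and it uses the same two ingredients as the paper's proof --- Takesaki--Takai duality applied to the $\Z \ltimes_{\hat{\sigma^{(2)}}} \T \ltimes_{\sigma^{(2)}}(-)$ layer, and an untwisting of $\alpha_\theta$ into $\sigma^{(2)}_\theta$ by a unitary --- but in the opposite order, which changes where the technical work sits. The paper conjugates \emph{before} dualizing: it takes the concrete multiplier unitary $V = U^{(2)}_\theta$ of $\Z \ltimes_{\hat{\sigma^{(2)}}} \T^2 \ltimes_{\sigma} A$, checks by a direct computation on Laurent polynomials $\sum f_k v^k$ that $\Ad_V \circ (\widehat{\sigma^{(1)}}, \alpha_\theta) = (\widehat{\sigma^{(1)}}, \sigma^{(2)}_\theta)$, then reorders the iterated crossed products and applies Takai duality to strip off a $\cptops$ factor. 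You dualize first and then untwist: you invoke the equivariant form of Takai duality (identifying $\alpha_\theta$ as the double-dual action at $\theta$, hence as $\sigma^{(2)}_\theta \otimes \Ad\rho_\theta$ on $B \otimes \cptops(L^2(\T))$, and $\widehat{\sigma^{(1)}}$ as $\widehat{\sigma^{(1)}} \otimes \Id$ by naturality) and then remove the residual $\Ad\rho_\theta$ by an exterior equivalence. The paper's route is more hands-on and self-contained (one explicit computation, no appeal to the strengthened duality statement); yours outsources that computation to the standard identification of the double-dual action, at the cost of having to track two automorphisms through the duality isomorphism and justify the cocycle/exterior-equivalence step. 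Both are complete; your version generalizes more readily since it never uses the specific form of $U^{(2)}_\theta$, only the functoriality of the duality.
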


\begin{proof}
 Let $V$ denote the unitary represented by $U^{(2)}_\theta$ in the multiplier algebra of $\Z \ltimes_{\hat{\sigma^{(2)}}} \T^2 \ltimes_{\sigma} A$.  Thus, $V$ is characterized by
 \begin{align*}
 V. f v^k &= \sigma^{(2)}_\theta(g) v^k,&  f_k v^k.V &= e^{2 \pi k i \theta}g v^k
 \end{align*}
 for $f \in L^1(\T^2, A)$, where $g$ denotes the function $f(t_1, t_2 - \theta)$.  Then $V$ is $(\widehat{\sigma^{(1)}}, \alpha_\theta)$ invariant and the crossed product of $\Z \ltimes_{\hat{\sigma^{(2)}}} \T^2 \ltimes_{\sigma} A$ by $\Z$ given by the automorphism $(\widehat{\sigma^{(1)}}, \alpha_\theta)$ and the one by $\Ad_V \circ (\widehat{\sigma^{(1)}}, \alpha_\theta)$ define isomorphic algebras.
 
 Now, one computes
 \[
\Ad_V \circ \widehat{\sigma^{(1)}} \circ \alpha_\theta (f v^k) = \Ad_V(e^{2 \pi i k \theta} h v^k) = \sigma^{(2)}_\theta(h) v^k,
 \]
 where $h(t_1, t_2) = e^{2 \pi i t_1} f(t_1, t_2)$.   hence the automorphism $\Ad_V \circ (\widehat{\sigma^{(1)}}, \alpha_\theta)$ is equal to the composition of $\widehat{\sigma^{(1)}}$ and $\sigma^{(2)}_{\theta}$.  Thus we have an isomorphism between the crossed products as
 \[
\Z \ltimes_{(\widehat{\sigma^{(1)}}, \alpha_\theta)} \T \ltimes_{\sigma^{(1)}}  \Z \ltimes_{\widehat{\sigma^{(2)}}} \T \ltimes_{\sigma^{(2)}} A \simeq \Z \ltimes_{\widehat{\sigma^{(1)}},\sigma^{(2)}_{\theta}} \T \ltimes_{\sigma^{(1)}}  \Z \ltimes_{\widehat{\sigma^{(2)}}} \T \ltimes_{\sigma^{(2)}} A.
 \]
The right hand side can be also expressed as
\[
\Z \ltimes_{\hat{\sigma^{(2)}}} \T \ltimes_{\sigma^{(2)}} \Z \ltimes_{\widehat{\sigma^{(1)}},\sigma^{(2)}_{\theta}} \T \ltimes_{\sigma^{(1)}} A.
\]
By Takesaki-Takai duality \cite{MR0365160}, this algebra is isomorphic to the tensor product of $\Z \ltimes_{\widehat{\sigma^{(1)}},\sigma^{(2)}_{\theta}} \T \ltimes_{\sigma^{(1)}} A$ with the compact operator algebra $\cptops$, which proves our assertion.
\end{proof}

\begin{rmk}\label{rmk:corresp-cl-deform-cross-prod}
The strong Morita equivalence of Lemma \ref{lem:def-cross-prod-mor-equiv} can be described in terms of a bimodule $\mathcal{E}_0$ as follows.  As a linear space, take $\mathcal{E}_0$ as the tensor product $\ell_2 \Z \otimes A$.  Define left actions of $A$, $C^*(\T)$ and $\Z$ on $\mathcal{E}_0$ by
\begin{align*}
a_{(m, n)} . \delta_k \otimes b_{(m', n')} &= \delta_{k} \otimes ab, \\
z^l . \delta_k \otimes b_{(m, n)} &= \begin{cases} \delta_k \otimes b & (l = -k + m),\\
 0& (\text{otherwise}),
 \end{cases}\\
u . \delta_k \otimes b_{(m, n)} &= e^{2 \pi i \theta n}\delta_{k+1} \otimes b,
\end{align*}
where $a_{(m, n)}, b_{(m, n)}$ denotes any element of $A$ with weight ${(m, n)}$ for $\sigma$, $z^l$ is the function $t \mapsto e^{2 \pi i k t}$ in the convolution algebra $C^* \T$ and $u$ is the generating unitary of $C^*\Z$.  These form a covariant representation of $\Z, \T$ and $A$ with respect to the action $\sigma^{(1)}$ of $\T$ on $A$ and $(\hat{\sigma}_1,\sigma^{(2)}_{\theta})$ of $\Z$ on $\T \ltimes_{\sigma^{(1)}} A$.    

Next, consider a right action of $A_\theta$ on $\mathcal{E}_0$ by
\[
\delta_k \otimes b_{(m', n')} . a^{(\theta)}_{(m, n)} = e^{2 \pi i \theta k n} \delta_{k+m} \mintensor ba.
\] 
Then $\mathcal{E}_0$ has an $A_\theta$-valued inner product $\pairing{\delta_k a_{(m, n)}}{\delta_l b_{(m', n')}} = \delta_{k-m, l-m'} (a^*b)^{(\theta)}$.  The completion $\mathcal{E}$ of $\mathcal{E}_0$ with respect to this inner product becomes a bimodule over $\Z \ltimes_{\hat{\sigma}_1,\sigma^{(2)}_{\theta}} \T \ltimes_{\sigma^{(1)}} A$ and $A_\theta$.  Moreover the operators coming from $\Z \ltimes \T \ltimes A$ are precisely the $A_\theta$-compact operators.  Hence we obtain
\[
\Z \ltimes_{\hat{\sigma}_1,\sigma^{(2)}_{\theta}} \T \ltimes_{\sigma^{(1)}} A \simeq \End^0_{A_\theta}(\mathcal{E}) \simeq \cptops \otimes A_\theta.
\]
\end{rmk}

Let $(\widehat{\sigma^{(1)}},\sigma^{(2)}_{\theta t})$ denote the action of $\R$ on $\R \ltimes_{\sigma^{(1)}} A$ given by
\[
 ((\widehat{\sigma^{(1)}},\sigma^{(2)}_{\theta t})_{t_0}f)_s = e^{2 \pi t_0 s} \sigma_{(0, \theta t_0)}(f_s).
\]

\begin{prop}\label{prop:cl-deform-continu-cprod}
 The algebra $A_\theta$ is strongly Morita equivalent to $\R \ltimes_{(\widehat{\sigma^{(1)}},\sigma^{(2)}_{\theta t})} \R \ltimes_{\sigma^{(1)}} A$.
\end{prop}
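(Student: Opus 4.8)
The plan is to upgrade the Morita equivalence of Lemma~\ref{lem:def-cross-prod-mor-equiv} from the discrete group $\Z$ and the compact group $\T$ to the group $\R$, by means of Green's imprimitivity theorem together with the mapping torus description of crossed products by $\R$ of periodic flows. First I would record two standard facts. Since $\sigma^{(1)}$ is the restriction of a $\T$-action, it is a periodic action of $\R$, and there is an $\R$-equivariant isomorphism
\[
\R \ltimes_{\sigma^{(1)}} A \;\cong\; \mathrm{Ind}_{\Z}^{\R}\!\bigl(\T \ltimes_{\sigma^{(1)}} A\bigr),
\]
under which the dual action $\widehat{\sigma^{(1)}}$ of $\hat\R \cong \R$ on the left corresponds to the translation action of $\R$ on the right; here the induced algebra is the mapping torus $\{f \in C(\R, \T \ltimes_{\sigma^{(1)}} A) : f(t-1) = \widehat{\sigma^{(1)}}(f(t))\}$ formed with the dual action $\widehat{\sigma^{(1)}}$ of $\hat\T \cong \Z$ on $\T \ltimes_{\sigma^{(1)}} A$. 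Secondly, Green's imprimitivity theorem applied to the inclusion $\Z \subset \R$ yields, for any $\Z$-$C^*$-algebra $(D, \alpha)$, a strong Morita equivalence between the crossed product $\R \ltimes_{\mathrm{lt}} \mathrm{Ind}_{\Z}^{\R}(D, \alpha)$ by the left translation action of $\R$ and the crossed product $\Z \ltimes_\alpha D$.

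Next I would bring in the parameter $\theta$. The action $\sigma^{(2)}$ also factors through $\T$ and commutes with $\sigma^{(1)}$, hence it induces a periodic $\R$-action $t \mapsto \sigma^{(2)}_{\theta t}$ on $D = \T \ltimes_{\sigma^{(1)}} A$ (acting on the $A$-coefficients) which commutes with both the translation action and the dual action $\widehat{\sigma^{(1)}}$. Taking $\alpha = (\widehat{\sigma^{(1)}},\sigma^{(2)}_\theta)$ as in Lemma~\ref{lem:def-cross-prod-mor-equiv}, the assignment $f \mapsto (t \mapsto \sigma^{(2)}_{-\theta t}(f(t)))$ should give an isomorphism $\mathrm{Ind}_{\Z}^{\R}(D, (\widehat{\sigma^{(1)}},\sigma^{(2)}_\theta)) \cong \mathrm{Ind}_{\Z}^{\R}(D, \widehat{\sigma^{(1)}})$ carrying the translation action on the source to ``translation followed by the pointwise action $\sigma^{(2)}_{\theta t}$'' on the target. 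Composing with the equivariant isomorphism of the previous paragraph identifies this target with $\R \ltimes_{\sigma^{(1)}} A$, and the transported action becomes exactly $(\widehat{\sigma^{(1)}},\sigma^{(2)}_{\theta t})$ as defined just before the Proposition. Hence
\[
\R \ltimes_{(\widehat{\sigma^{(1)}},\sigma^{(2)}_{\theta t})} \R \ltimes_{\sigma^{(1)}} A \;\cong\; \R \ltimes_{\mathrm{lt}} \mathrm{Ind}_{\Z}^{\R}\!\bigl(\T \ltimes_{\sigma^{(1)}} A,\, (\widehat{\sigma^{(1)}},\sigma^{(2)}_\theta)\bigr),
\]
which by Green's theorem is strongly Morita equivalent to $\Z \ltimes_{(\widehat{\sigma^{(1)}},\sigma^{(2)}_\theta)} \T \ltimes_{\sigma^{(1)}} A$, and the latter is strongly Morita equivalent to $A_\theta$ by Lemma~\ref{lem:def-cross-prod-mor-equiv}, which would complete the argument.

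The hard part is not any single step but the bookkeeping. I expect the main obstacle to be establishing the equivariant mapping torus description of $\R \ltimes_{\sigma^{(1)}} A$ and then tracking precisely how the dual action, the translation action and the $\sigma^{(2)}$-twist correspond under the successive identifications, as well as checking that the untwisting by $\sigma^{(2)}$ is compatible with Green's imprimitivity bimodule. If this turns out to be awkward to organize, an alternative is to construct the imprimitivity bimodule directly on a completion of $L^2(\R) \otimes \smooth{A}$, in exact analogy with the discrete bimodule $\mathcal{E}_0$ of Remark~\ref{rmk:corresp-cl-deform-cross-prod}, which sidesteps the mapping torus fact at the cost of a longer but routine verification.
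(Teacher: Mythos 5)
Your proposal is correct and is essentially the paper's own argument: the paper likewise realizes $\R \ltimes_{\sigma^{(1)}} A$ as the mapping torus of $\widehat{\sigma^{(1)}}$ on $\T \ltimes_{\sigma^{(1)}} A$, invokes the strong Morita equivalence between the crossed product by the suspension flow and the crossed product by $\Z$ (your Green imprimitivity step), and uses the same untwisting isomorphism $(\beta f)_t = \sigma^{(2)}_{\theta t}(f_t)$ to transport the flow to $(\widehat{\sigma^{(1)}},\sigma^{(2)}_{\theta t})$ before concluding via Lemma \ref{lem:def-cross-prod-mor-equiv}.
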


\begin{proof}
For any automorphism $\alpha$ of $A$, the mapping cone $M_\alpha A$ of $\alpha$ is defined to be the algebra of functions from $\R$ to $A$ satisfying $f(t + 1) = \alpha(f(t))$ for any $t \in \R$.  It admits a natural action $\tilde{\alpha}$, called the suspension flow, of $\R$ by $(\tilde{\alpha}_s f)_t = f(t + s)$  Regarding this flow we have the strong Morita equivalence between $\R \ltimes_{\tilde{\alpha}} M_\alpha A$ and $\R \ltimes_\alpha A$.

 The crossed product $\R \ltimes_{\sigma^{(1)}} A$ is isomorphic to the mapping cone of $\widehat{\sigma^{(1)}}$ on $\T \ltimes_{\sigma^{(1)}} A$, and the suspension flow is identified to $\widehat{\sigma^{(1)}}\colon \R \curvearrowright \R \ltimes_{\sigma^{(1)}} A$.
 
 We claim that the mapping cone of the automorphism$(\widehat{\sigma^{(1)}},\sigma^{(2)}_{\theta})$ on $\T \ltimes_{\sigma^{(1)}} A$ is isomorphic to $\R \ltimes_{\sigma^{(1)}} A$.  Indeed, there is the `untwisting isomorphism' $f \mapsto \beta f, (\beta f)_t =\sigma^{(2)}_{\theta t}(f_t)$ from $M_{\widehat{\sigma^{(1)}}} \T \ltimes_{\sigma^{(1)}} A$ to $M_{(\widehat{\sigma^{(1)}},\sigma^{(2)}_{\theta}} \T \ltimes_{\sigma^{(1)}} A$ which is compatible with the suspension flow.
\end{proof}

\begin{cor}[\cite{MR1237992}]\label{cor:k-group-isom}
 The $K$-groups of $A$ and $A_\theta$ are naturally isomorphic.
\end{cor}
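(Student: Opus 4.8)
The plan is to read off the $K$-groups of $A_\theta$ from the crossed-product model of Proposition \ref{prop:cl-deform-continu-cprod} together with two applications of Connes' analogue of the Thom isomorphism. A strong Morita equivalence of $C^*$-algebras induces a canonical isomorphism on $K$-theory, so Proposition \ref{prop:cl-deform-continu-cprod} already reduces the problem to computing $K_*\bigl(\R \ltimes_{(\widehat{\sigma^{(1)}},\sigma^{(2)}_{\theta t})} \R \ltimes_{\sigma^{(1)}} A\bigr)$.

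Both crossed products there are taken with respect to genuine $\R$-actions: the inner one is $\R \ltimes_{\sigma^{(1)}} A$ for the flow coming from the first circle of $\sigma$, and the outer one is the $\R$-action $(\widehat{\sigma^{(1)}},\sigma^{(2)}_{\theta t})$ on $B := \R \ltimes_{\sigma^{(1)}} A$. I would then invoke Connes' Thom isomorphism $K_*(\R \ltimes_\alpha C) \cong K_{*+1}(C)$, valid for any $\R$-action $\alpha$ on a $C^*$-algebra $C$. Applying it to the outer crossed product gives $K_*\bigl(\R \ltimes_{(\widehat{\sigma^{(1)}},\sigma^{(2)}_{\theta t})} B\bigr) \cong K_{*+1}(\R \ltimes_{\sigma^{(1)}} A)$, and applying it once more to $B$ together with Bott periodicity gives $K_{*+1}(\R \ltimes_{\sigma^{(1)}} A) \cong K_{*+2}(A) \cong K_*(A)$. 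Composing these with the Morita equivalence yields the asserted isomorphism $K_*(A_\theta) \cong K_*(A)$.

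For the word ``naturally'' I would observe that each ingredient --- the isomorphism on $K$-theory attached to a strong Morita equivalence, the Connes--Thom isomorphism, and Bott periodicity --- is functorial with respect to equivariant $*$-homomorphisms, so the composite is natural in the data $(\smooth{A}, H, D, \sigma, U)$. One should also check that at $\theta = 0$ the outer action degenerates to the dual action $\widehat{\sigma^{(1)}}$, so that the double crossed product becomes $A \otimes \cptops$ by Takai duality and the composite reduces to the identity of $K_*(A)$, consistently with $A_0 = A$; this pins the isomorphism down and shows that it agrees with the one of Rieffel \cite{MR1237992}.

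The point requiring care is essentially bookkeeping: one must verify that the chain of $C^*$-algebra identifications and Morita equivalences established in Proposition \ref{prop:cl-deform-continu-cprod} and the lemmas before it is $\R$-equivariant for all the auxiliary flows in play, so that the two Thom isomorphisms compose coherently and the result is independent of the choices made en route. Since Proposition \ref{prop:cl-deform-continu-cprod} is already in hand, this is largely routine, and the substance of the corollary is Rieffel's original computation repackaged through the present crossed-product picture.
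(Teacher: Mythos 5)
Your argument is correct and follows essentially the same route as the paper: Proposition \ref{prop:cl-deform-continu-cprod} plus Morita invariance reduces the computation to the double crossed product, and the Connes--Thom isomorphism (applied by the paper once, to conclude $\theta$-independence via $K_{*+1}(\R \ltimes_{\sigma^{(1)}} A)$, and by you a second time to land explicitly in $K_*(A)$) finishes the proof. Your extra remarks on naturality and the $\theta=0$ consistency check are sound but not part of the paper's own, terser argument.
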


\begin{proof}
 By Proposition \ref{prop:cl-deform-continu-cprod}, the $K$-group of $A_\theta$ can be identified to that of $\R \ltimes_{\widehat{\sigma^{(1)}},\sigma^{(2)}_{\theta t}} \R \ltimes_{\sigma^{(1)}} A$.  By Connes-Thom isomorphism \cite{MR605351}, the latter is naturally isomorphic to the $K$-group of $\R \ltimes_{\sigma^{(1)}} A$ with parity exchange in the degree.  This does not depend on $\theta$.
\end{proof}

\subsection{Smooth $\KK$-equivalence of CL deformations} Now we turn to the smooth analogue of the above $\KK$-equivalence between the Connes-Landi deformations.  The strong Morita equivalence of Proposition \ref{prop:cl-deform-continu-cprod} restricts to an isomorphism between the rapid decay infinite matrix algebra with coefficients in $\smooth{A}_\theta$ and the ``smooth'' crossed product $\Z \ltimes \T \ltimes \smooth{A}$.

Let $\T \ltimes_{\sigma^{(1)}} \smooth{A}$ denote the algebra given by the convolution product on the linear space $C^\infty(\T; \smooth{A})$ with convolution product twisted by $\sigma^{(1)}$.  This algebra has seminorms given by the seminorms $\nu_{k, \alpha}$ for $k \in \N$ and $\alpha \in \N^2$ on $\smooth{A}$ and the derivations with respect to the variable on $\T$.  Next, let $\Z \ltimes_{\hat\sigma^{(1)}\sigma^{(2)}(\theta)} \T \ltimes_{\sigma^{(1)}} \smooth{A}$ denote the algebra of sequences $(f_n)_{n \in \Z}$ in $ \T \ltimes_{\sigma^{(1)}} \smooth{A}$ which are rapid decay with respect to the aforementioned seminorms, endowed with the convolution product twisted by $\hat\sigma^{(1)}\sigma^{(2)}(\theta)$.  We have the inclusions $\T \ltimes_{\sigma^{(1)}} \smooth{A} \subset \T \ltimes A$ and $\Z \ltimes_{\hat\sigma^{(1)}\sigma^{(2)}(\theta)} \T \ltimes_{\sigma^{(1)}} \smooth{A} \subset \Z \ltimes \T \ltimes A$ into the corresponding $C^*$-algebras.

Let $\smooth{K}^\infty$ denote the algebra of the rapid decay matrices on $\Z$ (\cite{MR1082838}, Section 2): the elements of $\smooth{K}^\infty$ are the matrices $(a_{i, j})_{i, j \in \Z}$ indexed by $\Z$ satisfying
 \[
 \sqrt{1 + i^2 + j^2}^k a_{i, j} \rightarrow 0 \quad ( i, j \rightarrow \infty)
 \]
for $k \in \N$. The algebra $\smooth{K}^\infty$ becomes a Fr\'{e}chet algebra with respect to the family of (semi-)norms
\[
\mu_k((a_{i, j})_{i, j \in \Z} ) =  \max_{i, j \in \Z} \sqrt{1 + i^2 + j^2}^k a_{i, j},
\]
for $k \in \N$.  The projective tensor product $\smooth{K}^\infty \ptensor \smooth{A}_\theta$ is identified to the algebra of the $\smooth{A}_\theta$-valued matrices which are rapid decay with respect to the seminorms on $\smooth{A}_\theta$ (loc. cit. Corollary 2.4).

\begin{prop}\label{prop:smooth-alg-pres-z-t}
 The Fr\'{e}chet algebras $\Z \ltimes_{\widehat{\sigma^{(1)}},\sigma^{(2)}_{\theta}} \T \ltimes_{\sigma^{(1)}} \smooth{A}$ and $\smooth{K}^\infty \ptensor \smooth{A}_\theta$ are isomorphic via the correspondences
 \begin{multline}\label{eq:cross-prod-cl-deform-ker-corr}
 \Z \ltimes_{\widehat{\sigma^{(1)}},\sigma^{(2)}_{\theta}} \T \ltimes_{\sigma^{(1)}} \smooth{A} \ni u^nf(t) \mapsto k_{u^n f(t)}(l, m)_{l, m \in \Z} \\
  = \left( \int_{\T^2} e^{2 \pi i (m t - (n + m - l)s)} \sigma_{(s - t, (n-l)\theta)}(f(t)) d t d s\right)^{(\theta)}
  \end{multline}
  and
  \begin{gather*}
 \int_\T \sigma^{(1)}_{t}(a) \sum_{m \in \Z} e^{- 2 \pi i m t} u^m d t \mapsfrom a^{(\theta)} \in \smooth{A}_\theta,\\
 u^{l_0 - m_0}e^{-2 \pi i m_0 s} \mapsfrom e_{l_0, m_0} = \delta_{(l_0, m_0)}(l, m) \in \cptops^\infty.
 \end{gather*}
 Here the terms in the left hand side denote elements in $\Z \ltimes_{\widehat{\sigma^{(1)}},\sigma^{(2)}_{\theta}} \T \ltimes_{\sigma^{(1)}} \smooth{A}$ and the ones in the right hand side denote $\smooth{A}_\theta$-valued matrices in $\smooth{K}^\infty \ptensor \smooth{A}_\theta$.
\end{prop}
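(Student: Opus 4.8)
The plan is to deduce the statement from the $C^*$-algebraic isomorphism $\Z \ltimes_{\widehat{\sigma^{(1)}},\sigma^{(2)}_{\theta}} \T \ltimes_{\sigma^{(1)}} A \simeq \cptops \ptensor A_\theta$ already obtained in Remark \ref{rmk:corresp-cl-deform-cross-prod}, by checking that it restricts to a topological isomorphism between the two Fréchet subalgebras in question. First I would verify that the displayed kernel $k_{u^n f(t)}(l,m)$ is nothing but the matrix, with respect to the basis $(\delta_k\otimes -)_{k\in\Z}$ of $\mathcal{E}_0 = \ell_2\Z\otimes A$, of the operator implemented on $\mathcal{E}$ by $u^n f(t)$ under the covariant representation of Remark \ref{rmk:corresp-cl-deform-cross-prod}; the twist $\sigma_{(s-t,(n-l)\theta)}$ is precisely the discretised form of the untwisting isomorphism used in the proof of Proposition \ref{prop:cl-deform-continu-cprod}. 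Granting this bookkeeping identification, the two correspondences of the statement are by construction the restriction of that $C^*$-isomorphism and of its inverse; hence the fact that each is an algebra homomorphism, and that the two are mutually inverse, requires nothing beyond the computation identifying them. What genuinely has to be proved is continuity in both directions: (a) that $u^n f(t)\mapsto k_{u^n f(t)}$ maps $\Z \ltimes_{\widehat{\sigma^{(1)}},\sigma^{(2)}_{\theta}} \T \ltimes_{\sigma^{(1)}} \smooth{A}$ into $\smooth{K}^\infty\ptensor\smooth{A}_\theta$ continuously for the respective Fréchet topologies, and (b) the analogous statement for the inverse map. Together with the $C^*$-level bijectivity these two inclusions of continuous homomorphisms force the images to coincide, giving the asserted Fréchet isomorphism.

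For part (a) one fixes a seminorm $\mu_N$ on $\smooth{K}^\infty$ and a seminorm $\nu_{k,\alpha}$ on $\smooth{A}_\theta$ and estimates $\sqrt{1+l^2+m^2}^{\,N}\,\nu_{k,\alpha}\bigl(k_{u^n f(t)}(l,m)\bigr)$. That the entries lie in $\smooth{A}_\theta$ at all follows from the smoothness of $t\mapsto\sigma_{(s-t,(n-l)\theta)}(f(t))$ into $\smooth{A}$: the $(s,t)$-integral stays in $\smooth{A}$ and applying $(-)^{(\theta)}$ lands in $\smooth{A}_\theta$. To bound $\nu_{k,\alpha}$ of an entry one commutes $\delta^k$, the torus derivations $[h,-]^{(\alpha)}$, and the operation $(-)^{(\theta)}$ past the $\sigma$-twist inside the integral, exactly as in the proof that $\smooth{A}_\theta$ is closed under multiplication (and using Lemma \ref{lem:part-sum-conv}); this expresses the entry-seminorm in terms of finitely many seminorms $\nu_{k,\alpha'}(\partial_t^j f_n)$. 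The power $\sqrt{1+l^2+m^2}^{\,N}$ is then absorbed by integration by parts: each integration by parts in $s$ produces a factor $(n+m-l)^{-1}$ (at the cost of one extra application of $[h_1,-]$), each integration by parts in $t$ a factor comparable to $m^{-1}$, and the rapid decay of the sequence $(f_n)_n$ supplies arbitrary powers of $n^{-1}$; since $|l|\le|n|+|m|+|n+m-l|$, combining these three sources of decay dominates $\sqrt{1+l^2+m^2}^{\,N}$ for every $N$ while leaving enough decay in $n$ to sum the series over $n\in\Z$ in the Fréchet topology of $\smooth{K}^\infty\ptensor\smooth{A}_\theta$.

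Part (b) is the mirror image and is simpler: the generators $a^{(\theta)}$ and $e_{l_0,m_0}$ are sent to manifestly smooth, compactly indexed elements, and rapid decay of the matrix coefficients of a general element of $\smooth{K}^\infty\ptensor\smooth{A}_\theta$ with values in $\smooth{A}_\theta$ translates, after collecting the finitely many $t$- and $s$-integrations implicit in the inverse formulas, directly into rapid decay of the coefficient sequence of the corresponding element of $\Z \ltimes_{\widehat{\sigma^{(1)}},\sigma^{(2)}_{\theta}} \T \ltimes_{\sigma^{(1)}} \smooth{A}$, with the seminorm estimates again flowing through the same commutation identities. I expect the main obstacle to be the bookkeeping in part (a): keeping track simultaneously of (i) boundedness of every $\nu_{k,\alpha}$ of the entries after pushing the unbounded derivations $\delta$, the torus derivations $[h,-]^{(\alpha)}$ and the deforming map $(-)^{(\theta)}$ through the twist $\sigma_{(s-t,(n-l)\theta)}$ under the oscillatory integral, and (ii) joint rapid decay in $(l,m)$ with enough uniformity in $n$ that the defining series converges in the Fréchet topology. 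Once that single estimate is in place everything else, including the conclusion that the two continuous homomorphisms are inverse Fréchet isomorphisms, is immediate.
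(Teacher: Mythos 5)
Your proposal is correct and follows essentially the same route as the paper: the multiplicativity and the identification of the kernel formula are verified exactly as you describe, by computing the action of $u^n f(t)$ on the basis $(\delta_k\otimes 1)_{k\in\Z}$ of the bimodule $\mathcal{E}_0$ from Remark \ref{rmk:corresp-cl-deform-cross-prod} and matching it against \eqref{eq:cross-prod-cl-deform-ker-corr}. The paper disposes of the Fr\'echet well-definedness in a single sentence (``by the assumption on the smoothness of $\sigma$ and the description of the smooth crossed products''), so your integration-by-parts estimate for the rapid decay in $(l,m)$ is simply a more explicit rendering of that step rather than a different argument.
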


\begin{proof}
These formulae determine well-defined linear maps between the two algebras by the assumption on the smoothness of $\sigma$ and the description of the smooth crossed products above.

On the other hand, the multiplicativity in the assertion follows from the bimodule described in Remark \ref{rmk:corresp-cl-deform-cross-prod}.  We may take a ``basis'' $(\delta_k \otimes 1)_{k \in \Z}$ of $\mathcal{E}_0$ over the right action of $\smooth{A}_\theta$.  We indicate the case for (\ref{eq:cross-prod-cl-deform-ker-corr}) in the following.  Suppose that $f \in C^\infty(\T; \smooth{A})$ is a function of the form $t \mapsto e^{2 \pi i l t} a$ for some integer $l$ and $a \in \smooth{A}$ of weight $(m, n')$ for $\sigma$.  First we have
\[
u^n f(t) . \delta_k \otimes 1 = 
\begin{cases}
e^{2 \pi i \theta n n'} \delta_{k+n} \otimes a & (k = m - l)\\
0 & (\text{otherwise}).
\end{cases}
\]
On the other hand we have
\[
e^{2 \pi i \theta n n'} \delta_{k+n} \otimes a = (\delta_{k - m + n} \otimes 1) . (e^{2 \pi i \theta \{ (m - k - n) n' + n n'\}} a)^{(\theta)}.
\]
Hence $u^n f(t)$ represents a operator which moves the $(m-l)$-th base to the $(n - l)$-th base multiplied by $e^{2 \pi i \theta l n'} a^{(\theta)}$.  Thus we need to show that the formula (\ref{eq:cross-prod-cl-deform-ker-corr}) applied to our choice of $f$ gives $e^{2 \pi i \theta l n'} a^{(\theta)} \delta_{n - l, m - l}(l', m')$.  Now, when $l'$ and $m'$ are arbitrary integers,
\begin{equation*}\begin{split}
k_{u^n f}(l', m') &= \int_{\T^2} e^{2 \pi i (m' t - (n + m' - l')s)} \sigma_{(s - t, (n-l')\theta)}(f(t)) d t d s \\
&= \int_{\T^2} e^{2 \pi i \{m' t - (n + m' - l')s + (s - t) m + (n - l') \theta n' + l t\}} a d t d s \\
&=
\begin{cases}
e^{2 \pi i \theta (n - l') n'} a & (l + m' - m = 0, m - n - m' + l' = 0) \\
0 & (\text{otherwise}).
\end{cases}
\end{split}
\end{equation*}
 In the nontrivial case of $l + m' - m = 0$ and $m - n - m' + l' = 0$ one has $l' = n - l$ and $(n - l')n' = l n$, which is the desired relation.   The rest is proved in a similar way.
\end{proof}

There is also a continuous analogue of the above argument. Let $\smooth{K}^\infty_\R$ denote the algebra of the compact operators on $L^2(\R)$ whose integral kernel belong to the Schwartz class on $\R^2$.  The projective tensor product $\smooth{K}^\infty_\R \ptensor \smooth{A}_\theta$ is identified to the algebra of the $\smooth{A}_\theta$-valued integral kernels $I(x, y)$ which satisfy
\[
\max_{x, y} \norm{ \delta^k((x^m +y^n)(\partial_x^{m'} + \partial_y^{n'})I(x, y)) } < \infty
\]
for any $m, n, m'$ and $n'$ in $\N$.

Let $\schwfuncs^*(\R; \smooth{A}, \sigma^{(1)})$ be the convolution algebra of the $\smooth{A}$-valued Schwartz functions with respect to the action $\sigma$.  The action $(\widehat{\sigma^{(1)}},\sigma^{(2)}_{\theta t})$ restricts to a smooth action on this algebra, hence we may take the convolution algebra
\[
\schwfuncs^*(\R; \schwfuncs^*(\R; \smooth{A}, \sigma^{(1)}), (\widehat{\sigma^{(1)}},\sigma^{(2)}_{\theta t}))
\]
of the $\schwfuncs^*(\R; \smooth{A}, \sigma^{(1)})$-valued Schwartz functions.  Let $\R \ltimes_{\widehat{\sigma^{(1)}},\sigma^{(2)}_{\theta t}} \R \ltimes_{\sigma^{(1)}} \smooth{A}$ denote this algebra.  We write $f(t, \tau)$ for a typical element of this algebra, where for any fixed $\tau$, the function $t \mapsto f(t, \tau)$ is in $ \schwfuncs^*(\R; \smooth{A}, \sigma^{(1)})$.  Then one has the following analogue of Proposition \ref{prop:smooth-alg-pres-z-t}:

\begin{prop}\label{prop:smooth-alg-pres}
 The Fr\'{e}chet algebras $\R \ltimes_{\widehat{\sigma^{(1)}},\sigma^{(2)}_{\theta t}} \R \ltimes_{\sigma^{(1)}} \smooth{A}$ and $\smooth{K}^\infty_\R \ptensor \smooth{A}_\theta$ are isomorphic.  The correspondence between these two algebras are given by
  \begin{multline*}
\R \ltimes_{\widehat{\sigma^{(1)}},\sigma^{(2)}_{\theta t}} \R \ltimes_{\sigma^{(1)}} \smooth{A} \ni f_{t, \tau} \mapsto k_{f_{t, \tau}}(\lambda, \mu)= \\
\left( \int_{\R^2} e^{2 \pi i (\mu t - (\tau + \mu - \lambda)s)} \sigma_{(s - t, (\tau - \lambda)\theta)}(f_{t, \tau}) d t d s d \tau \right)^{(\theta)},
\end{multline*}
and
\[
f^k_{s, \tau} = \int_\R \sigma^{(1)}_{t}(k(\lambda, \mu)) e^{- 2 \pi i (\mu - \lambda + \tau) t + \mu s}d t d \lambda d \mu \mapsfrom k(\lambda, \mu) \in \cptops^\infty_\R \ptensor \smooth{A}_\theta.
 \]
 \end{prop}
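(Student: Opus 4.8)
The plan is to transcribe the proof of Proposition \ref{prop:smooth-alg-pres-z-t} to the continuous setting, replacing the index set $\Z$ by $\R$, rapid-decay sequences by Schwartz functions, and the rapid-decay matrix algebra $\smooth{K}^\infty$ by $\smooth{K}^\infty_\R$. First I would check that the two displayed formulas define continuous linear maps between the two Fr\'echet algebras. For the forward map, the point is that for a Schwartz element $f_{t,\tau}$ the oscillatory integral in $(t,s)$ together with the integral in $\tau$ produces an $\smooth{A}_\theta$-valued kernel $k(\lambda,\mu)$ lying in $\smooth{K}^\infty_\R\ptensor\smooth{A}_\theta$, i.e. satisfying the bound $\max_{x,y}\norm{\delta^k((x^m+y^n)(\partial_x^{m'}+\partial_y^{n'})I(x,y))}<\infty$; this follows from integration by parts in $s$ and $t$ (each integration by parts trades an oscillatory factor for a factor of $\lambda$ or $\mu$, hence gives arbitrary polynomial decay in the kernel variables), using that $\sigma$ acts smoothly and isometrically for the seminorms $\nu_{k,\alpha}$ of \eqref{eq:smooth-seminorms} by \eqref{eq:unif-bound-action}, together with the estimate of Lemma \ref{lem:part-sum-conv} to control the passage $a\mapsto a^{(\theta)}$. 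The backward map is handled the same way via a Fourier-inversion computation.

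The multiplicativity is the substantive point, and I would establish it, exactly as in Proposition \ref{prop:smooth-alg-pres-z-t}, by exhibiting the continuous analogue of the bimodule $\mathcal{E}_0$ of Remark \ref{rmk:corresp-cl-deform-cross-prod}: take a dense subspace of $L^2(\R)\otimes A$ carrying a left action of $\R\ltimes_{\sigma^{(1)}}A$ and of the second copy of $\R$ via the twisted action $(\widehat{\sigma^{(1)}},\sigma^{(2)}_{\theta t})$, together with a right action of $\smooth{A}_\theta$ and an $\smooth{A}_\theta$-valued inner product, mirroring the formulas for $\delta_k\otimes b_{(m,n)}$ with the discrete label $k$ replaced by a continuous parameter. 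One verifies that these actions are compatible and that the resulting Hilbert-module completion realizes the Morita equivalence of Proposition \ref{prop:cl-deform-continu-cprod} concretely, identifying $\R\ltimes_{(\widehat{\sigma^{(1)}},\sigma^{(2)}_{\theta t})}\R\ltimes_{\sigma^{(1)}}A$ with $\smooth{K}^\infty_\R\ptensor A_\theta$. Then the forward map of the Proposition is nothing but ``compute the integral kernel, on $L^2(\R)$, of the operator represented by $f_{t,\tau}$ through this bimodule structure'', so multiplicativity becomes automatic once one checks that the stated formula agrees with that kernel. I would carry out that check on the total set of elements $f_{t,\tau}=\phi(\tau)\psi(t)a$ with $\phi,\psi$ scalar Schwartz functions and $a\in\smooth{A}$ homogeneous of weight $(m,n)$, paralleling the computation with $f(t)=e^{2\pi ilt}a$ in the discrete case; it reduces to a routine Fourier/Gaussian manipulation.

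Finally, to upgrade this to an isomorphism of Fr\'echet algebras I would verify that both maps are continuous for the respective families of seminorms — the $\nu_{k,\alpha}$ together with the $\R$-derivatives on the iterated crossed-product side, and the kernel seminorms above on the $\smooth{K}^\infty_\R\ptensor\smooth{A}_\theta$ side — and that the two maps are mutually inverse, which reduces to Fourier inversion inside the integrals. The main obstacle I anticipate is purely analytic: making the oscillatory integrals rigorous and obtaining honest two-sided seminorm estimates, so that the correspondence is a \emph{topological} and not merely algebraic isomorphism. The algebraic core — multiplicativity via the concrete bimodule — is a direct transcription of Proposition \ref{prop:smooth-alg-pres-z-t} and should present no new difficulty.
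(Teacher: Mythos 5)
Your proposal matches the paper's intended argument: the paper offers no separate proof for this continuous version, presenting it only as the ``analogue'' of Proposition \ref{prop:smooth-alg-pres-z-t}, whose proof establishes well-definedness from smoothness of $\sigma$ and multiplicativity via the bimodule of Remark \ref{rmk:corresp-cl-deform-cross-prod} checked on homogeneous elements $e^{2\pi i l t}a$ --- exactly the transcription you describe, with Schwartz functions and $\smooth{K}^\infty_\R$ replacing rapid-decay sequences and $\smooth{K}^\infty$. Your added attention to the oscillatory-integral estimates and two-sided seminorm continuity only makes explicit what the paper leaves implicit.
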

 
\begin{cor}\label{prop:per-cyclic-isom}
 There is a natural isomorphism  $\Phi_\theta\colon \HP^*(\smooth{A}) \rightarrow \HP^*(\smooth{A}_\theta)$ between the periodic cyclic cohomology groups which is compatible with the identification of the $K$-groups of Corollary \ref{cor:k-group-isom}.
\end{cor}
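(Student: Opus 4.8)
The plan is to transport the chain of identifications behind Corollary~\ref{cor:k-group-isom} to the level of periodic cyclic cohomology, replacing strong Morita equivalence of $C^*$-algebras and the topological Connes--Thom isomorphism by their smooth and cyclic counterparts. The starting point is Proposition~\ref{prop:smooth-alg-pres}, which identifies the Fr\'echet algebra $\smooth{K}^\infty_\R \ptensor \smooth{A}_\theta$ with $\R \ltimes_{\widehat{\sigma^{(1)}},\sigma^{(2)}_{\theta t}} \R \ltimes_{\sigma^{(1)}} \smooth{A}$. Since periodic cyclic cohomology is invariant under the smooth stabilization $\smooth{B} \mapsto \smooth{K}^\infty_\R \ptensor \smooth{B}$ (the generalized trace induces the isomorphism, just as for the rapid-decay matrix algebra $\smooth{K}^\infty$), one gets
\[
\HP^*(\smooth{A}_\theta) \simeq \HP^*\bigl(\R \ltimes_{\widehat{\sigma^{(1)}},\sigma^{(2)}_{\theta t}} \R \ltimes_{\sigma^{(1)}} \smooth{A}\bigr).
\]

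Next I would invoke the Thom isomorphism in periodic cyclic cohomology for smooth one-parameter crossed products, due to Elliott, Natsume and Nest: for a smooth action $\alpha$ of $\R$ on a suitable $m$-convex Fr\'echet algebra $\smooth{B}$ there is a natural isomorphism $\HP^*(\R \ltimes_\alpha \smooth{B}) \simeq \HP^{*+1}(\smooth{B})$. Applying it first to the outer action $(\widehat{\sigma^{(1)}},\sigma^{(2)}_{\theta t})$ of $\R$ on $\R \ltimes_{\sigma^{(1)}} \smooth{A}$ and then to $\sigma^{(1)}$ on $\smooth{A}$, together with the $\Z/2$-periodicity of $\HP^*$, one obtains
\[
\HP^*(\smooth{A}_\theta) \simeq \HP^{*+1}\bigl(\R \ltimes_{\sigma^{(1)}} \smooth{A}\bigr) \simeq \HP^{*+2}(\smooth{A}) = \HP^*(\smooth{A}),
\]
and $\Phi_\theta$ is defined as the inverse of this composite; as in Corollary~\ref{cor:k-group-isom}, the intermediate term $\HP^*(\R \ltimes_{\sigma^{(1)}} \smooth{A})$ does not involve $\theta$. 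Along the way one must confirm that the smooth crossed products occurring in Proposition~\ref{prop:smooth-alg-pres} satisfy the hypotheses of the Elliott--Natsume--Nest theorem; this follows from the standing regularity and smoothness assumptions on $(\smooth{A}, H, D, \sigma)$ kept in Section~2, which make these algebras $m$-convex, densely and holomorphically embedded in the corresponding $C^*$-crossed products, and equipped with smooth $\R$-actions. If needed for the later sections, an explicit description of $\Phi_\theta$ on representative cocycles can then be extracted by tracing the stabilization and the two Thom maps through the formulas of Proposition~\ref{prop:smooth-alg-pres}.

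Finally there is the compatibility with the $K$-theory isomorphism $\psi_\theta\colon K_*(\smooth{A}) \to K_*(\smooth{A}_\theta)$ of Corollary~\ref{cor:k-group-isom}. Each of the three steps above has, as its topological shadow, exactly the $K$-theoretic step used in Corollary~\ref{cor:k-group-isom}: the smooth form of Proposition~\ref{prop:cl-deform-continu-cprod} combined with the holomorphic-closure identification $K_*(\smooth{A}_\theta) \simeq K_*(A_\theta)$, stability of $K$-theory under $\smooth{K}^\infty_\R$, and the topological Connes--Thom isomorphism applied twice. In each case the Chern character intertwines the cyclic map with its topological counterpart --- for the crossed-product step this compatibility of the cyclic Thom isomorphism with Connes' topological one is part of the Elliott--Natsume--Nest theorem, and for the stabilization and the holomorphic closure it is classical --- so a diagram chase of Chern characters gives $\pairing{\Phi_\theta(\varphi)}{x} = \pairing{\varphi}{\psi_\theta^{-1}(x)}$ for $\varphi \in \HP^*(\smooth{A})$ and $x \in K_*(\smooth{A}_\theta)$. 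The main obstacle is precisely this last point: one must check that the smooth bimodule of Remark~\ref{rmk:corresp-cl-deform-cross-prod}, and its continuous analogue underlying Proposition~\ref{prop:smooth-alg-pres}, completes to the $C^*$-Morita bimodule of Proposition~\ref{prop:cl-deform-continu-cprod}, so that the smooth and the $C^*$-level chains of identifications induce the same map on $K$-theory, and that the Chern character is natural for all the maps appearing in the chain.
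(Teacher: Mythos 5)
Your proposal is correct and follows essentially the same route as the paper: identify $\smooth{K}^\infty_\R \ptensor \smooth{A}_\theta$ with the iterated smooth crossed product via Proposition \ref{prop:smooth-alg-pres}, use stability of $\HP^*$ under $\cptops^\infty_\R \ptensor -$ and the Elliott--Natsume--Nest cyclic Thom isomorphism twice, and observe that each step is transpose to the corresponding $K$-theoretic step of Corollary \ref{cor:k-group-isom}. The paper's own proof is just a more compressed version of this argument, citing ENN Theorems 6.2 and 4.3 for the two key ingredients.
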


\begin{proof}
On the one hand, by Elliott-Natsume-Nest \cite{MR945014} Theorem 6.2, one has natural isomorphisms
 \[
 \HP^*(\R \ltimes_{\widehat{\sigma^{(1)}},\sigma^{(2)}_{\theta t}} \R \ltimes_{\sigma^{(1)}} \smooth{A}) \simeq \HP^{*+1}(\R \ltimes_{\sigma^{(1)}} \smooth{A}) \simeq \HP^*(\smooth{A})
 \]
 compatible with the Connes-Thom isomorphism $K_*(\R \ltimes_{\widehat{\sigma^{(1)}},\sigma^{(2)}_{\theta t}} \R \ltimes_{\sigma^{(1)}} A) \simeq K_*(A)$.  On the other hand, by Proposition \ref{prop:smooth-alg-pres}, one has $ \HP^*(\R \ltimes_{\widehat{\sigma^{(1)}},\sigma^{(2)}_{\theta t}} \R \ltimes_{\sigma^{(1)}} \smooth{A}) \simeq \HP^*(\smooth{A}_\theta)$ (by the stability of $\HP^*$ for $\cptops^\infty_\R \ptensor -$, loc. cit. Theorem 4.3).  Combining these and remembering that the isomorphisms involved are transpose to the ones between $K$-groups of the $C^*$-algebraic completions, we have the assertion.
\end{proof}

\begin{rmk}
 Although we need the above continuous crossed product presentation of $\cptops^\infty_\R \ptensor \smooth{A}_\theta$ later in order to investigate the deformation of cyclic cocycles, the periodic cyclic cohomology isomorphism of Corollary \ref{prop:per-cyclic-isom} itself can be deduced from the algebra isomorphism of Proposition \ref{prop:smooth-alg-pres-z-t} and the result of \cite{MR961899}.
\end{rmk}

\section{Preservation of dimension spectrum under deformation}
We keep the notations $\smooth{A}, H, D, \sigma$ and $U$ as in the previous sections.  In general the regularity of the spectral triple is not guaranteed to be preserved under a deformation as in the case of Podle\'{s} sphere \cite{MR2117628}.  We show that in the case of Connes-Landi deformation, the regularity is well preserved.  As in the previous section consider a spectral triple $(\smooth{A}, H, D)$ endowed with an action of $\T^2$ satisfying the condition (\ref{eq:cov-rep-cl}) and we assume that $\smooth{A} = \smooth{A}^\infty$.

Let $\smooth{B}$ be the algebra generated by $\delta^k(a)$ for $a \in \smooth{A} + [D, \smooth{A}]$ and $k \in \N$.  Similarly, let $\smooth{B}_\theta$ denote the algebra generated by $\delta^k(\smooth{A}_\theta + [D, \smooth{A}_\theta])$ for $k \in \N$.

Now suppose that $D$ is $n$-summable.  Given a bounded operator $T$ on $H$, the function
\[
 \zeta_T(s) = \Tr(T \absolute{D}^{-s})
\]
is called the zeta function associated to $T$.  This is a priori holomorphic in the region $\ensemble{ z \in \C \suchthat \Re(z) > n}$. If the functions of the form $\zeta_T$ for $T \in \smooth{B}$ admit meromorphic extension to the whole plane $\C$, the dimension spectrum of $(\smooth{A}, H, D)$ is defined to be the collection of the polls of the analytic continuation of the functions of the form $\zeta_T(s)$ for $T \in \smooth{B}$ \cite{MR1334867}.

\begin{thm}
 Let $(\smooth{A}, H, D)$ be a regular spectral triple whose zeta functions $\zeta_T$ for $T \in \smooth{B}$ admit meromorphic extensions to $\C$.  Suppose that there is a smooth action $\sigma$ by $\T^2$ which is spatially implemented by a strongly continuous unitary representation $U_*$ on $H$ and satisfies $\smooth{A} = \smooth{A}^\infty$.  The  dimension spectrum of the spectral triple $(\smooth{A}_\theta, H, D)$ is equal to that of $(\smooth{A}, H, D)$.
\end{thm}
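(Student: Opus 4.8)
The plan is to reduce the statement to a computation of how the zeta functions $\zeta_T$ behave when we pass from $T\in\smooth{B}$ to its deformed counterpart $T^{(\theta)}$, and to observe that the deformation does not change the relevant residues, only possibly permutes them within the same set. The key point is that the algebras $\smooth{B}$ and $\smooth{B}_\theta$ are related termwise through the $\T^2$-weight decomposition exactly as $\smooth{A}$ and $\smooth{A}_\theta$ are, and that $\absolute{D}$ is $\T^2$-invariant by \eqref{eq:cov-rep-cl}, so that $\absolute{D}^{-s}$ commutes with each $U_t$ and preserves each weight space $H_{(m,n)}$.

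First I would verify that $\smooth{B}_\theta$ is generated, as an algebra, by operators of the form $b^{(\theta)}$ with $b\in\smooth{B}$; this uses Lemma \ref{lem:fin-supp-regular} together with the identity $\delta^k([D,a^{(\theta)}]) = \delta^k([D,a])^{(\theta)}$ (which follows because $D$, hence $\delta$, is $\T^2$-equivariant) and the multiplicativity relation $T^{(\theta)}S^{(\theta)} = e^{\pi i(mn'-m'n)\theta}(TS)^{(\theta)}$ for homogeneous $T,S$, which shows that a product $b_1^{(\theta)}\cdots b_r^{(\theta)}$ is, up to the convergent weight-by-weight rescaling of Lemma \ref{lem:part-sum-conv}, of the form $(b_1\cdots b_r)^{(\theta)}$ with $b_1\cdots b_r\in\smooth{B}$. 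Second, for a homogeneous $T\in B(H)_{(m,n)}$ I would compute $\Tr(T^{(\theta)}\absolute{D}^{-s})$. Write $T^{(\theta)} = \sum_{(m',n')} e^{\pi i\theta(mn'-m'n)} P_{(m+m',n+n')}\,T\,P_{(m',n')}$ where $P_{(m',n')}$ is the orthogonal projection onto $H_{(m',n')}$; since $\absolute{D}^{-s}$ commutes with every $P_{(m',n')}$, only the diagonal terms survive the trace, and those force $m=n=0$. Thus for $T$ of nonzero weight both $\Tr(T\absolute{D}^{-s})$ and $\Tr(T^{(\theta)}\absolute{D}^{-s})$ vanish identically, while for $T$ of weight $(0,0)$ one has $T^{(\theta)}=T$ outright. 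In either case $\zeta_{T^{(\theta)}} = \zeta_T$ exactly.

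It remains to pass from homogeneous elements to general elements of $\smooth{B}$ and $\smooth{B}_\theta$ and to control the analytic continuations. For a general $b\in\smooth{B}$ one decomposes $b = \sum_{(m,n)} b_{(m,n)}$, and by Lemma \ref{lem:part-sum-conv} (applied to $b$, which lies in the joint smooth domain, so its orbit map is $C^\infty$ hence certainly $C^4$) the series converges absolutely in operator norm; multiplying by $\absolute{D}^{-s}$ on the half-plane $\Re(s)>n$, where $\absolute{D}^{-s}$ is already trace class, and using that $\Tr$ is norm-continuous on trace class operators against a fixed trace-class operator, one gets $\zeta_b(s) = \sum_{(m,n)}\zeta_{b_{(m,n)}}(s) = \zeta_{b_{(0,0)}}(s)$ on that half-plane, since all the off-diagonal zeta functions vanish. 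Hence $\zeta_b = \zeta_{b_{(0,0)}}$, which has a meromorphic continuation by hypothesis, and identically $\zeta_{b^{(\theta)}} = \zeta_{(b^{(\theta)})_{(0,0)}} = \zeta_{b_{(0,0)}} = \zeta_b$. Since every generator of $\smooth{B}_\theta$ is of the form $b^{(\theta)}$ for $b$ a generator of $\smooth{B}$, and conversely, this shows that the families $\ensemble{\zeta_T : T\in\smooth{B}}$ and $\ensemble{\zeta_T : T\in\smooth{B}_\theta}$ coincide, so their sets of poles agree: the dimension spectra are equal.

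The main obstacle I anticipate is the bookkeeping in the first step — showing that $\smooth{B}_\theta$ really is spanned by $b^{(\theta)}$'s — because one must be careful that the generators of $\smooth{B}_\theta$ are built from $\smooth{A}_\theta+[D,\smooth{A}_\theta]$ whose homogeneous components are the $(a_{(m,n)})^{(\theta)}$ and $([D,a]_{(m,n)})^{(\theta)}$, and then that arbitrary products of such, which a priori involve the extra phases $e^{\pi i\theta(mn'-m'n)}$, reassemble into a single $(\cdot)^{(\theta)}$; this is the same convergence mechanism as in the proof that $\smooth{A}_\theta$ is closed under multiplication, and once it is in place the trace computation is immediate because $\absolute{D}^{-s}$ is diagonal for the $\T^2$-grading. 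A minor additional point is justifying the interchange of $\Tr$ with the infinite weight-sum, which is handled by absolute convergence in operator norm against the fixed trace-class operator $\absolute{D}^{-s}$ on $\Re(s)>n$ and then by uniqueness of meromorphic continuation elsewhere.
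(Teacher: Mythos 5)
Your proposal is correct and follows essentially the same route as the paper's own proof: identify $\smooth{B}_\theta$ with the deformations $T^{(\theta)}$ of elements $T$ of $\smooth{B}$, observe that $\zeta_T$ depends only on the weight-$(0,0)$ component because $\absolute{D}^{-s}$ commutes with the $U_t$, and conclude from $T^{(\theta)}_{(0,0)} = T_{(0,0)}$ that the two families of zeta functions, hence their pole sets, coincide. Your handling of the trace computation and of the convergence of the weight decomposition is, if anything, more explicit than the paper's.
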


\begin{proof}
 The elements of the algebra $\smooth{B}$ satisfy the assumption of Lemma \ref{lem:part-sum-conv}.  Moreover when $a_1, \ldots, a_j$ are elements of $\smooth{A}$ and $k_1, \ldots, k_j$ are positive integers, one has
 \[
\left ( \delta^{k_1}(a_1) \cdots \delta^{k_j}(a_j) \right )^{(\theta)} = \delta^{k_1}(a_1^{(\theta)} ) \cdots \delta^{k_j}(a_j^{(\theta)}).
 \]
There is also an analogous identity involving $[D, a]$ for $a \in \smooth{A}$.  Hence the algebra $\smooth{B}_\theta$ is the collection of the operators of the form $T^{(\theta)}$ for $T \in \smooth{B}$.  Since $D$ commutes with $U_t$, the function $\zeta_T(s)$ only depends on $T_{(0, 0)}$ for any bounded operator $T$ on $H$.  Since we have the equality $T_{(0, 0)}^{(\theta)} = T_{(0, 0)}$, which leads to $\zeta_{T^{(\theta)}}(s) = \zeta_T(s)$.   This shows that   the dimension spectrum of $(\smooth{A}_\theta, H, D)$ is the same as that of $(\smooth{A}, H, D)$.
\end{proof}

\section{Deformation of invariant cyclic cocycles}

In the rest of the paper we investigate the pairing of the character of the spectral triple $(\smooth{A}_\theta, H, D)$ with $K_*(\smooth{A}_\theta)$ for varying $\theta \in \R$ under the natural isomorphisms given by Corollaries \ref{cor:k-group-isom} and \ref{prop:per-cyclic-isom}.  We start with a review of Elliott-Natsume-Nest's isomorphism $\HC^n(\smooth{A}) \rightarrow \HC^{n+1}(\R \ltimes_\sigma \smooth{A}), \phi \mapsto \#_\sigma \phi$  \cite{MR945014}.

Let $\smooth{A}$ be a Fr\'{e}chet *-algebra which is dense and close under holomorphic functional calculus inside a $C^*$-algebra $A$.  Let $\sigma$ be a smooth action of $\R$ on $\smooth{A}$.

Recall that close graded traces on graded differential algebras containing $\smooth{A}$ in the degree $0$ give cyclic cocycles on $\smooth{A}$, and conversely any cyclic $n$-cocycle can be represented as a closed graded trace on the universal differential graded algebra $\Omega(\smooth{A}) = \oplus \Omega(\smooth{A})_n$ (where $\Omega(\smooth{A})_0 = \smooth{A}$ and $\Omega(\smooth{A})_n = \smooth{A}^{\ptensor n} \oplus \smooth{A}^{\ptensor n + 1}$ for $n > 0$).

Suppose that a cyclic $n$-cocycle $\phi$ on $\smooth{A}$ is given by a closed graded trace of degree $n$, which is also denoted by $\phi$ by abuse of notation, on a graded differential algebra $\Omega$ containing $\smooth{A}$ in degree $0$.  Let $\schwfuncs^*\R$ be the convolution algebra of the Schwartz class functions on $\R$, $E$ the direct sum $\Omega(\schwfuncs^*\R)_0 \oplus \Omega(\schwfuncs^*\R)_1$.  We construct a differential graded algebra structure on $\Omega \ptensor E$ containing $\R \ltimes_\sigma \smooth{A}$ in the degree $0$ as follows.

The space $\Omega \ptensor E_0$ is naturally identified to $\R \ltimes \Omega = \schwfuncs^*(\R; \Omega)$.  There is a derivation $d\colon \Omega \ptensor E_0 \rightarrow \Omega \ptensor E$ given by
\[
d(\omega \otimes f) = (d\omega)\otimes f + (-1)^{\deg \omega} \omega \otimes d f
\]
for any homogeneous element $\omega \in \Omega$. On $\Omega \ptensor E_1$, the differential is defined by $d(\omega \otimes f d g) = (d\omega)\otimes f d g$.
The $1$-forms in $\Omega(\schwfuncs^*\R)_1$ act on $\Omega \ptensor E$ by
\[
d f (\omega \otimes g) = d(f(\omega \otimes g)) - f d (\omega \otimes g).
\]
These, together with the original product structure of $\Omega$, defines a structure of a graded differential algebra on $\Omega \ptensor E$.

Then the closed graded trace $\#_\sigma \phi$ on $\Omega_n \ptensor E_1 = \Omega \ptensor \schwfuncs^*(\R) \ptensor \schwfuncs^*(\R)$ corresponding to the cyclic $(n+1)$-cocycle $\#_\sigma \phi$ over $\R \ltimes_\sigma \smooth{A}$ is defined as follows:
\begin{equation}\label{eq:enn-closed-gr-trace-def}
\#_\sigma \phi (f) = 2 \pi i \int_{-\infty}^\infty \int_0^t \phi(\sigma_s f(-t, t)) d s d t.
\end{equation}
When $\phi$ is invariant under $\sigma$, (\ref{eq:enn-closed-gr-trace-def}) reduces to
\begin{equation}\label{eq:invar-phi-sharp}
2 \pi i \int_{-\infty}^\infty t \phi(f(-t, t)) d t.
\end{equation}

This correspondence of $\phi$ to $\#_\sigma \phi$ is compatible with the Connes-Thom isomorphism $\Phi_\sigma\colon K_*A \rightarrow K_{*+1} \R \ltimes_\sigma A$ (\cite{MR945014}, Theorem 6.2). 

Let $\delta$ denote the generator
\[
\delta(a) = \lim_{t \rightarrow 0} \frac{d}{d t} \sigma_t(a)
\]
of $\sigma$.  When $\phi$ is a $\sigma$-invariant cyclic $n$-cocycle
\[
 \phi(f^0, \ldots, f^n) =  \phi(\sigma_t(f^0), \ldots, \sigma_t(f^n)) \quad (\forall t \in \T^2)
\]
 over $\smooth{A}$, we obtain a new $n+1$-cocycle $i_\delta \phi$ (\cite{MR1303779} Chapter 3, Section 6.$\beta$) by
\[
i_\delta \phi(a_0 d a_1 \cdots d a_{n+1}) = \sum_{j=1}^{n+1} (-1)^{j} \phi(a_0 d a_1 \cdots \delta(a_j) \cdots d a_{n+1}).
\]
On the other hand, noting that the convolution algebra of the $\smooth{A}$-valued rapid decay functions $\schwfuncs^*(\R; \smooth{A})$ is identified to $\R \ltimes_\sigma \smooth{A}$, one obtains the dual cocycle $\hat{\phi}$ over  $\R \ltimes_\sigma A$ by
\[
\hat{\phi}(f^0, \ldots, f^n) = \int_{\sum_{j=0}^n t_j=0} \phi(f^0_{t_0}, \sigma_{t_0}(f^1_{t_1}), \ldots, \sigma_{\sum_{j < n} t_j}(f^0_{t_n}))
\]
for $f^j \in \schwfuncs^*(\R; A)$. We also have the dual action $\hat{\sigma}\colon \R \curvearrowright \R \ltimes_\sigma \smooth{A}$ and the isomorphism $\R \ltimes_{\hat{\sigma}} \R \ltimes_\sigma \smooth{A} \simeq \cptops^\infty_\R \ptensor \smooth{A}$, which implies $\HC^k(\R \ltimes_{\hat{\sigma}} \R \ltimes_\sigma \smooth{A}) \simeq \HC^k(\smooth{A})$.  Regarding these constructions one has the following generalization of \cite{MR945014}, Proposition 3.11.

\begin{prop}\label{prop:dual-cocycle}
Let $\phi$ be a $\sigma$-invariant $n$-cocycle over $\smooth{A}$.  The class of the cyclic $(n+1)$-cocycle $i_\delta \phi$ in $\HC^{n+1}(\smooth{A})$ agrees with that of $\#_{\hat\sigma} \hat\phi$.
\end{prop}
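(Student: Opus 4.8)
The plan is to compute $\#_{\hat\sigma}\hat\phi$ explicitly, transport it through the isomorphism $\R\ltimes_{\hat\sigma}\R\ltimes_\sigma\smooth{A}\simeq\cptops^\infty_\R\ptensor\smooth{A}$ set up above and the stability isomorphism $\HC^*(\cptops^\infty_\R\ptensor\smooth{A})\simeq\HC^*(\smooth{A})$, and to check that what comes out represents $i_\delta\phi$ modulo the coboundary of a cyclic $n$-cochain. The starting observation is that, although $\hat\phi$ is a priori merely a cyclic $n$-cocycle on $\R\ltimes_\sigma\smooth{A}$, it is invariant under the dual action $\hat\sigma$: replacing each $f^j$ by $\hat\sigma_s f^j$ in the formula defining $\hat\phi$ multiplies the integrand by $e^{2\pi i s\sum_j t_j}$, which equals $1$ on the support $\sum_j t_j=0$ of the integral. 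Hence $\#_{\hat\sigma}\hat\phi$ is given by the simplified form~(\ref{eq:invar-phi-sharp}): representing $\hat\phi$ by a closed graded trace of degree $n$ on $\Omega(\R\ltimes_\sigma\smooth{A})$, the cyclic $(n+1)$-cocycle $\#_{\hat\sigma}\hat\phi$ is the closed graded trace $F\mapsto 2\pi i\int_\R t\,\hat\phi(F(-t,t))\,dt$ on $\Omega_n\ptensor E_1$ with $\Omega=\Omega(\R\ltimes_\sigma\smooth{A})$. The case $n=0$, where $\phi$ is a $\sigma$-invariant trace, is \cite{MR945014}, Proposition~3.11, so the point is to see that nothing essentially new happens in higher degree.

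Next I would make the two remaining identifications concrete. Fix a minimal projection $e\in\cptops^\infty_\R$ with $\Tr(e)=1$, so that the corner embedding $\iota\colon\smooth{A}\to\cptops^\infty_\R\ptensor\smooth{A}$, $a\mapsto e\otimes a$, induces the stability isomorphism $[\psi]\mapsto[\iota^*\psi]$ in cyclic cohomology (\cite{MR945014}, Theorem~4.3). The isomorphism $\Psi\colon\R\ltimes_{\hat\sigma}\R\ltimes_\sigma\smooth{A}\to\cptops^\infty_\R\ptensor\smooth{A}$ is the (smooth) Takesaki--Takai duality \cite{MR0365160}, which, after a partial Fourier transform in the $\hat\sigma$-variable, is implemented in the $\smooth{A}$-valued integral-kernel picture, with the original action $\sigma$ appearing as a conjugation $\sigma_x$ along the kernel variable $x$. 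As a result $\widetilde a:=\Psi^{-1}(\iota(a))$ depends on $a$ only through $\sigma_{-x}(a)$ integrated against a fixed scalar Schwartz kernel; this is the place where the generator $\delta$ of $\sigma$ will surface once the universal differentials are brought in.

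I would then substitute the universal form $F=\widetilde a_0\,d\widetilde a_1\cdots d\widetilde a_{n+1}$ into the closed graded trace of the first paragraph and expand. Projecting onto the top piece $\Omega_n\ptensor E_1$ isolates, for each $k\in\{1,\dots,n+1\}$, the term in which $d\widetilde a_k$ carries the $E_1$-differential in the $\hat\sigma$-variable and the remaining $d$'s carry the $\Omega(\R\ltimes_\sigma\smooth{A})$-differential; on that term $\hat\phi$ is expanded through its own definition, inserting the twists $\sigma_{t_0},\sigma_{t_0+t_1},\dots$, and the scalar integrals in the $\R$-variables are carried out. Using the $\sigma$-invariance of $\phi$ together with integration by parts in the $\R$-variables, the terms in which a differential meets a scalar factor rather than the conjugation $\sigma_{-x}(a_k)$ attached to $\widetilde a_k$ collect into the Hochschild coboundary $b\psi$ of a cyclic $n$-cochain $\psi$ on $\smooth{A}$, while the term in which each differential differentiates its own conjugation factor produces $\delta(a_k)$ with sign $(-1)^k$, the leftover scalar integral collapsing to $1$ by the normalization $\Tr(e)=1$. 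Altogether $\#_{\hat\sigma}\hat\phi$ represents $\sum_{k=1}^{n+1}(-1)^k\phi(a_0\,da_1\cdots\delta(a_k)\cdots da_{n+1})=i_\delta\phi$ in $\HC^{n+1}(\smooth{A})$.

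The main obstacle is this last step: one has to keep precise track of how the universal differential on $\Omega(\R\ltimes_\sigma\smooth{A})\ptensor E$ interacts simultaneously with the convolution twists inside $\hat\phi$, the twist inside $\#_{\hat\sigma}$, and the conjugations coming from $\Psi$, and to separate the ``diagonal'' $\delta$-contraction from the coboundary terms. For $n=0$ this separation is exactly the computation underlying \cite{MR945014}, Proposition~3.11; for general $n$ one can either push the same change of variables through directly, or argue by induction on $n$, using the cocycle identity for $\phi$ to discard the off-diagonal contributions.
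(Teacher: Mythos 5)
Your opening observation that $\hat\phi$ is $\hat\sigma$-invariant (so that $\#_{\hat\sigma}\hat\phi$ is given by the simplified form \eqref{eq:invar-phi-sharp}) is correct and matches what the paper uses, and your overall plan of transporting $\#_{\hat\sigma}\hat\phi$ through $\R \ltimes_{\hat{\sigma}} \R \ltimes_\sigma \smooth{A} \simeq \cptops^\infty_\R \ptensor \smooth{A}$ and the stability isomorphism is the right frame. The first half of the paper's proof does essentially this: expanding $\omega = (b_0 \otimes f_0)\, d(b_1 \otimes f_1) \cdots d(b_{n+1} \otimes f_{n+1})$ and keeping the $\Omega_n \ptensor E_1$ component, one finds $\#_{\hat\sigma}\hat\phi(\omega) = i_{\tilde\delta}\hat{\hat\phi}(\omega) = i_{\tilde\delta}(\phi \otimes \Tr)(\omega)$, where $\tilde\delta f(t) = t f(t)$ is the generator of the double dual action $\hat{\hat\sigma}$.

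The genuine gap is in your third paragraph. Everything hinges on the claim that the ``off-diagonal'' terms of your expansion ``collect into the Hochschild coboundary $b\psi$ of a cyclic $n$-cochain,'' and you neither carry out this computation nor identify the mechanism that makes it true. The mechanism is this: under Takesaki--Takai duality the double dual action $\hat{\hat\sigma}$ is \emph{not} equal to $\Id_\cptops \otimes \sigma$ but only conjugate to it by a one-parameter unitary $u_t$, so that $\tilde\delta = (\Id \otimes \delta) + [h, \cdot]$ with $h$ the (unbounded multiplier) generator of $u_t$. What must be shown is that $i_{\tilde\delta}(\phi \otimes \Tr)$ and $i_{\Id \otimes \delta}(\phi \otimes \Tr)$ are cohomologous, i.e.\ that the inner perturbation $[h,\cdot]$ contributes only a coboundary. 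The paper proves exactly this by Connes's $2 \times 2$ matrix trick: the action $\Phi(\sigma, u_*)$ on $M_2(\R \ltimes_{\hat\sigma} \R \ltimes_\sigma \smooth{A})$ has a generator whose diagonal corners are $\delta$ and $\tilde\delta$, and since the two corner embeddings $\Psi_1, \Psi_2$ induce the same map on $\HC^*$, the pullbacks $i_\delta\phi \otimes \Tr$ and $i_{\tilde\delta}(\phi \otimes \Tr)$ agree in cohomology. Your proposed alternatives (``push the change of variables through directly'' or ``induction on $n$ using the cocycle identity'') do not supply this step; without isolating the inner-derivation structure of the discrepancy and an argument that $i_{[h,\cdot]}$ of an invariant cocycle is a coboundary (which is what the matrix trick delivers, and which is delicate here because $h$ is an unbounded multiplier rather than an element of the algebra), the proof is incomplete at its central point.
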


\begin{proof}
Let $b_0, \ldots, b_{n+1}$ be elements of $\R \ltimes_\sigma \smooth{A}$ and $f_0, \ldots, f_{n+1}$ be functions in $\schwfuncs^*{\R}$.  For each $0 \le j \le n+1$, $b_j \otimes f_j$ represents the element $t \mapsto b_j \hat{f_j}(t) \in \schwfuncs(\R; \smooth{A})$.  We consider the element
\begin{equation}\label{eq:omega-form-def}
\omega = (b_0 \otimes f_0) d(b_1 \otimes f_1) \cdots d(b_{n+1} \otimes f_{n+1})
\end{equation}
of $\Omega \ptensor E$.  Each term in the above can be expanded as $d(b_j \otimes f_j) = d b_j \otimes f_j + b_j \otimes d f_j$.  Since the terms containing more than one $d f_j$'s vanish and the only terms contributing to the pairing of $\omega$ with $\#_{\hat{\sigma}} \hat{\phi}$ are the elements in $\Omega_n \ptensor E_1$.  Hence $\#_{\hat{\sigma}} \hat{\phi}(\omega)$ can be expressed as
\begin{equation}\label{eq:sharp-phi-sum}
\sum_{j=1}^{n+1} 2 \pi i \int_{s_{n+1}=0} t_j \hat{\phi}(\eta_j(t_0, \ldots, t_{n+1})) \xi_j(t_0, \ldots, t_{n+1}) d t_0 \cdots d t_n,
\end{equation}
where 
\begin{equation*}\begin{split}
\eta_j(t_0, \ldots, t_{n+1}) &= b_0 \hat{\sigma}_{s_0}(d b_1) \cdots \hat{\sigma}_{s_{j-1}}(b_j) \cdots \hat{\sigma}_{s_n}(d b_{n+1}),\\
\xi_j(t_0, \ldots, t_{n+1}) &= f_0(t_0) \cdots \check{f_j} \cdots f_{n+1}(t_{n+1}) d f_j(t_j),\\
s_k &= t_0 + \cdots + t_k.
\end{split}\end{equation*}

The derivation $\tilde{\delta} f(t) = t f(t)$ on $\schwfuncs^*(\R; \R \ltimes \smooth{A})$ is the generator of the double dual action $\hat{\hat{\sigma}}\colon \R \curvearrowright \R \ltimes_{\hat{\sigma}} \R \ltimes_\sigma \smooth{A}$.  The $j$-th term of (\ref{eq:sharp-phi-sum}) is equal to
\[
\hat{\hat{\phi}}(b_0 \otimes f_0) d(b_1 \otimes f_1) \cdots \tilde{\delta}(b_j \otimes f_j) \cdots d(b_{n+1} \otimes f_{n+1}).
\]
Collecting these terms we obtain $\#_\sigma \hat{\phi}(\omega) = i_{\tilde{\delta}} \hat{\hat{\phi}}(\omega) =  i_{\tilde{\delta}}(\phi \otimes \Tr)(\omega)$.

There is a one-parameter unitary $u_t$ such that the double dual action $\hat{\hat{\sigma}}$ is conjugated to $\Id_\cptops \otimes \sigma$ on $\cptops^\infty_\R \ptensor \smooth{A} \simeq \R \ltimes_{\hat{\sigma}} \R \ltimes_\sigma \smooth{A}$ by the formula $\hat{\hat{\sigma}}_t = \Ad_{u_t} \circ (\Id_{\cptops} \sigma_t)$.  By Connes's $2 \times 2$-matrix trick, we obtain an action $\Phi(\sigma, u_*)$ of $\R$ on $M_2( \R \ltimes_{\hat{\sigma}} \R \ltimes_\sigma \smooth{A})$ by
\[
\Phi(\sigma, u_*)_t \left( \left[\begin{array}{cc}x_{11} & x_{12} \\x_{21} & x_{22}\end{array}\right] \right)
=  \left[\begin{array}{cc}\sigma_t(x_{11}) & \sigma_t(x_{12})u_t^* \\u_t \sigma_t(x_{21}) & \Ad_{u_t} \sigma_t(x_{22})\end{array}\right] .
\]
The generator of this action can be written as
\begin{equation}\label{eq:mat-deriv}
\delta_{\sigma, u_*}\left(\left[\begin{array}{cc}x_{11} & x_{12} \\x_{21} & x_{22}\end{array}\right] \right)
= \left[\begin{array}{cc} \delta(x_{11}) & \delta(x_{12}) - x_{12} h \\ h x_{21} + \delta(x_{21}) & \tilde{\delta}(x_{22})\end{array}\right],
\end{equation}
where $h$ is the generator
\[
h = \lim_{t \rightarrow 0} \frac{u_t - 1}{t}
\]
of $u_t$ which is in the multiplier algebra of $ \R \ltimes_{\hat{\sigma}} \R \ltimes_\sigma A$, so that we have $\tilde{\delta}(x) = \delta(x) + [h, x]$.

Now, the derivation $\delta_{\sigma, u_*}$ of (\ref{eq:mat-deriv}) determines a cyclic $n+1$-cocycle $\psi = i_{\delta_{\sigma, u_*}} (\phi \otimes \Tr)$ on $M_2( \R \ltimes_{\hat{\sigma}} \R \ltimes_\sigma \smooth{A})$.  There are two embeddings of $ \R \ltimes_{\hat{\sigma}} \R \ltimes_\sigma \smooth{A}$ into  $M_2( \R \ltimes_{\hat{\sigma}} \R \ltimes_\sigma \smooth{A})$, given by
\begin{align*}
\Psi_1 (x) &= \left[\begin{array}{cc} x & 0 \\ 0 & 0 \end{array}\right], & 
\Psi_2 (x) &= \left[\begin{array}{cc} 0 & 0 \\ 0 & x \end{array}\right].
\end{align*}
The pullback $\HC^*(M_2( \R \ltimes_{\hat{\sigma}} \R \ltimes_\sigma \smooth{A})) \rightarrow \HC^*( \R \ltimes_{\hat{\sigma}} \R \ltimes_\sigma \smooth{A})$ by these two homomorphisms in cyclic cohomology become the same map.  But the pullback of $\psi$ by $\Psi_1$ is $i_\delta \phi \otimes \Tr$ while the one by $\Psi_2$ is $i_{\tilde{\delta}} (\phi \otimes \Tr)$, hence these two cocycles determine the same class in the cyclic cohomology group, which implies $i_\delta \phi = \#_\sigma \hat{\phi}$ in $\HC^*(\R \ltimes_{\hat{\sigma}} \R \ltimes_\sigma \smooth{A})$.
\end{proof}

Now we consider an even regular spectral triple $(\smooth{A}, H, D)$ with an action of $\T^2$ satisfying $\smooth{A}^\infty = \smooth{A}$ as in the previous sections.  Note that we have the estimates
\[
\nu_{l, \alpha}(\frac{d^k}{d t^k} \sigma^{(i)}_t(a)) = \nu_{l, \alpha'}(a)
\]
for $k \in \N$ and $\alpha \in \N^2$, where $\alpha' = (\alpha_1 + k, \alpha_2)$ or $\alpha' = (\alpha_1, \alpha_2 + k)$ corresponding to the cases $i = 1, 2$.  Hence the actions $\sigma^{(i)}$ are smooth action on $\smooth{A}^\infty = \smooth{A}$ in the sense above.

Let $\phi$ be a cyclic $n$-cocycle on $\smooth{A}$ which is invariant under $\sigma$.  Then we obtain the dual cocycle $\hat{\phi}$ over the crossed product $\T \ltimes_{\sigma^{(1)}} \smooth{A}$.  Then again, $\hat{\phi}$ is invariant under the action $(\widehat{\sigma^{(1)}},\sigma^{(2)}_{\theta})$.  Then we obtain a cocycle $\hat{\hat{\phi}}$ on $\Z \ltimes_{\widehat{\sigma^{(1)}},\sigma^{(2)}_{\theta}} \T_{\sigma^{(1)}} \ltimes \smooth{A}$, and with a similar process another cyclic cocycle on $\R \ltimes_{\widehat{\sigma^{(1)}},\sigma^{(2)}_{\theta t}} \R \ltimes_{\sigma^{(1)}} \smooth{A}$, which we still denote by $\hat{\hat{\phi}}$.  The cocycle $\hat{\hat \phi}$ induces a one $\phi^{(\theta)}$ on $\smooth{A}_\theta$ via the embedding given into a corner of
\[
\cptops^\infty \ptensor \smooth{A}_\theta \simeq \Z \ltimes_{\widehat{\sigma^{(1)}},\sigma^{(2)}_{\theta}} \T_{\sigma^{(1)}} \ltimes \smooth{A}
\]
by Proposition \ref{prop:smooth-alg-pres-z-t}.  We record the formula for $\phi^{(\theta)}$ for $a \in \smooth{A}$:
 \begin{multline}\label{eq:inv-cocycle-twisted-over-deform}
  \phi^{(\theta)}(a_0^{(\theta)}, \ldots, a_n^{(\theta)}) \\
   = \sum_{m_0+\cdots + m_n = 0, l_0 + \cdots + l_n =0} \phi((a_0)_{m_0, n_0}, b_{(1, m_1, l_1)}, \ldots, b_{(n, m_n, l_n)}),
\end{multline}
where $b_{(k, m_k, l_k)} = e^{2 \pi i \theta (\sum_{j < k} m_j) l_k} (a_k)_{m_k, l_k}$.  In the particular case of $n = 0$, $\phi$ is given by a trace on $\smooth{A}$ and $\phi^{(\theta)}$ is given by the corresponding trace $\phi^{(\theta)}(a^{(\theta)}) = \phi(a_{(0, 0)}) = \phi(a)$.

\begin{thm}\label{prop:double-dual-cocycle-theta-deform}
 Let $\phi$ be a $\sigma$-invariant cyclic $n$-cocycle on $\smooth{A}$.  Then the cyclic $n$-cocycle $\phi^{(\theta)}$ on $\smooth{A}_\theta$ corresponds to the nonhomogeneous cyclic cocycle
 \begin{equation}\label{eq:deformed-cocycle-corr-orig-alg}
 \phi + \theta i_{\delta^{(1)}} i_{\delta^{(2)} } \phi
\end{equation}
 on $\smooth{A}$ under the natural isomorphism of Corollary \ref{prop:per-cyclic-isom}.
\end{thm}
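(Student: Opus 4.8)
The plan is to unwind both the isomorphism $\Phi_\theta$ of Corollary \ref{prop:per-cyclic-isom} and the definition of $\phi^{(\theta)}$ into the two elementary operations at play — passing to a dual cocycle and the Elliott--Natsume--Nest sharp construction — and to use Proposition \ref{prop:dual-cocycle} to convert between them. After the identification $\cptops^\infty_\R \ptensor \smooth{A}_\theta \simeq \R \ltimes_{(\widehat{\sigma^{(1)}},\sigma^{(2)}_{\theta t})} \R \ltimes_{\sigma^{(1)}} \smooth{A}$ of Proposition \ref{prop:smooth-alg-pres} and the stability of $\HP$ under $\cptops^\infty_\R \ptensor(-)$, the cocycle $\phi^{(\theta)}$ is represented by the iterated dual cocycle $\hat{\hat\phi}$, and $\Phi_\theta$ becomes the composite of the two sharp isomorphisms $\#_{\sigma^{(1)}}$ and $\#_{(\widehat{\sigma^{(1)}},\sigma^{(2)}_{\theta t})}$ of \cite{MR945014}, Theorem 6.2. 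So the task reduces to computing $\#_{\sigma^{(1)}}^{-1}\#_{(\widehat{\sigma^{(1)}},\sigma^{(2)}_{\theta t})}^{-1}[\hat{\hat\phi}]$ inside $\HP^*(\smooth{A})$.

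First I would dispose of the undeformed part. By Proposition \ref{prop:dual-cocycle} applied to $\sigma^{(1)}$ one has $\#_{\widehat{\sigma^{(1)}}}(\hat\phi) = i_{\delta^{(1)}}\phi$ on $\cptops^\infty_\R \ptensor \smooth{A}$, and combined with the periodicity of the Connes--Thom isomorphism (equivalently, Takesaki-Takai duality realised on $\HP$, which identifies $\#_{\widehat{\sigma^{(1)}}}^{-1}$ with $\#_{\sigma^{(1)}}$ up to stability) this pins down the class of $\hat\phi$. Specializing to $\theta = 0$, the outer action is just $\widehat{\sigma^{(1)}}$, the iterated dual cocycle $\hat{\hat\phi}$ is the $\widehat{\sigma^{(1)}}$-dual of $\hat\phi$, and a further application of Takesaki-Takai duality for cyclic cohomology (double dual cocycle $=$ original, up to stability) gives $\#_{\sigma^{(1)}}^{-1}\#_{\widehat{\sigma^{(1)}}}^{-1}[\hat{\hat\phi}] = [\phi]$, in accordance with $\Phi_0 = \mathrm{id}$ and $\phi^{(0)} = \phi$.

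The core is the $\theta$-dependence. The outer action $\rho = (\widehat{\sigma^{(1)}},\sigma^{(2)}_{\theta t})$ on $\R \ltimes_{\sigma^{(1)}} \smooth{A}$ is the product of the dual action $\widehat{\sigma^{(1)}}$ with the canonical extension of $t \mapsto \sigma^{(2)}_{\theta t}$, whose generator is $\theta \delta^{(2)}$. I would use the untwisting isomorphism $\beta_\theta\colon f_s \mapsto \sigma^{(2)}_{\theta s}(f_s)$ from the proof of Proposition \ref{prop:cl-deform-continu-cprod}, which conjugates the $\widehat{\sigma^{(1)}}$-flow to $\rho$ and hence identifies $\R \ltimes_\rho \R \ltimes_{\sigma^{(1)}} \smooth{A}$ with $\R \ltimes_{\widehat{\sigma^{(1)}}} \R \ltimes_{\sigma^{(1)}} \smooth{A}$, carrying $\hat{\hat\phi}$ to the $\widehat{\sigma^{(1)}}$-dual cocycle of $(\beta_\theta^{-1})^*\hat\phi$; writing out $(\beta_\theta^{-1})^*\hat\phi$ by the dual-cocycle formula and using the $\sigma^{(2)}$-invariance of $\phi$ shows it is obtained from $\hat\phi$ by slotwise insertion of the operators $\sigma^{(2)}_{\theta s}$. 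Because the torus action is abelian, $[\delta^{(1)},\delta^{(2)}] = 0$, so the contractions $i_{\delta^{(1)}}$ and $i_{\delta^{(2)}}$ anticommute and $i_{\delta^{(2)}}^2 = 0$ on the cyclic complex, and this forces the resulting exponential in $\theta$ to collapse after the linear term. Carrying that linear term through the two inverse sharp maps — here invoking Connes's $2 \times 2$-matrix trick as in the proof of Proposition \ref{prop:dual-cocycle}, now with the multiplier implementing $\theta\delta^{(2)}$ — yields the correction $\theta\, i_{\delta^{(1)}} i_{\delta^{(2)}}\phi$, hence $\Phi_\theta^{-1}[\phi^{(\theta)}] = [\phi + \theta\, i_{\delta^{(1)}} i_{\delta^{(2)}}\phi]$. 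As an independent check one may expand the phase $e^{2\pi i \theta (\sum_{j<k} m_j) l_k}$ in the explicit formula (\ref{eq:inv-cocycle-twisted-over-deform}): the constant term recovers $\phi$ (since $\sigma$-invariance already imposes $\sum m_j = \sum l_j = 0$), and the linear term, after the Connes--Thom transgression, matches $i_{\delta^{(1)}} i_{\delta^{(2)}}\phi$.

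I expect the decisive difficulty to be the bookkeeping in this last step: tracking, through the two successive crossed products and the Takai identifications, exactly which contraction survives ($i_{\delta^{(1)}} i_{\delta^{(2)}}$ versus $i_{\delta^{(2)}} i_{\delta^{(1)}}$, and up to coboundary), and arguing that the dependence on $\theta$ is genuinely affine rather than merely affine modulo $\HP$-coboundaries of uncontrolled size. Organising the computation around the $2\times 2$-matrix trick together with the Cartan-type homotopy identity relating the Lie derivative $L_{\delta^{(2)}}$ to $i_{\delta^{(2)}}$ on the $(b,B)$-bicomplex is, I think, the way to make the collapse of the higher-order terms transparent.
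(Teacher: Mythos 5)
Your reduction is the same as the paper's: represent $\phi^{(\theta)}$ by $\hat{\hat\phi}$ on $\R \ltimes_{(\widehat{\sigma^{(1)}},\sigma^{(2)}_{\theta t})} \R \ltimes_{\sigma^{(1)}} \smooth{A}$ and descend through the two Elliott--Natsume--Nest isomorphisms by means of Proposition \ref{prop:dual-cocycle}. But the core step, where the $\theta$-dependence is extracted, is where your argument breaks. The untwisting map $\beta_\theta\colon f_s \mapsto \sigma^{(2)}_{\theta s}(f_s)$ is an isomorphism of mapping cones equipped with their pointwise product; it is not an automorphism of the convolution algebra $\R \ltimes_{\sigma^{(1)}} \smooth{A}$ (conjugating the twisted convolution by $\beta_\theta$ produces $\sigma^{(2)}_{\theta t}(f_s)$ where the product requires $\sigma^{(2)}_{\theta s}(f_s)$), so ``$(\beta_\theta^{-1})^*\hat\phi$'' and the claim that $\hat{\hat\phi}$ becomes an ordinary $\widehat{\sigma^{(1)}}$-dual cocycle after conjugation do not hold as stated. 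Worse, the mechanism you propose for truncating the expansion --- writing $\sigma^{(2)}_{\theta s} = e^{\theta s \delta^{(2)}}$ slotwise and invoking $i_{\delta^{(2)}}^2 = 0$ --- conflates iterated contractions of the cocycle with powers of the derivation inserted in a single slot: the order-$\theta^2$ terms contain $(\delta^{(2)})^2(a_j)$ in one entry, which is not of the form $i_{\delta^{(2)}}^2(\cdot)$, and in any case $i_\delta^2 = 0$ holds only up to coboundary on the cyclic complex, not as a cochain identity. So the collapse of the higher-order terms is not established, and your own closing paragraph correctly identifies this as the unresolved difficulty.

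The point you are missing is that no expansion and no truncation are needed. You already observe that the generator of $(\widehat{\sigma^{(1)}},\sigma^{(2)}_{\theta t})$ is $\delta^{\hat{(1)}} + \theta\delta^{(2)}$. Proposition \ref{prop:dual-cocycle}, applied to the invariant cocycle $\hat\phi$ on $\R\ltimes_{\sigma^{(1)}}\smooth{A}$ with this generator, sends $\hat{\hat\phi}$ directly to $i_{\delta^{\hat{(1)}} + \theta\delta^{(2)}}\hat\phi$, and since $\delta \mapsto i_\delta$ is linear in $\delta$ this equals $i_{\delta^{\hat{(1)}}}\hat\phi + \theta\, i_{\delta^{(2)}}\hat\phi$ exactly: the affine dependence on $\theta$ is manifest from the outset, because the generator is affine in $\theta$. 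One then finishes by noting that $i_{\delta^{\hat{(1)}}}\hat\phi$ is the image of $\phi$ under $\#_{\sigma^{(1)}}$, while $i_{\delta^{(2)}}\hat\phi = \widehat{i_{\delta^{(2)}}\phi}$ descends to $i_{\delta^{(1)}}i_{\delta^{(2)}}\phi$ by a second application of Proposition \ref{prop:dual-cocycle}. This is the paper's proof; your proposal contains all of its ingredients but does not assemble them, and the replacement argument for the crucial step does not close.
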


\begin{proof}
It is enough to check that the cyclic $n$-cocycle $\hat{\hat \phi}$ on $\R \ltimes_{\widehat{\sigma^{(1)}},\sigma^{(2)}_{\theta t}} \R \ltimes_{\sigma^{(1)}} \smooth{A}$ corresponds to the one of (\ref{eq:deformed-cocycle-corr-orig-alg}) on $\smooth{A}$ via the natural isomorphism in $\HP^*$.  The action $(\hat{\sigma}_1,\sigma^{(2)}_{\theta t})$ of $\R$ on $\R \ltimes_{\sigma^{(1)}} A$ is generated by the derivation
 \[
 \delta^{(\theta)}(f)_s = s f_s + \theta \frac{d}{d t_2} \sigma_{(0, t_2)}(f_s).
 \]
Let $\delta^{\hat{(1)}}$ denote the generator of the dual action $\widehat{\sigma^{(1)}}$ and $\delta^{(2)}$ that of the action $\sigma^{(2)}$.  The equation above shows that $ \delta^{(\theta)} =\widehat{\sigma^{(1)}} + \theta \delta^{(2)}$.  Applying Proposition \ref{prop:dual-cocycle}, the cocycle $\hat{\hat{\phi}}$ corresponds to the cyclic cocycle
\[
i_{\delta^{(\theta)}} \hat{\phi} = i_{\delta^{\hat{(1)}}} \hat{\phi} + \theta i_{\delta^{(2)}} \hat{\phi}
\]
on $\R \ltimes_{\sigma^{(1)}} \smooth{A}$.

The first term $i_{\delta^{\hat{(1)}}} \hat{\phi}$ in the right hand side of the above is the cocycle which corresponds to $\phi$ under the isomorphism $\HP^*(\smooth{A}) \simeq \HP^{*+1}(\R \ltimes_{\sigma^{(1)}} \smooth{A})$.  On the other hand, one has $i_{\delta^{(2)}} \hat{\phi} = \widehat{ i_{\delta^{(2)}} \phi}$ for the second term.  This $n+1$-cocycle on $\R \ltimes_{\sigma^{(1)}} \smooth{A}$ corresponds to the $n+2$-cocycle $i_{\delta^{(1)}} i_{\delta^{(2)}} \phi$ on $\smooth{A}$ again by Proposition \ref{prop:dual-cocycle}.  Combining these two, one obtains the assertion of the Proposition.
\end{proof}

\begin{example}
 Let $M$ be a compact smooth manifold, endowed with a smooth action $\sigma$ of $\T^2$.  Then $M$ admits a Riemannian metric which is invariant under $\sigma$.  Then the algebra of the smooth functions on $M$, the de Rham complex $\Omega^*(M)$ graded by the degree of forms and the operator $d + d^*$ densely defined on $H = L^2(\Omega^*(M))$ and the induced representation of $\T^2$ on $H$ satisfies the assumption of this paper.
 
 The volume form $d v$ on $M$ is invariant under $\sigma$ and it defines an invariant trace $\tau\colon f \mapsto \int_M f d v$ on $C(M)$.  Let $X_i$ denote the vector fields on $M$ generating $\sigma_i$ for $i = 1, 2$.  By the preceding proposition, the map $K_0(C M_\theta) \rightarrow \C$ induced by the trace $\tau^{(\theta)}$ on $C M_\theta$ corresponds to the map $K^0(M) \rightarrow \C$ induced by the current
\[
f + g^0 d g^1 \wedge d g^2 \mapsto \int_M f + \theta ( g^0 \pairing{d g^1}{X_1} \pairing{d g^2}{X_2} -  \pairing{d g^1}{X_2}  \pairing{d g^2}{X_1} ) d v.
\]
When $E$ is a vector bundle over $M$, the above map on the class of $E$ in $K^0(M)$ gives the number
\[
 \int_M \ch(E)\wedge( d v + \theta i_{X_1} i_{X_2} dv ) = \vol(M) \rk(E) + \theta \int_M c_1(E) \wedge i_{X_1} i_{X_2} dv.
\]
 \end{example}

\subsection{Chern-Connes character of deformed triple}\label{subsec:cc-char-deform-triple}

The algebra $\R \ltimes_{\sigma^{(1)}} \smooth{A}$ acts on the Hilbert space $L^2(\R) \htensor H$ in the following way: a function $f$ in $\schwfuncs^*(\R, \smooth{A})$ transforms vectors in $L^2(\R; H) \simeq L^2(\R) \htensor H$ by
\[
 (\pi(f) \xi)_t = \int_\R f_s U_{(s, 0)} \xi_{t-s} d s.
\]

Next we define an action of the algebra $\R \ltimes_{\widehat{\sigma^{(1)}},\sigma^{(2)}_{\theta t}} \R \ltimes_{\sigma^{(1)}} \smooth{A}$ on $L^2(\R; H)$ by
\[
(\hat{\pi}(f) \xi)_t = \int_\R e^{2\pi i s t} (\pi({f^s})U_{(0, \theta s)}\xi)_t d s
\]
for $f \in \schwfuncs^*(\R; \R \ltimes_{\sigma^{(1)}} \smooth{A})$.
  
\begin{lem}
 The correspondence $f \mapsto \hat{\pi}(f)$ defines a representation of $\R \ltimes_{\widehat{\sigma^{(1)}},\sigma^{(2)}_{\theta t}} \R \ltimes_{\sigma^{(1)}} \smooth{A}$ on $L^2(\R) \htensor H$.
\end{lem}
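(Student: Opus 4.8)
The plan is to verify the representation property directly from the two defining formulas, reducing everything to the already-established fact that $\pi$ is a representation of $\R \ltimes_{\sigma^{(1)}} \smooth{A}$ on $L^2(\R) \htensor H$ and to the covariance relation $U_{(0,\theta s)} \pi(g) U_{(0,\theta s)}^* = \pi(\sigma^{(2)}_{\theta s}(g))$, which is immediate from \eqref{eq:cov-rep-cl}. First I would record that $\hat\pi(f)$ is well-defined and bounded: for $f \in \schwfuncs^*(\R; \R \ltimes_{\sigma^{(1)}} \smooth{A})$ the integrand $s \mapsto e^{2\pi i st}\pi(f^s) U_{(0,\theta s)}$ is a Schwartz family of bounded operators, so the integral converges in the strong topology on $L^2(\R;H)$ and estimating the operator norm by $\int_\R \norm{\pi(f^s)}\, ds < \infty$ gives boundedness. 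One also checks that $\hat\pi$ is linear, which is clear.

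The core computation is multiplicativity. Writing the convolution product on $\R \ltimes_{\widehat{\sigma^{(1)}},\sigma^{(2)}_{\theta t}} \R \ltimes_{\sigma^{(1)}} \smooth{A}$ as $(f * g)^s = \int_\R f^{s_1} * \bigl( (\widehat{\sigma^{(1)}},\sigma^{(2)}_{\theta t})_{s_1}(g^{s - s_1}) \bigr)\, d s_1$, I would compute $\hat\pi(f)\hat\pi(g)$ by inserting the definitions, using the Fubini theorem (justified by the Schwartz decay), and pushing the unitaries $U_{(0,\theta s_1)}$ past $\pi(g^{s-s_1})$ via the covariance relation. The phase factor $e^{2\pi i s_1 t}$ combines with the action of $U_{(s_1,0)}$-translation already built into $\pi$ — in the variable $t$ the operator $\pi(f^{s_1})$ shifts by $s_1$, which is exactly what produces the $e^{2\pi i s_1 \cdot}$ twist in the dual action $\widehat{\sigma^{(1)}}$ applied to $g^{s-s_1}$, while conjugation by $U_{(0,\theta s_1)}$ produces the $\sigma^{(2)}_{\theta s_1}$ twist. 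After collecting terms one recovers $\hat\pi(f*g)$. Finally I would check $\hat\pi(f^*) = \hat\pi(f)^*$ by a change of variables $s \mapsto -s$ together with $U_{(0,\theta s)}^* = U_{(0,-\theta s)}$ and $(\pi(f^s))^* = \pi((f^s)^*)$, matching the involution $(f^*)^s_t = \sigma$-twisted conjugate of $f$; this is the analogue, for the two-fold crossed product, of the corresponding identity for the one-fold one.

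The main obstacle is purely bookkeeping: keeping straight which of the two $\R$-variables carries which twist, and making sure the $e^{2\pi i st}$ phase in the definition of $\hat\pi$ is correctly matched with the $\widehat{\sigma^{(1)}}$-part of the dual action while the $U_{(0,\theta s)}$-conjugation is matched with the $\sigma^{(2)}_{\theta t}$-part. Once the covariance relation $U_{(0,\theta s)} \pi(g) U_{(0,\theta s)}^* = \pi(\sigma^{(2)}_{\theta s}(g))$ and the translation behaviour of $\pi$ in the $\R$-variable are isolated, the computation is mechanical; no convergence issue beyond the trivial Schwartz estimates arises because everything is built from absolutely convergent integrals of bounded operators.
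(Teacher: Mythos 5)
The paper states this lemma without any proof, so there is nothing of the author's to compare against; your strategy is certainly the intended one. You correctly reduce everything to recognizing $\hat{\pi}$ as the integrated form of a covariant pair: the multiplication operator $M_s \colon \xi_t \mapsto e^{2\pi i s t}\xi_t$ implements the dual action $\widehat{\sigma^{(1)}}$ on $\pi(\R \ltimes_{\sigma^{(1)}} \smooth{A})$, the fiberwise unitary $1 \otimes U_{(0,\theta s)}$ implements $\sigma^{(2)}_{\theta s}$ by \eqref{eq:cov-rep-cl}, and $V_s = M_s (1 \otimes U_{(0,\theta s)})$ is a one-parameter unitary group implementing $\beta_s := (\widehat{\sigma^{(1)}},\sigma^{(2)}_{\theta t})_s$. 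The boundedness and adjoint checks are routine, as you say.

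The step you defer as ``mechanical bookkeeping'' is, however, exactly where the content sits, and your proposal does not carry it out. If you expand $\hat{\pi}(f)\hat{\pi}(g)$ literally from the displayed definition, the phase $e^{2\pi i s t}$ multiplies the value of $\pi(f^s)U_{(0,\theta s)}\xi$ at $t$, i.e.\ it sits to the \emph{left} of the translation built into $\pi(f^s)$; hence $\hat{\pi}(f) = \int \pi(\widehat{\sigma^{(1)}}_s(f^s))\, V_s\, ds$ rather than the standard integrated form $\int \pi(f^s)\, V_s\, ds$. Carrying the convolution computation through, the kernel you obtain is $\int e^{-2\pi i (s-s_1) r_1}\, f^{s_1}_{r_1}\, \sigma_{(r_1,\theta s_1)}(g^{s-s_1}_{r-r_1})\, dr_1\, ds_1$, whereas the left-twisted convolution $(f*g)^s = \int f^{s_1} * \beta_{s_1}(g^{s-s_1})\, ds_1$ used throughout the paper produces the phase $e^{2\pi i s_1 (r - r_1)}$; these do not agree. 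They do agree once the phase is moved inside, i.e.\ once one reads the definition as $(\hat{\pi}(f)\xi)_t = \int (\pi(f^s)(e^{2\pi i s\,\cdot\,} U_{(0,\theta s)}\xi))_t\, ds$. So you should actually perform the computation rather than assert that ``one recovers $\hat{\pi}(f*g)$ after collecting terms'': doing so both completes the proof and forces you to pin down the convention (a harmless but necessary correction to the displayed formula), and it is the only nontrivial point in the lemma.
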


    Similarly, we have a representation of $\Z \ltimes_{\widehat{\sigma^{(1)}},\sigma^{(2)}_{\theta}} \T \ltimes_{\sigma^{(1)}} \smooth{A}$ on $\ell_2 \Z \htensor H$ by
  \[
  z^l a \delta_k \otimes \xi = 
  \begin{cases}
  \delta_k \otimes a \xi & (a\xi \in H_{(m, n)}, l + k - m = 0)\\
  0 & (\text{otherwise})
  \end{cases}
  \]
  and $w \delta_k \otimes \xi = e^{2 \pi i n \theta} \delta_{k + 1} \otimes \xi$ for any $\xi \in H_{(m, n)}$.

In the following we make a more detailed analysis of the strong Morita equivalence given by Proposition \ref{prop:cl-deform-continu-cprod} in relation to the unbounded selfadjoint operator $D$.

\begin{prop}\label{prop:unitary-conj-crossed-tensor-cl}
 There is a unitary operator $U_0$ on $\ell_2(\Z) \htensor H$ satisfying $U_0^* (1 \otimes D) U_0 = 1 \otimes D$ and
\[
U_0^* (\Z \ltimes_{\widehat{\sigma^{(1)}},\sigma^{(2)}_{\theta}} \T \ltimes_{\sigma^{(1)}} A) U_0 = \cptops(\ell_2(\Z)) \otimes \smooth{A}_\theta.
\]
\end{prop}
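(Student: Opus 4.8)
The plan is to produce $U_0$ explicitly by comparing the representation of $\Z\ltimes_{\widehat{\sigma^{(1)}},\sigma^{(2)}_{\theta}}\T\ltimes_{\sigma^{(1)}}A$ on $\ell_2(\Z)\htensor H$ recalled just above the statement (write $\rho$ for it) with the standard representation $k\otimes a^{(\theta)}\mapsto k\otimes a^{(\theta)}$ of $\cptops(\ell_2(\Z))\otimes\smooth{A}_\theta$, and to observe that the $U_0$ arising from this comparison respects the joint $\T^2$-weight grading of $H$; the $\T^2$-equivariance of $D$ will then force $U_0^*(1\otimes D)U_0=1\otimes D$ for free.

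More precisely, I would first pin down where $1\otimes D$ sits. Since $\Ad_{U_t}(D)=D$, the operator $D$ commutes with every $U_t$ and hence preserves each weight space $H_{(m,n)}$; accordingly $1\otimes D$ is decomposable along $\ell_2(\Z)\htensor H=\bigoplus_{(m,n)\in\Z^2}\ell_2(\Z)\otimes H_{(m,n)}$, acting on the $(m,n)$-summand as $1\otimes D|_{H_{(m,n)}}$. It follows that any unitary of $\ell_2(\Z)\htensor H$ assembled from (i) permutations of the $\ell_2(\Z)$-index depending only on the weight $(m,n)$, (ii) scalars depending only on $(m,n)$, and (iii) the operators $1\otimes U_t$ for $t\in\T^2$, commutes with $1\otimes D$. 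The unitary $U_0$ will be of this shape.

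Next I would build $U_0$ from the imprimitivity bimodule $\mathcal{E}$ of Remark \ref{rmk:corresp-cl-deform-cross-prod}. The vectors $\delta_k\otimes 1$ are orthonormal for the $\smooth{A}_\theta$-valued inner product $\pairing{\delta_k a_{(m,n)}}{\delta_l b_{(m',n')}}$ and generate $\mathcal{E}$ over the right $\smooth{A}_\theta$-action, so $\mathcal{E}\simeq\ell_2(\Z)\otimes\smooth{A}_\theta$ as a right Hilbert module; forming the interior tensor product with the defining representation $a^{(\theta)}\mapsto a^{(\theta)}$ of $\smooth{A}_\theta$ on $H$ identifies $\mathcal{E}\otimes_{\smooth{A}_\theta}H$ with $\ell_2(\Z)\htensor H$, carrying the standard representation of $\End^0_{\smooth{A}_\theta}(\mathcal{E})\simeq\cptops(\ell_2(\Z))\otimes\smooth{A}_\theta$ and a representation $\rho_0$ of $\Z\ltimes_{\widehat{\sigma^{(1)}},\sigma^{(2)}_{\theta}}\T\ltimes_{\sigma^{(1)}}A$ coming from the left action on $\mathcal{E}$. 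One then checks, by evaluating on the generators $z^l a$ and $w$, that $\rho$ and $\rho_0$ are intertwined by a unitary $U_0$ — the weight-graded change of basis that absorbs the cocycle phases $e^{2\pi i\theta kn}$ and $e^{\pi i\theta mn}$ built into $\mathcal{E}$, the deformation twists $T\mapsto T^{(\theta)}$, and the factor $U^{(2)}_\theta$ occurring in $\rho(w)$ — which is exactly of the shape singled out in the previous paragraph. Hence $U_0^*(1\otimes D)U_0=1\otimes D$, and since $U_0^*\rho(z^l a)U_0$ and $U_0^*\rho(w)U_0$ land in and span a dense $*$-subalgebra of $\cptops(\ell_2(\Z))\otimes\smooth{A}_\theta$ while a $*$-homomorphism of $C^*$-algebras has closed range, one obtains the asserted equality of operator algebras (working with the $C^*$-completions).

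The genuinely substantive point is the comparison of $\rho$ with $\rho_0$: confirming that the abstract bimodule picture of Remark \ref{rmk:corresp-cl-deform-cross-prod} reproduces, on the nose, the explicit formulas for $\rho$ on $\ell_2(\Z)\htensor H$. This is a bookkeeping of the deformation phases $e^{2\pi i\theta kn}$, $e^{\pi i\theta mn}$, the twists $T\mapsto T^{(\theta)}$, and the $U^{(2)}_\theta$ in $\rho(w)$, propagated through the chain of identifications — routine but error-prone. Once it is carried out, compatibility of $U_0$ with $1\otimes D$ costs nothing, as it is visible from the weight-graded form of $U_0$ established above.
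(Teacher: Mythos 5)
Your proposal is correct and takes essentially the same route as the paper: the paper simply writes the unitary down explicitly, $U_0\,(\delta_k\otimes\xi) = e^{2\pi i\theta nk}\,\delta_{k+m}\otimes\xi$ for $\xi\in H_{(m,n)}$ --- exactly the weight-graded shift-plus-phase you predict, which visibly commutes with $1\otimes D$ --- and then carries out the generator bookkeeping you defer, checking that $U_0^*\,z^l a\,U_0$ is a matrix unit tensored with $a^{(\theta)}$ (up to a phase $e^{2\pi i\theta ql}$) and that $U_0^*\,w\,U_0 = v\otimes 1$. Deriving $U_0$ from the bimodule of Remark \ref{rmk:corresp-cl-deform-cross-prod} instead of guessing the formula is a harmless repackaging; the substantive computation is the same.
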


\begin{proof}
Let $U_0$ denote the unitary operator on $\ell_2 \Z \htensor H$ given by $U_0 \delta_k \otimes \xi = e^{2\pi i \theta n k} \delta_{k+m} \otimes \xi$ for any $k \in \Z$ and $\xi \in H_{(m, n)}$.

When $a \in A_{(p, q)}$ and $\xi \in H_{m, n}$, we have
\begin{equation*}\begin{split}
U_0^* z^l a U_0 \delta_k \otimes \xi &= U_0 z^l a e^{2 \pi i \theta k n} \delta_{k+m} \otimes \xi \\
&=
\begin{cases}
e^{2 \pi i \theta q(p-k) + p n} \delta_{k - p} \otimes a\xi & (l + k - p = 0)\\
0 & (\text{otherwise}).
\end{cases}
\end{split}
\end{equation*}
This is the effect of $e^{2 \pi i \theta q l} e_{-l, -(l+p)} \otimes a^{(\theta)} \in \cptops(\ell_2 \Z) \mintensor A_\theta$ on $\delta_k \otimes \xi$, where $e_{m, n}$ denotes the matrix element $\delta_k \mapsto \delta_{n, k} \delta_m$ on $\ell_2 \Z$ for any $(m, n) \in \Z^2$.

Similarly, one has $U_0^* w U_0 = v \otimes 1$ where $v$ is the unitary operator $\delta_k \mapsto \delta_{k+1}$ on $\ell_2 \Z$.  Hence we have $U_0^* \Z \ltimes_{\widehat{\sigma^{(1)}},\sigma^{(2)}_{\theta}} \T \ltimes_{\sigma^{(1)}} A U_0 \subset \cptops(\ell_2(\Z)) \mintensor A_\theta$.  One can similarly show that $U_0 (\cptops(\ell_2 \Z) \otimes \smooth{A}_\theta) U_0^* \subset \Z \ltimes_{\widehat{\sigma^{(1)}},\sigma^{(2)}_{\theta}} \T \ltimes_{\sigma^{(1)}} A$.
\end{proof}

As a consequence of Proposition \ref{prop:unitary-conj-crossed-tensor-cl} the image under $\hat{\pi}$ of the smooth subalgebra $\Z \ltimes_{\widehat{\sigma^{(1)}},\sigma^{(2)}_{\theta}} \T \ltimes_{\sigma^{(1)}} \smooth{A}$ of $\Z \ltimes_{\widehat{\sigma^{(1)}},\sigma^{(2)}_{\theta}} \T \ltimes_{\sigma^{(1)}} A$ have bounded commutators with $1 \otimes D$.  In particular,
\begin{equation}\label{eq:dfn-spec-triple-twist-crossed-prod}
(\Z \ltimes_{\widehat{\sigma^{(1)}},\sigma^{(2)}_{\theta}} \T \ltimes_{\sigma^{(1)}} \smooth{A}, \ell_2 \Z \htensor H, 1 \otimes D)
\end{equation}
is a spectral triple over $\Z \ltimes_{\widehat{\sigma^{(1)}},\sigma^{(2)}_{\theta}} \T \ltimes_{\sigma^{(1)}} \smooth{A}$.

\begin{cor}
 The character of the spectral triple (\ref{eq:dfn-spec-triple-twist-crossed-prod}) corresponds to the map induced by the character of $D$ over $\smooth{A}_\theta$ via the strong Morita equivalence of Lemma \ref{lem:def-cross-prod-mor-equiv}.
\end{cor}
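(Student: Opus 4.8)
The plan is to move the spectral triple (\ref{eq:dfn-spec-triple-twist-crossed-prod}) across the unitary $U_0$ of Proposition \ref{prop:unitary-conj-crossed-tensor-cl}, so that it becomes the stabilisation of $(\smooth{A}_\theta, H, D)$ by the smooth compact operators on $\ell_2\Z$, and then to observe that passing to a rank one corner changes neither the Chern--Connes character nor the induced map on $K$-theory. Since $U_0$ commutes with $1\otimes D$ and, by Proposition \ref{prop:unitary-conj-crossed-tensor-cl} (and the inclusion of smooth subalgebras contained in its proof), conjugates $\Z \ltimes_{\widehat{\sigma^{(1)}},\sigma^{(2)}_{\theta}} \T \ltimes_{\sigma^{(1)}} \smooth{A}$ onto $\cptops^\infty(\ell_2\Z) \ptensor \smooth{A}_\theta$, the map $\Ad_{U_0}$ gives a unitary equivalence of spectral triples
\[
\bigl(\Z \ltimes_{\widehat{\sigma^{(1)}},\sigma^{(2)}_{\theta}} \T \ltimes_{\sigma^{(1)}} \smooth{A},\ \ell_2\Z \htensor H,\ 1\otimes D\bigr) \;\xrightarrow{\ \sim\ }\; \bigl(\cptops^\infty(\ell_2\Z) \ptensor \smooth{A}_\theta,\ \ell_2\Z \htensor H,\ 1\otimes D\bigr),
\]
hence the two characters correspond under the associated isomorphism in $\HP^*$.

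Next I would use the corner embedding $\iota\colon \smooth{A}_\theta \to \cptops^\infty(\ell_2\Z)\ptensor\smooth{A}_\theta$, $a^{(\theta)} \mapsto e_{0,0}\otimes a^{(\theta)}$. The character of a $K$-cycle pulled back along a homomorphism is the pullback of the character; and in our case $\iota$ pulls the right-hand triple above back to one all of whose operators $\iota(a^{(\theta)})$ and commutators $[1\otimes D,\iota(a^{(\theta)})]$ are supported on $\delta_0\otimes H$, on which $1\otimes D$ restricts to $D$. Thus this pulled-back triple is the direct sum of $(\smooth{A}_\theta, H, D)$ with a degenerate triple whose algebra acts by zero, and the latter contributes nothing, so $\iota^*\ch(1\otimes D) = \ch(D)$ in $\HP^*(\smooth{A}_\theta)$. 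Since $\iota^*\colon \HP^*(\cptops^\infty(\ell_2\Z)\ptensor\smooth{A}_\theta) \to \HP^*(\smooth{A}_\theta)$ is an isomorphism --- the stability of $\HP^*$ under $\cptops^\infty\ptensor(-)$ already invoked in Corollary \ref{prop:per-cyclic-isom} --- it follows that the character of the right-hand triple is the class corresponding to $\ch(D)$ under this stability isomorphism, and therefore the character of (\ref{eq:dfn-spec-triple-twist-crossed-prod}) corresponds to $\ch(D)$ under the composite of $(\Ad_{U_0})^*$ with the inverse of $\iota^*$.

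It remains to see that this composite is the isomorphism induced by the strong Morita equivalence of Lemma \ref{lem:def-cross-prod-mor-equiv}. That equivalence is implemented by the imprimitivity bimodule $\mathcal{E}$ of Remark \ref{rmk:corresp-cl-deform-cross-prod}, for which $\Z \ltimes_{\widehat{\sigma^{(1)}},\sigma^{(2)}_{\theta}} \T \ltimes_{\sigma^{(1)}} A \simeq \End^0_{A_\theta}(\mathcal{E}) \simeq \cptops \otimes A_\theta$; the unitary $U_0$ realises precisely this last isomorphism, and compatibly with the corner inclusion $a^{(\theta)}\mapsto e_{0,0}\otimes a^{(\theta)}$, so that a full corner embedding and the ambient imprimitivity bimodule induce the same map on $K$-theory and on $\HP^*$. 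I expect this final matching to be the main obstacle: one must verify that $\Ad_{U_0}$, restricted to the smooth subalgebras, agrees up to an automorphism acting trivially in $\HP^*$ with the algebra isomorphism of Proposition \ref{prop:smooth-alg-pres-z-t}, and that the stability isomorphism occurring in Corollary \ref{prop:per-cyclic-isom} is the one dual to the rank one projection $e_{0,0}$. The remaining ingredients --- invariance of the Chern--Connes character under unitary equivalence and its naturality under pullback along homomorphisms --- are standard.
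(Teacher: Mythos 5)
Your proposal is correct and follows essentially the same route as the paper: conjugate by the unitary $U_0$ of Proposition \ref{prop:unitary-conj-crossed-tensor-cl} (which fixes $1\otimes D$), identify the resulting triple over $\cptops^\infty\ptensor\smooth{A}_\theta$, and pass through the stability isomorphism of $\HC^*$ to land on $\ch(D)$ over $\smooth{A}_\theta$. The paper's own proof is a three-sentence version of exactly this; your extra detail on the corner embedding and the degenerate summand, and your explicit flagging of the need to match $\Ad_{U_0}$ with the Morita equivalence of Lemma \ref{lem:def-cross-prod-mor-equiv}, only spell out what the paper asserts without comment.
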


\begin{proof}
 The isomorphism of $\R \ltimes_{\widehat{\sigma^{(1)}},\sigma^{(2)}_{\theta t}} \R \ltimes_{\sigma^{(1)}} A$ onto $\cptops^\infty_\R \ptensor \smooth{A}_\theta$ given by $\Ad_U$ is equal to the strong Morita equivalence of Lemma \ref{lem:def-cross-prod-mor-equiv}.  By the $U$-invariance of $1 \otimes D$, its character over $\R \ltimes_{\widehat{\sigma^{(1)}},\sigma^{(2)}_{\theta t}} \R \ltimes_{\sigma^{(1)}} \smooth{A}$ is identified to that over $\cptops^\infty_\R \ptensor \smooth{A}_\theta$.  The latter induces the same class as the character of $D$ in the cyclic cohomology group via the isomorphism $\HC^*(\cptops^\infty_\R \ptensor \smooth{A}_\theta) \simeq \HC^*(\smooth{A}_\theta)$.
\end{proof}

Suppose that $(\smooth{A}, H, D)$ is $n$-summable ($0 < n \in 2 \N$).  Then the character
\[
\ch_D(f_0, \ldots, f_n) = \Tr_s(\gamma f_0 [F, f_1] \cdots [F, f_n]) \quad (F = D \absolute{D}^{-1})
\]
of this triple for is a cyclic $n$-cocycle which is invariant $\sigma$.

\begin{lem}
 The character of the spectral triple (\ref{eq:dfn-spec-triple-twist-crossed-prod}) over $\Z \ltimes_{\widehat{\sigma^{(1)}},\sigma^{(2)}_{\theta}} \T \ltimes_{\sigma^{(1)}} A$ is equal to $\hat{\hat{\ch_D}}$.
\end{lem}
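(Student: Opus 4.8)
The plan is to compute both cocycles explicitly on a dense family of monomials and match them term by term. Write $F = D\absolute{D}^{-1}$, $F' = 1 \otimes F$ and $\gamma' = 1 \otimes \gamma$, so the character of the spectral triple (\ref{eq:dfn-spec-triple-twist-crossed-prod}) is $\Phi(b_0, \dots, b_n) = \Tr_s(\gamma'\,\hat\pi(b_0)\,[F', \hat\pi(b_1)] \cdots [F', \hat\pi(b_n)])$. By Proposition \ref{prop:unitary-conj-crossed-tensor-cl} the algebra $\Z \ltimes_{\widehat{\sigma^{(1)}},\sigma^{(2)}_{\theta}} \T \ltimes_{\sigma^{(1)}} \smooth{A}$ is a rapid-decay matrix algebra over $\smooth{A}_\theta$, so the $n$-summability of $(\smooth{A}, H, D)$ makes $\Phi$ a well-defined continuous cyclic $n$-cocycle, and the finite linear combinations of the monomials $z^{l} a\, w^{p}$ (with $a \in \smooth{A}$ a $U$-eigenvector of weight $(m, n)$) are dense; hence it is enough to compare $\Phi$ and $\hat{\hat{\ch_D}}$ on such monomials.

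The first step is to exploit $\Ad_{U_t}(D) = D$, which gives $\Ad_{U_t}(F) = F$: so $F$ preserves every weight space $H_{(m', n')}$, and $F'$ commutes with the $\ell_2\Z$-tensor leg of $\ell_2\Z \htensor H$. Feeding this into the representation formulas $z^{l}a\,(\delta_k \otimes \xi)$ and $w\,(\delta_k \otimes \xi) = e^{2\pi i n'\theta}\delta_{k+1}\otimes\xi$ for $\xi \in H_{(m',n')}$, one finds $[F', \hat\pi(w)] = 0$ and $[F', \hat\pi(z^{l}a)] = \hat\pi(z^{l}[F, a])$, where $[F, a]$ is again a bounded $U$-eigenvector of weight $(m, n)$. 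Hence $\hat\pi(b_0)\,[F', \hat\pi(b_1)] \cdots [F', \hat\pi(b_n)] = \hat\pi(c)$, where $c$ is the single element of the crossed product $\Z \ltimes_{\widehat{\sigma^{(1)}},\sigma^{(2)}_{\theta}} \T \ltimes_{\sigma^{(1)}} \smooth{B}$ obtained from the product $b_0 b_1 \cdots b_n$ by replacing, for each $j \ge 1$, the $\smooth{A}$-factor $a_j$ of $b_j$ by $[F, a_j]$.

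Next I would evaluate $\Tr_s(\gamma'\hat\pi(c))$ as $\sum_{k \in \Z}$ of the graded traces of the $(k,k)$-blocks of $\hat\pi(c)$ on $H$, expanding $c$ into monomials. Each monomial shifts the $\ell_2\Z$-index by its total $w$-power and the $\T^2$-weight of its operator part by the corresponding translate, so only the monomials whose $w$-powers sum to $0$ reach a diagonal block; and since a graded trace on $H$ kills every operator of nonzero $\T^2$-weight, only those whose operator weights sum to $(0,0)$ survive, whereupon the momentum matching in the formula for $z^{l}a\,(\delta_k \otimes \xi)$ forces the $z$-exponents to sum to $0$ as well. What is left is a sum, over the configurations of weights $(m_j, n_j)$ and exponents $l_j, p_j$ obeying these closure conditions, of an accumulated phase $\prod_j e^{2\pi i \theta (\cdot)}$ coming from the $w$-factors passing $H$-eigenvectors, times $\Tr_s(\gamma\,(a_0)_{(m_0, n_0)}\,[F, (a_1)_{(m_1, n_1)}] \cdots [F, (a_n)_{(m_n, n_n)}])$, i.e. the value of $\ch_D$ on the weight components $(a_j)_{(m_j, n_j)}$. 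Each such summand is trace-class because $\Ad_{U_t}(F) = F$ carries the Schatten bounds for $[F, a_j]$ along the $\T^2$-orbit, and the sum converges by the rapid decay of the components $(a_j)_{(m_j, n_j)}$.

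Finally I would unwind $\hat{\hat{\ch_D}}$ in the same coordinates: the inner dual cocycle $\widehat{\ch_D}$ on $\T \ltimes_{\sigma^{(1)}} \smooth{A}$ evaluates $\ch_D$ on the $\sigma^{(1)}$-twisted Fourier coefficients and retains the total-Fourier-mode-zero part, while the outer dual cocycle $\hat{\hat{\ch_D}}$ on $\Z \ltimes_{\widehat{\sigma^{(1)}},\sigma^{(2)}_{\theta}} \T \ltimes_{\sigma^{(1)}} \smooth{A}$ sums over the $\Z$-labels with the phases produced by the twist $\sigma^{(2)}_{\theta}$, which multiplies an $\smooth{A}$-weight-$(m,n)$ vector by $e^{2\pi i n \theta\,(\mathrm{label})}$ --- the same phase $b_{(k, m_k, l_k)} = e^{2\pi i \theta (\sum_{j<k}m_j) l_k}(a_k)_{m_k, l_k}$ recorded in (\ref{eq:inv-cocycle-twisted-over-deform}). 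On the monomials $z^{l_j} a_j w^{p_j}$ the closure conditions coincide with those from the previous step and the phases coincide, which yields $\Phi = \hat{\hat{\ch_D}}$. I expect the main obstacle to be precisely this last point: one must verify, configuration by configuration, that the $\theta$-phase picked up when a $w$-factor crosses an $H$-eigenvector matches the phase generated by the $\sigma^{(2)}_{\theta}$-twist in the iterated dual cocycle, and that the summation constraints agree --- a finite but error-prone index juggle. Everything else (trace-class convergence, the density reduction to monomials, boundedness of $[F, a]$) is routine given the $n$-summability and the standing regularity hypotheses.
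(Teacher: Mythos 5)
The paper states this lemma without any proof --- there is no \verb|proof| environment attached to it --- so there is nothing internal to compare your argument against; the closest the paper comes is the remark after Corollary \ref{cor:char-sp-triple-continu-crossed-prod}, which says the analogous identity $\ch^{(\theta)}_{(\smooth{A},H,D)}=\ch_{(\smooth{A}_\theta,H,D)}$ follows ``directly from the formula (\ref{eq:inv-cocycle-twisted-over-deform})'', i.e.\ the author evidently has in mind exactly the kind of direct computation you propose. Your preliminary reductions are correct and are the right ingredients: since $\Ad_{U_t}(D)=D$ implies $\Ad_{U_t}(F)=F$, the operator $F'=1\otimes F$ preserves each $\delta_k\otimes H_{(m',n')}$, which gives $[F',\hat\pi(w)]=0$ and $[F',\hat\pi(z^l a)]=\hat\pi(z^l[F,a])$ with $[F,a]$ of the same weight as $a$; and the supertrace does kill every operator of nonzero total $\T^2$-weight. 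One point worth making explicit, because a naive reading suggests trouble: $1\otimes D$ does not have compact resolvent on $\ell_2\Z\htensor H$, so one might fear the sum over diagonal $(k,k)$-blocks diverges. It does not, because the momentum constraint $l+k-m=0$ in the representation means the $(k,k)$-block of a total-weight-zero, total-shift-zero monomial acts only on the stratum $\bigoplus_{n'}H_{(L+k,n')}$; summing over $k$ reassembles $\Tr_s(\gamma\,c)$ on all of $H$ exactly once rather than repeating it.

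The genuine gap is the one you flag yourself: the decisive step --- verifying configuration by configuration that the phase $e^{2\pi i\theta n p}$ accumulated when $w^{p}$ crosses an $H$-eigenvector of weight $(m,n)$ reproduces the phases $e^{2\pi i\theta(\sum_{j<k}m_j)l_k}$ of the iterated dual cocycle, and that the two sets of closure conditions ($\sum p_j=0$, $\sum l_j=0$, $\sum(m_j,n_j)=(0,0)$) coincide --- is announced but not carried out. As written this is a proof plan with a correct skeleton, not a proof; everything hinges on that finite but delicate index computation, which is precisely where a sign or an off-by-one in the twisting convention $(\widehat{\sigma^{(1)}},\sigma^{(2)}_\theta)$ would hide. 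You should carry it out at least for $n=0$ and $n=2$ on monomials before claiming the general case, and you should also record why it suffices to test on monomials (rapid decay of the weight components from Lemma \ref{lem:part-sum-conv} plus joint continuity of both cocycles in the Fr\'echet topology).
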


\begin{cor}\label{cor:char-sp-triple-continu-crossed-prod}
 Under the strong Morita equivalence of Proposition \ref{prop:cl-deform-continu-cprod}, the character of $D$ over $\smooth{A}_\theta$ corresponds to $\hat{\hat{\ch_D}}$ over $\R \ltimes_{\widehat{\sigma^{(1)}},\sigma^{(2)}_{\theta t}} \R \ltimes_{\sigma^{(1)}} \smooth{A}$.
\end{cor}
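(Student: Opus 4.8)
The plan is to combine the two preceding results with the passage from the discrete to the continuous crossed‐product presentation. By the Lemma immediately above, the character of the spectral triple (\ref{eq:dfn-spec-triple-twist-crossed-prod}) over $\Z \ltimes_{\widehat{\sigma^{(1)}},\sigma^{(2)}_{\theta}} \T \ltimes_{\sigma^{(1)}} \smooth{A}$ is the iterated dual cocycle $\hat{\hat{\ch_D}}$, while by the Corollary stated just before that Lemma this same character corresponds, under the strong Morita equivalence of Lemma \ref{lem:def-cross-prod-mor-equiv}, to the character of $D$ over $\smooth{A}_\theta$. Hence the character of $D$ over $\smooth{A}_\theta$ is carried by that Morita equivalence to $\hat{\hat{\ch_D}}$ on the discrete crossed product, and it remains only to transport this identification along the identifications of Proposition \ref{prop:cl-deform-continu-cprod}.

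Next I would recall, from the proof of Proposition \ref{prop:cl-deform-continu-cprod}, that $\R \ltimes_{\sigma^{(1)}} \smooth{A}$ is realized as the mapping cone of $\widehat{\sigma^{(1)}}$ on $\T \ltimes_{\sigma^{(1)}} \smooth{A}$ with $\widehat{\sigma^{(1)}}$ acting as the suspension flow, and that after the untwisting isomorphism $\R \ltimes_{\widehat{\sigma^{(1)}},\sigma^{(2)}_{\theta t}} \R \ltimes_{\sigma^{(1)}} \smooth{A}$ is the crossed product of this mapping cone by the suspension flow. The dual cocycle construction $\phi \mapsto \hat\phi$ of \cite{MR945014}, together with the $\HP^*$‐isomorphisms it induces, is compatible with the mapping‐cone/suspension strong Morita equivalence and with the Connes–Thom isomorphism on $K$‐theory by \cite{MR945014}, Theorem 6.2 and Proposition \ref{prop:dual-cocycle}, and the analogous compatibility holds for the iterated construction. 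Consequently $\hat{\hat{\ch_D}}$ on $\Z \ltimes_{\widehat{\sigma^{(1)}},\sigma^{(2)}_{\theta}} \T \ltimes_{\sigma^{(1)}} \smooth{A}$ is carried to $\hat{\hat{\ch_D}}$ on $\R \ltimes_{\widehat{\sigma^{(1)}},\sigma^{(2)}_{\theta t}} \R \ltimes_{\sigma^{(1)}} \smooth{A}$; since the Morita equivalences of Lemma \ref{lem:def-cross-prod-mor-equiv} and of Proposition \ref{prop:cl-deform-continu-cprod} are both implemented by $\Ad$ of the unitary used in the proof of the preceding Corollary (compare Proposition \ref{prop:unitary-conj-crossed-tensor-cl}), combining this with the previous paragraph yields the assertion.

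I expect the main obstacle to be the bookkeeping in the previous paragraph: one must check that the two successive dualizations are intertwined by the stabilization isomorphism $\cptops^\infty \ptensor \smooth{A}_\theta \simeq \cptops^\infty_\R \ptensor \smooth{A}_\theta$ underlying Propositions \ref{prop:smooth-alg-pres-z-t} and \ref{prop:smooth-alg-pres}, and not by some nontrivial twist of $\hat{\hat{\ch_D}}$. An alternative route that sidesteps this is to repeat the computation of the preceding Lemma directly on $L^2(\R) \htensor H$ using the representation $\hat\pi$: expanding $\Tr_s(\gamma\, \hat\pi(f^0)[1 \otimes F, \hat\pi(f^1)] \cdots [1 \otimes F, \hat\pi(f^n)])$ by means of the Schwartz‐kernel description of $\hat\pi$ reproduces exactly the defining integral of $\hat{\hat{\ch_D}}$, the convergence being controlled by the $n$‐summability of $D$ and the rapid decay of the kernels involved; the identification $\HC^*(\cptops^\infty_\R \ptensor \smooth{A}_\theta) \simeq \HC^*(\smooth{A}_\theta)$ of \cite{MR945014} then exhibits $\hat{\hat{\ch_D}}$ as the image of the character of $D$ over $\smooth{A}_\theta$.
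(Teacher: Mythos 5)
Your proposal is correct and is essentially the paper's own reasoning: the paper states this corollary without a separate proof, as the immediate combination of the preceding Lemma (the character of the triple \eqref{eq:dfn-spec-triple-twist-crossed-prod} is $\hat{\hat{\ch_D}}$) with the preceding Corollary (that character corresponds to the character of $D$ over $\smooth{A}_\theta$ under the Morita equivalence), which is exactly your first paragraph. Your extra care about passing from the discrete crossed product $\Z \ltimes \T \ltimes \smooth{A}$ to the continuous one $\R \ltimes \R \ltimes \smooth{A}$ is warranted — the paper is loose here, since the earlier Corollary is stated for the discrete presentation while its proof already works with $\Ad_U$ on the continuous one — and your alternative route (computing $\Tr_s(\gamma\,\hat{\pi}(f^0)[1\otimes F,\hat{\pi}(f^1)]\cdots)$ directly on $L^2(\R) \htensor H$ and invoking the $U$-invariance of $1 \otimes D$ together with $\HC^*(\cptops^\infty_\R \ptensor \smooth{A}_\theta) \simeq \HC^*(\smooth{A}_\theta)$) is precisely the argument the paper's surrounding material supports.
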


\begin{rmk}
 As a consequence of Corollary \ref{cor:char-sp-triple-continu-crossed-prod}, or directly from the formula (\ref{eq:inv-cocycle-twisted-over-deform}), the cocycle $\ch_{(\smooth{A}, H, D)}^{(\theta)}$ over $\smooth{A}_\theta$ agrees with $\ch_{(\smooth{A}_\theta, H, D)}$.
\end{rmk}
 
\begin{thm}\label{thm:cl-deform-pairing-invar}
 Let $(\smooth{A}, H, D)$ be an even regular spectral triple endowed with a smooth action of $\T^2$ satisfying $\smooth{A} = \smooth{A}^\infty$. Then the Chern-Connes characters of $(\smooth{A}_\theta, H, D)$ and $(\smooth{A}, H, D)$ induce the same maps on $K_0(A_\theta)$ via the isomorphism $\Lambda_\theta$ of Corollary \ref{cor:k-group-isom}.
\end{thm}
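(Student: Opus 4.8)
The plan is to reduce the statement to Theorem \ref{prop:double-dual-cocycle-theta-deform} combined with the integrality of the index pairing. The character $\ch_D$ of $(\smooth{A}, H, D)$ is a $\sigma$-invariant cyclic cocycle, so the constructions of the previous section apply to it verbatim; by the remark following Corollary \ref{cor:char-sp-triple-continu-crossed-prod}, the Chern--Connes character $\ch_{(\smooth{A}_\theta, H, D)}$ of the deformed triple coincides, as a cyclic cocycle on $\smooth{A}_\theta$, with the cocycle $(\ch_D)^{(\theta)}$ obtained from $\ch_D$ by the double-dualization procedure. This cocycle-level identification, and not merely a statement in $\HP$, is what makes Theorem \ref{prop:double-dual-cocycle-theta-deform} directly applicable.

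First I would feed $\phi = \ch_D$ into Theorem \ref{prop:double-dual-cocycle-theta-deform}: under the natural isomorphism $\Phi_\theta\colon \HP^*(\smooth{A}) \to \HP^*(\smooth{A}_\theta)$ of Corollary \ref{prop:per-cyclic-isom}, the class of $(\ch_D)^{(\theta)}$ corresponds to that of $\ch_D + \theta\, i_{\delta^{(1)}} i_{\delta^{(2)}} \ch_D$, read in the $\Z/2$-graded group $\HP^0$ (here $\ch_D$ has degree $n$ and $i_{\delta^{(1)}} i_{\delta^{(2)}} \ch_D$ degree $n+2$, both even). Since $\Phi_\theta$ is compatible with the isomorphism $\Lambda_\theta$ of Corollary \ref{cor:k-group-isom} and the pairing is linear in the cocycle slot, for every $x \in K_0(A)$ one obtains
\[
\pairing{\ch_{(\smooth{A}_\theta, H, D)}}{\Lambda_\theta(x)} = \pairing{\ch_D}{x} + \theta\, \pairing{i_{\delta^{(1)}} i_{\delta^{(2)}} \ch_D}{x}.
\]
As $\Lambda_\theta$ is an isomorphism of $K$-groups, every element of $K_0(A_\theta)$ is of the form $\Lambda_\theta(x)$, so it suffices to show that the last term vanishes for all $x$.

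This is where the integrality enters. Since $(\smooth{A}, H, D)$ and $(\smooth{A}_\theta, H, D)$ are even $K$-cycles, passing to the bounded Fredholm module $F = D\absolute{D}^{-1}$ on $H$ (resp.\ on $\ell_2\Z \htensor H$, in the guise of Proposition \ref{prop:unitary-conj-crossed-tensor-cl}) shows that both $\pairing{\ch_{(\smooth{A}_\theta, H, D)}}{\Lambda_\theta(x)}$ and $\pairing{\ch_D}{x}$ compute Fredholm indices, hence lie in $\Z$, for every $\theta \in \R$ and every $x \in K_0(A)$. The displayed identity then forces $\theta\, \pairing{i_{\delta^{(1)}} i_{\delta^{(2)}} \ch_D}{x} \in \Z$ for all real $\theta$, and since $\theta$ ranges over all of $\R$ this is possible only if $\pairing{i_{\delta^{(1)}} i_{\delta^{(2)}} \ch_D}{x} = 0$. (Note that this is precisely the point at which the full character of an equivariant spectral triple behaves differently from an isolated invariant trace $\tau$, for which $i_{\delta^{(1)}} i_{\delta^{(2)}}\tau$ genuinely pairs nontrivially, producing the $\Z + \theta\Z$ phenomenon of the introduction.) Substituting back gives $\pairing{\ch_{(\smooth{A}_\theta, H, D)}}{\Lambda_\theta(x)} = \pairing{\ch_D}{x}$ for all $x$, i.e.\ the two characters induce the same homomorphism $K_0(A_\theta) \to \C$.

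The main difficulty I anticipate is not conceptual but a matter of getting the identifications straight: one must use the cocycle equality $\ch_{(\smooth{A}_\theta, H, D)} = (\ch_D)^{(\theta)}$ before passing to $\HP$, keep track of the degree shift so that the sum $\ch_D + \theta\, i_{\delta^{(1)}} i_{\delta^{(2)}} \ch_D$ is interpreted in $\HP^0$, and ensure the two index pairings are literally $\Z$-valued in the stated generality — which is the reason I would argue through the bounded Fredholm-module picture rather than through the finitely-summable cocycle formula. Once these points are pinned down, the argument is simply the observation that $\theta c \in \Z$ for every real $\theta$ forces $c = 0$.
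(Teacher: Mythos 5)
Your proposal is correct and follows essentially the same route as the paper: identify the deformed character with $\ch_D + \theta\, i_{\delta^{(1)}} i_{\delta^{(2)}} \ch_D$ under the natural $\HP$/$K$-theory identifications, then use integrality of the index pairing for all real $\theta$ to force the $\theta$-linear term to vanish. The only cosmetic difference is that the paper runs the integrality argument upstairs on the crossed product $\R \ltimes \R \ltimes \smooth{A}$ (pairing $\hat{\hat{\phi}} = i_{\delta^{\hat{(1)}}}\hat\phi + \theta\, i_{\delta^{(2)}}\hat\phi$ with $K_1(\R\ltimes A)$ via the spectral triple of \eqref{eq:dfn-spec-triple-twist-crossed-prod}), whereas you run it downstairs on $\smooth{A}$ and $\smooth{A}_\theta$ after invoking Theorem \ref{prop:double-dual-cocycle-theta-deform} — which is itself proved by exactly that crossed-product computation.
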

 
\begin{proof}
Put $\phi = \ch_D$.  By Corollary \ref{cor:char-sp-triple-continu-crossed-prod}, the character of $D$ over $\smooth{A}_\theta$ corresponds to the cocycle $\hat{\hat{\phi}}$ on $\R \ltimes_{\widehat{\sigma^{(1)}},\sigma^{(2)}_{\theta t}} \R \ltimes_{\sigma^{(1)}} \smooth{A}$.  As in the proof of Proposition \ref{prop:double-dual-cocycle-theta-deform}, this cocycle corresponds to the cocycle $i_{\delta^{\hat{(1)}}} \hat\phi + \theta i_{\delta^{(2)}} \hat \phi$ on $\R \ltimes \smooth{A}$.

Given an action $\alpha$ on a $C^*$-algebra $B$, let $\Phi_\alpha$ denote the Connes-Thom isomorphism \cite{MR605351} $K_*B \rightarrow K_{*+1}(\R \ltimes_\alpha B)$.  Let $y$ be any element of $K_1(\R \ltimes A)$.  Then one has
\[
\pairing{\hat{\hat \phi}}{\Phi_{\widehat{\sigma^{(1)}}\sigma^{(2)}_{\theta t}}^{-1}(y)} = \pairing{i_{\delta^{\hat{(1)}}} \hat\phi + \theta i_{\delta^{(2)}} \hat \phi}{y} = \pairing{i_{\delta^{\hat{(1)}}} \hat\phi}{y}  + \theta \pairing{ i_{\delta^{(2)}} \hat \phi}{y}.
\]
Since $\hat{\hat \phi}$ is equal to a character of a spectral triple, its pairing with any element in $K_0(\R \ltimes_{\widehat{\sigma^{(1)}},\sigma^{(2)}_{\theta t}} \R \ltimes_{\sigma^{(1)}} A)$ must be an integer.  Hence $ \pairing{i_{\delta^{\hat{(1)}}} \hat\phi}{y}  + \theta \pairing{ i_{\delta^{(2)}} \hat \phi}{y}$ must stay inside $\Z$ regardless of the value of $\theta$, which implies $ \pairing{ i_{\delta^{(2)}} \hat \phi}{y} = 0$.

Let $\Psi\colon K_0(A_\theta) \rightarrow K_0(\R \ltimes_{\widehat{\sigma^{(1)}},\sigma^{(2)}_{\theta t}} \R \ltimes_{\sigma^{(1)}} A)$ be the isomorphism given by Proposition \ref{prop:cl-deform-continu-cprod}.  For any element $x \in K_0(A_\theta)$, one has
\[
\pairing{\ch_D}{x} = \pairing{\hat{\hat \phi}}{\Psi(x)} = \pairing{i_{\delta^{\hat{(1)}}} \hat\phi}{\Phi_{\widehat{\sigma^{(1)}}\sigma^{(2)}_{\theta t}}\Psi(x)} = \pairing{\ch_D}{\Phi_{\sigma^{(1)}} \Phi_{\widehat{\sigma^{(1)}}\sigma^{(2)}_{\theta t}}\Psi(x)}.
\]
Since $x \mapsto \Phi_{\sigma^{(1)}} \Phi_{\widehat{\sigma^{(1)}}\sigma^{(2)}_{\theta t}}\Psi(x)$ is the natural isomorphism $\Lambda_\theta\colon K_0(A_\theta) \rightarrow K_0(A)$, one obtains the assertion of the theorem.
\end{proof}

\begin{rmk}
In the proof of Theorem \ref{thm:cl-deform-pairing-invar} we saw that the cyclic $(n+2)$-cocycle $i_{\delta^{(1)}} i_{\delta^{(2)}} \ch_D$ on $\smooth{A}$ pairs trivially with $K_0(A)$.  It is very likely that this cocycle gives the trivial class of $\HP^0(\smooth{A})$.  Note that this phenomenon is specific to the cases where one has a unitary representation $U_t$ on $H$ satisfying (\ref{eq:cov-rep-cl}).  Otherwise, $i_{\delta^{(1)}} i_{\delta^{(2)}} \ch_D$ can pair nontrivially with $K_0(A)$.  For example, the trace $\tau$ on $\T^2$ given by the Haar integral is a character of a $2$-summable even spectral triple over $C^\infty(\T^2)$ and one has $\pairing{i_{\delta^{(1)}} i_{\delta^{(2)}} \tau}{K^0 \T^2} = \Z$.
\end{rmk}

\begin{rmk}
 After a major portion of the results in this paper was obtained the author has learned from N. Higson that the invariance of the index pairing as stated in Theorem \ref{thm:cl-deform-pairing-invar} can be explained in a purely $C^*$-algebraic framework as follows: let $\R$ act on the $C^*$-algebra $\R \ltimes_{\sigma^{(1)}} A \mintensor C[0, 1]$ by $(\widehat{\sigma^{(1)}},\sigma^{(2)}_{\theta t})$ at the fiber of $\theta \in [0, 1]$.  Then the resulting crossed product algebra $\R \ltimes (\R \ltimes_{\sigma^{(1)}} A \mintensor C[0, 1])$ act on the Hilbert space $L^2(\R) \htensor H \htensor L^2([0, 1])$ determined by the action of $\R \ltimes_{\widehat{\sigma^{(1)}},\sigma^{(2)}_{\theta t}} \R \ltimes_{\sigma^{(1)}} A $ as described at the beginning of Section \ref{subsec:cc-char-deform-triple} and the pointwise multiplication representation of $C[0, 1]$ on $L^2[0, 1]$.  Then the $\R \ltimes (\R \ltimes_{\sigma^{(1)}} A \mintensor C[0, 1])$-$C[0, 1]$-module $L^2(\R) \htensor H \htensor L^2([0, 1])$ together with the unbounded $C[0, 1]$-endomorphism $1_{L^2(\R)} \otimes D \otimes 1_{L^2([0, 1])}$ define an element $\alpha$ of $\KK(\R \ltimes (\R \ltimes_{\sigma^{(1)}} A \mintensor C[0, 1], C[0, 1])$.  The evaluation at $\theta \in [0, 1]$ gives us $\KK$-equivalences
 \begin{equation*}
 \begin{split}
\ev_\theta&\colon \R \ltimes (\R \ltimes_{\sigma^{(1)}} A \mintensor C[0, 1] ) \simeq_{\KK} \R \ltimes_{\widehat{\sigma^{(1)}},\sigma^{(2)}_{\theta t}} \R \ltimes_{\sigma^{(1)}} A , \\
\ev_\theta&\colon  C[0, 1] \simeq_{\KK} \C.
\end{split}
\end{equation*}
 These $\KK$-equivalences intertwine $\alpha$ and the element of $K^0(A_\theta)$ given by $D$, while the composition $\ev_0 \circ \ev_\theta^{-1}$ gives the natural isomorphism between $K_0(A_\theta)$ and $K_0(A)$.
\end{rmk}

\paragraph{Acknowledgement} The author would like to thank Y. Kawahigashi for his continuing support.  He is also grateful to N. Higson, E. Blanchard, G. Skandalis, R. Tomatsu, N. Ozawa, R. Ponge, T. Natsume who gave many insightful suggestions during the various stages of research.  He also benefitted from conversations with C. Oikonomides, T. Fukaya, S. Oguni and many others. 

\bibliography{mybibliography}{}
\bibliographystyle{math}

\end{document}